\g@addto@macro{\@algocf@init}{\SetKwInOut{Note}{Note}} 
\newcommand{\indep}{\perp \!\!\! \perp}
\theoremstyle{plain}
\newcounter{example}
\newtheorem{theorem}{Theorem}
\newtheorem{lemma}{Lemma}[section]
\theoremstyle{definition}
\newtheorem{definition}{Definition}
\theoremstyle{remark}
\newtheorem{remark}{Remark}[section]
\newcommand{\eqd}{\stackrel{\textnormal{d}}{=}} 
\newcommand{\stkout}[1]{\ifmmode\text{\sout{\ensuremath{#1}}}\else\sout{#1}\fi}
\title{Distribution-Free Conditional Median Inference}
\author{
  Dhruv Medarametla\thanks{Department of Statistics, Stanford University}
  \and
  Emmanuel Candès\thanks{Department of Statistics and Mathematics, Stanford University}
}
\date{September 1, 2021}
\begin{document}

\maketitle

\begin{abstract}
We consider the problem of constructing confidence intervals for the
median of a response $Y \in \mathbb{R}$ conditional on features
$X \in \mathbb{R}^d$ in a situation where we are not willing to
make any assumption whatsoever on the underlying distribution of the
data $(X,Y)$.  We propose a method based upon ideas from conformal
prediction and establish a theoretical guarantee of coverage while
also going over particular distributions where its performance is
sharp. 
Additionally, we prove an equivalence between confidence intervals for the conditional median and confidence intervals for the response variable, resulting in a lower bound on the length of any
possible conditional median confidence interval. 
This lower bound is independent of sample size and holds for all distributions with no
point masses.
\end{abstract}

\section{Introduction}

Consider a dataset
$(X_1,Y_1),\ldots,(X_n,Y_n)\subseteq \mathbb{R}^d\times\mathbb{R}$ and
a test point $(X_{n+1},Y_{n+1})$, with all datapoints being drawn
i.i.d.~from the same distribution $P$. Given our training data, can we
provide a confidence interval for the expected value $\mu(X_{n+1}) = \mathbb{E}[Y_{n+1}|X_{n+1}]$?

Methods for inferring the conditional mean are
certainly not in short supply. 
In fact, most existing methods predict not just the conditional mean at a datapoint $\mathbb{E}[Y_{n+1}|X_{n+1}]$, but the full conditional mean function $\mathbb{E}[Y|X=x]$ for all $x\in\mathbb{R}^d$. 
To the best of our knowledge, however,
each approach relies on some assumptions in order to guarantee
coverage. For instance, the classical linear regression model is often used but
is only accurate if $Y|X$ is normal with mean
$\mu(x) = \mathbb{E}[Y|X=x]$ affine in $x$ and standard deviation
independent of $x$. Nonparametric regressions cannot estimate the
conditional mean without imposing smoothness conditions and assuming
that the conditional distribution is sufficiently light tailed.
Since reliable conditional mean inference is a very common problem,
these methods are nevertheless used all the time, e.g.~in predicting
disease survival times, classifying spam emails, pricing financial
assets, and more. The issue is that the assumptions these methods make
rarely hold in practice. Thus, the question remains:
is it possible to estimate the conditional mean at a datapoint in a
\textit{distribution-free} setting, with no assumptions on $P$?

It turns out that it is not only impossible to get a nontrivial
confidence interval for the conditional mean $\mathbb{E}[Y|X=X_{n+1}]$, but it
is actually impossible to get a confidence interval for
$\mathbb{E}[Y]$ itself. This result originates in
\cite{bahadur1956nonexistence}, where the authors show that any
parameter sensitive to tails of a distribution cannot be estimated
when no restrictions exist on the distribution class; an example of a
distribution with a non-estimable mean is given in Appendix
\ref{Conditional Mean Proof}.

Thus, within the distribution-free setting, making progress on
inferring the conditional mean requires a modification to the problem
statement. One strategy is to restrict the range of $Y$. An example of
this is in \cite{barber2020distribution}, which introduces an
algorithm that calculates a confidence interval for the conditional
mean in the case where $Y\in\{0,1\}$. However, even with this
restriction, Barber shows that there exists a fundamental bound
limiting how small any such confidence interval can be.

The other strategy is to modify the measure of central tendency that
we study. Bahadur and Savage's result suggests that the best
parameters to study are robust to distribution outliers; this
observation motivates our investigation of the conditional median.

In a nutshell, the conditional median is possible to infer because of
the strong and quantifiable relationship between any particular
sampled datapoint $(X_i,Y_i)$ and $\textrm{Median}(Y|X=X_i)$.
Its robustness
to outliers means that even within the distribution-free setting,
there is no need to worry about `hidden' parts of the distribution.
Additionally, there already exists a well-known algorithm for estimating
$\textrm{Median}(Y)$ given a finite number $n$ of
i.i.d.~samples. Explored in \cite{noether1972distribution} and covered
in Appendix \ref{Median Proof}, this algorithm produces intervals with guaranteed rates of coverage and
widths going to zero as the sample size goes to infinity, suggesting
that an algorithm for the conditional median might also perform well. We note that there already exists literature dealing with estimating the conditional quantile through quantile regression; \cite{koenker2005} provides an introduction to different methods. The difference between quantile regression and our work here is that quantile regression requires continuity conditions and only provides theoretical guarantees at the asymptotic level, as seen in \cite{belloni2019conditional}, \cite{takeuchi2006nonparametric}, and \cite{oberhofer2016asymptotic}; the methods described here provide coverage guarantees on finite sample sizes and require no assumptions. 

Our goal in this paper is to combine ideas from regular median
inference with procedures from distribution-free inference in order to
understand how well an algorithm can cover the conditional median and,
more generally, conditional quantiles. In particular, we want to see
if the properties of the median and quantiles lead to a valid
inference method while also examining the limits of this inference.

It is important to note that the quantity we are attempting to study is $\textrm{Median}(Y|X = X_{n+1})$, not $\textrm{Median}(Y|X = x)$. The first term is a random variable dependent on $X_{n+1}$, whereas the second is fixed and exists for all $x$ in the support of $P$. We focus on the first quantity because we are in the distribution-free setting; as we know nothing about the class of distributions that $P$ belongs to, it is more tractable to make inferences about datapoints as opposed to pointwise across the full distribution. 

Another way to frame this is to think of our goal as to cover the value of the conditional median function when weighting by the marginal distribution $P_X$, as opposed to covering the full conditional median function over all $x\in\mathbb{R}^d$.
Because we are predicting the conditional median at an unknown value $X_{n+1}$, our success is not measured by whether or not we predict the conditional median correctly at all possible $x\in\mathbb{R}^d$, but by how often we predict the conditional median correctly across $P_X$.

The methods used in this paper are similar to those from
distribution-free \textit{predictive inference}, which focuses on
predicting $Y_{n+1}$ from a finite training set. The field of
conformal predictive inference began with \cite{vovk2005algorithmic}
and was built up by works such as \cite{shafer2008tutorial} and
\cite{vovk2009line}; it has been generating interest recently due to
its versatility and lack of assumptions. Applications of conformal
predictive inference range from criminal justice to drug discovery, as
seen in \cite{Romano2020With} and \cite{cortes2019concepts}
respectively. 

While this paper relies on techniques from predictive inference, our
focus is on \textit{parameter inference}, which is quite different
from prediction because it focuses on predicting a function of the conditional distribution $Y_{n+1}|X_{n+1}$ as opposed to the datapoint $Y_{n+1}$. For example, using sample datapoints from an unknown normal distribution to estimate the true mean of the distribution would constitute parameter inference; using the estimated quantiles of the datapoints to create a predictive confidence interval for the next datapoint would constitute predictive inference. Whereas predictive inference exploits the fact that
$(X_{n+1},Y_{n+1})$ is exchangeable with the sample datapoints,
parameter inference requires another layer of analysis, as $(X_{n+1},\textrm{Median}(Y_{n+1}|X_{n+1}))$ is not exchangeable with $(X_i, Y_i)$. This additional complexity
demands modifying approaches from predictive inference to produce
valid {\em parameter} inference.

\subsection{Terminology}

We begin by setting up definitions to formalize the concepts above. Throughout this paper, we assume that any distribution $(X,Y)\sim P$ is over $\mathbb{R}^d\times\mathbb{R}$ unless explicitly stated otherwise. Each result in this paper holds true for all values of $d$ and $n$.

Given a feature vector $X_{n+1}$, we let
$\hat{C}_n(X_{n+1})\subseteq\mathbb{R}$ denote a confidence interval for
some functional of the conditional distribution
$Y_{n+1}|X_{n+1}$. Note that we use the phrase confidence interval
for convenience; in its most general form, $\hat{C}_n(X_{n+1})$ is a subset
of $\mathbb{R}$. This interval is a function of the point $X_{n+1}$ at which
we seek inference as well as of our training data
$\mathcal{D} = \{(X_1,Y_1),\ldots,(X_n,Y_n)\}$. We write $\hat{C}_n$
to refer to the general algorithm that maps $\mathcal{D}$ to the
resulting confidence intervals $\hat{C}_n(x)$ for each
$x\in\mathbb{R}^d$.

In order for $\hat{C}_n$ to be useful, we want it to \textit{capture},
or contain, the parameter we care about with high probability. We
formalize this as follows:
\begin{definition}
\label{Median Coverage Definition}
We say that $\hat{C}_n$ satisfies \textit{distribution-free \textbf{median} coverage} at level $1-\alpha$, denoted by $(1-\alpha)$-Median, if 
$$\mathbb{P}\big\{\textrm{Median}(Y_{n+1}|X_{n+1})\in\hat{C}_n(X_{n+1}) \big\}\geq 1-\alpha\textnormal{ for all distributions }P\textnormal{ on }(X,Y)\in\mathbb{R}^d\times\mathbb{R}.$$
\end{definition}
\begin{definition}
\label{Quantile Coverage Definition}
For $0<q<1$, let $\textrm{Quantile}_q(Y_{n+1}|X_{n+1})$ refer to the $q$th quantile of the conditional distribution $Y_{n+1}|X_{n+1}$.
We say that $\hat{C}_n$ satisfies \textit{distribution-free \textbf{quantile} coverage} for the $q$th quantile at level $1-\alpha$, denoted by $(1-\alpha,q)$-Quantile, if 
$$\mathbb{P}\{\textrm{Quantile}_q(Y_{n+1}|X_{n+1})\in\hat{C}_n(X_{n+1}) \}\geq 1-\alpha\textnormal{ for all distributions }P\textnormal{ on }(X,Y)\in\mathbb{R}^d\times\mathbb{R}.$$
\end{definition}

The probabilities in Definitions \ref{Median Coverage Definition} and \ref{Quantile Coverage Definition} are both taken over the training data $\mathcal{D} = \{(X_1,Y_1),\ldots,(X_n,Y_n)\}$ and test point $X_{n+1}$. Thus, satisfying $(1-\alpha)$-Median is equivalent to satisfying $(1-\alpha,0.5)$-Quantile.

These concepts are similar to predictive coverage, with the key difference being that our goal is now to predict a function of $X_{n+1}$ rather than a new datapoint $Y_{n+1}$.

\begin{definition}
\label{Predictive Coverage Definition}
We say that $\hat{C}_n$ satisfies \textit{distribution-free \textbf{predictive} coverage} at level $1-\alpha$, denoted by $(1-\alpha)$-Predictive, if 
$$\mathbb{P}\big\{Y_{n+1}\in\hat{C}_n(X_{n+1}) \big\}\geq 1-\alpha\textnormal{ for all distributions }P\textnormal{ on }(X,Y)\in\mathbb{R}^d\times\mathbb{R},$$
where this probability is taken over the training data $\mathcal{D} = \{(X_1,Y_1),\ldots,(X_n,Y_n)\}$ and test point $(X_{n+1},Y_{n+1})$.
\end{definition}

Finally, we define the type of conformity scores that we will use in our general quantile inference algorithm.

\begin{definition}
\label{Locally Nondecreasing Conformity Score Definition}
We say that a function $f:(\mathbb{R}^d,\mathbb{R})\to\mathbb{R}$ is a \textit{locally nondecreasing conformity score} if, for all $x\in\mathbb{R}^d$ and $y, y'\in\mathbb{R}$ with $y\leq y'$, we have $f(x,y)\leq f(x,y')$.

\end{definition}

\subsection{Summary of Results}

We find that there exists a distribution-free predictive inference algorithm that satisfies both $(1-\alpha/2)$-Predictive and $(1-\alpha)$-Median. Moreover, an improved version of this algorithm also satisfies $(1-\alpha,q)$-Quantile. Together, these prove that there exists nontrivial algorithms $\hat{C}_n$ that satisfy $(1-\alpha)$-Median and $(1-\alpha,q)$-Quantile for all $0<q<1$. 

We go on to show that conditional median inference and predictive inference are nearly equivalent problems.
Specifically, we show that any algorithm that contains $\textrm{Median}(Y_{n+1}|X_{n+1})$ with probability $1-\alpha$ must also contain $Y_{n+1}$ with probability at least $1-\alpha$, and that any algorithm that contains $Y_{n+1}$ with probability at least $1-\alpha/2$ must contain $\textrm{Median}(Y_{n+1}|X_{n+1})$ with probability $1-\alpha$.

Taken together, these results give us somewhat conflicting
perspectives. On the one hand, there exist distribution-free
algorithms that capture the conditional median and conditional
quantile with high likelihood; on the other hand, any such algorithm
will also capture a large proportion of the distribution itself,
putting a hard limit on how well such algorithms can ever perform.

\section{Confidence Intervals for the Conditional Median}
\label{Confidence Interval for the Conditional Median Section}

This section proves the existence of algorithms obeying
distribution-free median and quantile coverage.  We then focus on
situations where these algorithms are sharp.

\subsection{Basic Conditional Median Inference}
\label{Conditional Median Algorithm Section}

Algorithm \ref{Conditional Median Interval} below operates by taking
the training dataset and separating it into two halves of sizes
$n_1+n_2 = n$.  Next, a regression algorithm $\hat{\mu}$ is trained on
$\mathcal{D}_1 = \{(X_1,Y_1),\ldots,(X_{n_1},Y_{n_1})\}$. The
residuals $Y_i - \hat{\mu}(X_i)$ are calculated for $n_1<i\leq n$, and
the $1-\alpha/2$ quantile of the absolute value of these residuals is
then used to create a confidence band around the prediction
$\hat{\mu}(X_{n+1})$. The expert will recognize that this is identical
to a well-known algorithm from predictive inference as explained
later.

\begin{algorithm}[H]
\caption{Confidence Interval for $\textrm{Median}(Y_{n+1}|X_{n+1})$ with Coverage $1-\alpha$}
\label{Conditional Median Interval}
\SetKwInOut*{KwIn}{}
\SetKwInOut{KwProcess}{Process}
\SetKwInOut*{KwOut}{}
\KwIn{}
\begin{algorithmic}
\STATE Number of i.i.d. datapoints $n\in\mathbb{N}$.
\STATE Split sizes $n_1+n_2 = n$.
\STATE Datapoints $(X_1,Y_1),\ldots,(X_n,Y_n)\sim P\subseteq(\mathbb{R}^d,\mathbb{R})$.
\STATE Test point $X_{n+1}\sim P$.
\STATE Regression algorithm $\hat{\mu}$.
\STATE Coverage level $1-\alpha\in (0,1)$.
\end{algorithmic}

\KwProcess{}
\begin{algorithmic}
\STATE Randomly split $\{1,\ldots,n\}$ into disjoint $\mathcal{I}_1$ and $\mathcal{I}_2$ with $|\mathcal{I}_1| = n_1$ and $|\mathcal{I}_2| = n_2$.
\STATE Fit regression function $\hat{\mu}:\mathbb{R}^d\to \mathbb{R}$ on $\{(X_i,Y_i): i\in \mathcal{I}_1\}$.
\STATE For $i\in \mathcal{I}_2$ set $E_i = |Y_i - \hat{\mu}(X_i)|$.
\STATE Compute $Q_{1-\alpha/2}(E)$, the $(1-\alpha/2)(1+1/n_2)$-th empirical quantile of $\{E_i:i\in\mathcal{I}_2\}$.
\end{algorithmic}

\KwOut{}
\begin{algorithmic}
\STATE Confidence interval $\hat{C}_n(X_{n+1}) = [\hat{\mu}(X_{n+1})-Q_{1-\alpha/2}(E),\hat{\mu}(X_{n+1})+Q_{1-\alpha/2}(E)]$ for \\
$\textrm{Median}(Y_{n+1}|X_{n+1})$.
\end{algorithmic}
\end{algorithm}

\begin{theorem}
\label{Conditional Median Theorem}
For all distributions $P$, all regression algorithms $\hat{\mu}$, and all split sizes $n_1 + n_2 = n$, the output of Algorithm \ref{Conditional Median Interval} contains $\textrm{Median}(Y_{n+1}|X_{n+1})$ with probability at least $1-\alpha$. That is, the algorithm satisfies $(1-\alpha)$-Median. 
\end{theorem}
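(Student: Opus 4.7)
The plan is to prove the claim in two steps. First, I would show that Algorithm \ref{Conditional Median Interval} actually satisfies the stronger predictive guarantee $(1-\alpha/2)$-Predictive. Second, I would use the defining property of the median to convert this predictive guarantee into the desired $(1-\alpha)$-Median coverage, paying a factor-of-two loss.

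For the first step, I would recognize the algorithm as split conformal prediction with conformity score $(x,y)\mapsto|y-\hat{\mu}(x)|$. Conditional on the split $(\mathcal{I}_1,\mathcal{I}_2)$ and on the fitted regressor $\hat{\mu}$ (which depends only on $\mathcal{D}_1$), the i.i.d.\ assumption on the data makes the residuals $\{E_i:i\in\mathcal{I}_2\}\cup\{E_{n+1}\}$, with $E_{n+1}:=|Y_{n+1}-\hat{\mu}(X_{n+1})|$, exchangeable. The inflated empirical quantile $Q_{1-\alpha/2}(E)$ is exactly the classical split-conformal threshold, so that
$$\mathbb{P}\{E_{n+1}\leq Q_{1-\alpha/2}(E)\}\geq 1-\alpha/2,$$
which is precisely the event $Y_{n+1}\in\hat{C}_n(X_{n+1})$.

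For the second step, I would condition on $(\mathcal{D},X_{n+1})$; this freezes both the center $\hat{\mu}(X_{n+1})$ and the half-width $Q_{1-\alpha/2}(E)$, making $\hat{C}_n(X_{n+1})$ a deterministic interval symmetric around $\hat{\mu}(X_{n+1})$. Writing $m=\mathrm{Median}(Y_{n+1}\mid X_{n+1})$, if $m$ lies strictly above the upper endpoint of $\hat{C}_n(X_{n+1})$, then by the definition of the median at least half of the conditional mass of $Y_{n+1}$ lies at or above $m$, and hence strictly outside $\hat{C}_n(X_{n+1})$; the symmetric argument handles the case where $m$ is below the lower endpoint. So in either case,
$$\mathbb{P}\{Y_{n+1}\notin\hat{C}_n(X_{n+1})\mid\mathcal{D},X_{n+1}\}\;\geq\;\tfrac12\,\mathbbm{1}\{m\notin\hat{C}_n(X_{n+1})\}.$$
Taking expectations and invoking Step 1 yields $\mathbb{P}\{m\notin\hat{C}_n(X_{n+1})\}\leq 2\,\mathbb{P}\{Y_{n+1}\notin\hat{C}_n(X_{n+1})\}\leq \alpha$, which is $(1-\alpha)$-Median.

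The main obstacle is the conversion in Step 2: it hinges on the distinctive property that the median, unlike the mean, places at least half of the conditional mass on either side of itself, so that any deterministic interval missing the median is also missed by $Y_{n+1}$ with probability at least one-half. The factor of two is intrinsic to this argument, and it explains why the algorithm inflates the miscoverage level to $\alpha/2$ inside the quantile computation. The argument also relies on $\hat{C}_n(X_{n+1})$ being a single interval so that $m$ can only be missed from one side at a time; this is automatic for the symmetric conformal band produced by Algorithm \ref{Conditional Median Interval}.
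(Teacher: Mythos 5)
Your proposal is correct and mirrors the paper's own proof: establish $(1-\alpha/2)$-Predictive via the standard split-conformal exchangeability argument, then pay a factor of two using the fact that an interval missing the conditional median is missed by $Y_{n+1}$ with conditional probability at least $1/2$. The only difference is cosmetic — the paper cites Vovk for Step 1 and invokes its Theorem~\ref{All Predictive Algorithms} for Step 2, whereas you re-derive both inline (and your conditioning on $(\mathcal{D},X_{n+1})$ to freeze the interval is a clean way to make the half-mass bound rigorous).
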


The proof of Theorem \ref{Conditional Median Theorem} is covered in Appendix \ref{Conditional Median Proof}.

\begin{remark}
Algorithm \ref{Conditional Median Interval} works independently of how $\hat{\mu}$ is trained; this means that any regression function may be used, from simple linear regression to more complicated machine learning algorithms. It is important to note that $\hat{\mu}(x)$ does not need to be an estimate of the true conditional mean $\mu(x)$; while one option is to train it to predict the conditional mean, it can be fit to predict the conditional median, conditional quantile, or any other measure of central tendency. The best choice for what to fit $\hat{\mu}$ to may depend on one's underlying belief about the distribution.
\end{remark}

We return at last to the connection with predictive inference.
Introduced in \cite{papadopoulos2002inductive} and
\cite{vovk2005algorithmic} and studied in \cite{lei2013distribution},
\cite{foygel2021limits}, \cite{Romano2020With}, and several other
papers, the \textit{split conformal method} was initially created to
achieve distribution-free predictive coverage guarantees. In
particular, \cite{vovk2005algorithmic} shows that Algorithm
\ref{Conditional Median Interval} satisfies $(1-\alpha/2)$-Predictive,
implying that in order to capture $\textrm{Median}(Y_{n+1}|X_{n+1})$
with probability $1-\alpha$, our algorithm produces a wider confidence
interval than an algorithm trying to capture $Y_{n+1}$ with the same
probability.

\subsection{General Conditional Quantile Inference}
\label{Conditional Quantile Algorithm Section}

Algorithm \ref{Conditional Median Interval} is a good first step
towards a usable method for conditional median inference; however, it
may be too rudimentary to be used in practice. Algorithm
\ref{Conditional Quantile Interval} is a more general version of Algorithm \ref{Conditional Median Interval} that results in conditional quantile coverage and better empirical performance.
This provides a better understanding of how diverse parameter inference algorithms can be.

\begin{algorithm}[H]
\caption{Confidence Interval for $\textrm{Quantile}_q(Y_{n+1}|X_{n+1})$ with Coverage $1-\alpha$}
\label{Conditional Quantile Interval}
\SetKwInOut*{KwIn}{}
\SetKwInOut{KwProcess}{Process}
\SetKwInOut*{KwOut}{}
\SetKwInput{KwData}{Note}

\KwIn{}

\begin{algorithmic}
\STATE Number of i.i.d.~datapoints $n\in\mathbb{N}$.
\STATE Split sizes $n_1+n_2 = n$.
\STATE Datapoints $(X_1,Y_1),\ldots,(X_n,Y_n)\sim P\subseteq(\mathbb{R}^d,\mathbb{R})$.
\STATE Test point $X_{n+1}\sim P$.
\STATE Locally nondecreasing conformity score algorithms $f^{\textnormal{lo}}$ and $f^{\textnormal{hi}}$.
\STATE Quantile level $q\in (0,1)$.
\STATE Coverage level $1-\alpha\in (0,1)$.
\STATE Split probabilities $r+s=\alpha$.
\end{algorithmic}

\KwProcess{}
\begin{algorithmic}
\STATE Randomly split $\{1,\ldots,n\}$ into disjoint $\mathcal{I}_1$ and $\mathcal{I}_2$ with $|\mathcal{I}_1| = n_1$ and $|\mathcal{I}_2| = n_2$.
\STATE Fit conformity scores $f^{\textnormal{lo}},f^{\textnormal{hi}}:(\mathbb{R}^d,\mathbb{R})\to \mathbb{R}$ on $\{(X_i,Y_i): i\in \mathcal{I}_1\}$.
\STATE For $i\in \mathcal{I}_2$ set $E_i^{\textnormal{lo}} = f^{\textnormal{lo}}(X_i,Y_i))$ and $E_i^{\textnormal{hi}} = f^{\textnormal{hi}}(X_i,Y_i))$.
\STATE Compute $Q_{rq}^{\textrm{lo}}(E)$, the $rq(1+1/n_2)-1/n_2$ empirical quantile of $\{E_i^{\textnormal{lo}}:i\in\mathcal{I}_2\}$, and $Q_{1-s(1-q)}^{\textrm{hi}}(E)$, the $(1-s(1-q))(1+1/n_2)$ empirical quantile of $\{E_i^{\textnormal{hi}}:i\in\mathcal{I}_2\}$.
\end{algorithmic}

\KwOut{}
\begin{algorithmic}
\STATE Confidence interval $\hat{C}_n(X_{n+1}) = \{y:Q_{rq}^{\textrm{lo}}(E)\leq f^{\textnormal{lo}}(X_{n+1},y),f^{\textnormal{hi}}(X_{n+1},y)\leq Q_{1-s(1-q)}^{\textrm{hi}}(E)\}$ for $\textrm{Quantile}_q(Y_{n+1}|X_{n+1})$.
\end{algorithmic}
\end{algorithm}

Algorithm \ref{Conditional Quantile Interval} differs from Algorithm \ref{Conditional Median Interval} in two ways. 
First, we use the $rq$ quantile of the lower scores to create the confidence interval's lower bound, and the $1-s(1-q)$ quantile of the upper scores (corresponding to the top $s(1-q)$ of the score distribution) for the upper bound. Second, the functions we fit are no longer regression functions, but instead locally nondecreasing conformity scores. These scores are described in Definition \ref{Locally Nondecreasing Conformity Score Definition}; see Remark \ref{Conformity Score Examples} for examples.

\begin{theorem}
\label{Conditional Quantile Theorem}
For all distributions $P$, all locally nondecreasing conformity scores $f^{\textnormal{lo}}$ and $f^{\textnormal{hi}}$, all split sizes $n_1 + n_2 = n$, and all $0<q<1$, the output of Algorithm \ref{Conditional Quantile Interval} contains $\textrm{Quantile}_q(Y_{n+1}|X_{n+1})$ with probability at least $1-\alpha$. That is, Algorithm \ref{Conditional Quantile Interval} satisfies $(1-\alpha,q)$-Quantile. 
\end{theorem}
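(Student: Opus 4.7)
}
Write $M_{n+1} := \text{Quantile}_q(Y_{n+1}\mid X_{n+1})$ and note that $\hat C_n(X_{n+1})$ fails to cover $M_{n+1}$ precisely when one of the two events
\begin{align*}
L &= \bigl\{\, f^{\text{lo}}(X_{n+1}, M_{n+1}) < Q_{rq}^{\text{lo}}(E) \,\bigr\}, \\
U &= \bigl\{\, f^{\text{hi}}(X_{n+1}, M_{n+1}) > Q_{1-s(1-q)}^{\text{hi}}(E) \,\bigr\}
\end{align*}
occurs. Since $r+s=\alpha$, by a union bound it suffices to prove $\mathbb{P}(L)\leq r$ and $\mathbb{P}(U)\leq s$. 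I will do $L$ in detail; $U$ is symmetric.

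The key is to use the local monotonicity of $f^{\text{lo}}$ to turn a statement about the deterministic quantity $M_{n+1}$ into one about $Y_{n+1}$ itself, which is what exchangeability actually controls. Observe that $L$ is measurable with respect to $(\mathcal D, X_{n+1})$ alone; moreover, whenever $L$ holds and additionally $Y_{n+1}\leq M_{n+1}$, the definition of ``locally nondecreasing'' gives
$$
f^{\text{lo}}(X_{n+1}, Y_{n+1}) \;\leq\; f^{\text{lo}}(X_{n+1}, M_{n+1}) \;<\; Q_{rq}^{\text{lo}}(E).
$$
Now condition on $(\mathcal D, X_{n+1})$: the random variable $Y_{n+1}$ has conditional law $P(Y\mid X=X_{n+1})$, and by the definition of the $q$-quantile, $\mathbb{P}(Y_{n+1}\leq M_{n+1}\mid \mathcal D, X_{n+1})\geq q$. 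Taking expectations over $(\mathcal D, X_{n+1})$ and using the inclusion above,
$$
\mathbb{P}\bigl(f^{\text{lo}}(X_{n+1},Y_{n+1}) < Q_{rq}^{\text{lo}}(E)\bigr) \;\geq\; \mathbb{P}\bigl(L \cap \{Y_{n+1}\leq M_{n+1}\}\bigr) \;\geq\; q\,\mathbb{P}(L).
$$

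It remains to bound the left-hand side by $rq$ using the standard conformal quantile inequality. Condition on the random split $(\mathcal I_1,\mathcal I_2)$ and on $\mathcal D_1$: the scoring map $f^{\text{lo}}$ is then frozen, and the $n_2+1$ random variables $\{f^{\text{lo}}(X_i,Y_i): i\in\mathcal I_2\}\cup\{f^{\text{lo}}(X_{n+1},Y_{n+1})\}$ are exchangeable (in fact i.i.d.). With the threshold set at the $\bigl(rq(1+1/n_2)-1/n_2\bigr)$-th empirical quantile, the rank argument for exchangeable samples yields
$$
\mathbb{P}\bigl(f^{\text{lo}}(X_{n+1},Y_{n+1}) < Q_{rq}^{\text{lo}}(E)\bigr) \;\leq\; rq;
$$
averaging over the conditioning preserves this bound. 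Combining the two displays gives $\mathbb{P}(L)\leq r$. For $U$, run the mirrored argument: if $U$ holds and $Y_{n+1}\geq M_{n+1}$ (which has conditional probability $\geq 1-q$), monotonicity of $f^{\text{hi}}$ forces $f^{\text{hi}}(X_{n+1},Y_{n+1})>Q_{1-s(1-q)}^{\text{hi}}(E)$, an event of probability at most $s(1-q)$ by the conformal bound with coverage parameter $1-s(1-q)$, so $\mathbb{P}(U)\leq s$.

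The main obstacle I expect is bookkeeping rather than ideas: one has to be careful that the failure event $L$ depends only on quantities independent of $Y_{n+1}$ given $X_{n+1}$, so that the conditional-quantile inequality $\mathbb{P}(Y_{n+1}\leq M_{n+1}\mid\mathcal D,X_{n+1})\geq q$ can be legitimately pulled inside the expectation, and to verify that the precise empirical-quantile index $rq(1+1/n_2)-1/n_2$ is exactly what is needed to get $rq$ (rather than something like $rq+1/(n_2+1)$) on the right-hand side of the conformal bound; a standard rank/order-statistic computation with no ties, plus a short argument to handle ties, should close this.
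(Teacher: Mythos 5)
Your proposal is correct and is essentially the paper's main argument reorganized: you combine the split-conformal rank bound $\mathbb{P}\{f^{\textnormal{lo}}(X_{n+1},Y_{n+1})<Q_{rq}^{\textnormal{lo}}(E)\}\le rq$ (resp.\ $\le s(1-q)$ on the upper side) with the conditional-quantile coupling that, given $(\mathcal D,X_{n+1})$, a $q$-fraction (resp.\ $(1-q)$-fraction) of $Y_{n+1}$'s conditional mass lies on the failing side, which is precisely the content of Remark~\ref{All Predictive Algorithms Remark} proved in Appendix~\ref{All Predictive Intervals Extension Proof}. The only difference is presentational: the paper first establishes the two one-sided predictive bounds and then invokes the extension of Theorem~\ref{All Predictive Algorithms} as a black box, whereas you inline the coupling directly in score space; the empirical-quantile index bookkeeping you flag does resolve exactly as in the computation in Appendix~\ref{Conditional Quantile Proof}.
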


The proof of Theorem \ref{Conditional Quantile Theorem} is covered in
Appendix \ref{Conditional Quantile Proof} and is similar to that of
Theorem \ref{Conditional Median Theorem}; the main modifications come
from the changes described above. Regarding the first change, the
asymmetrical quantiles on the lower and upper end of the $E_i$'s
balance the fact that datapoints have asymmetrical probabilities of
being on either side of the conditional quantile. Regarding the second
change, because the conformity scores $E_i$ still preserve relative
ordering, they do not affect the relationship between datapoints and
the conditional quantile.

\begin{remark}
  One possible choice for $r$ and $s$ is $r=s=\alpha/2$. This is
  motivated by the logic that $r$ and $s$ decide the probabilities of
  failure on the lower bound and the upper bound, respectively; if we
  want the bound to be equally accurate on both ends, it makes sense
  to set $r$ and $s$ equal. Another choice is $r=(1-q)\alpha$ and
  $s=q\alpha$; this results in the quantiles for
  $Q_{rq}^{\textrm{lo}}$ and $Q_{1-s(1-q)}^{\textrm{hi}}$ being
  approximately equal, with the algorithm taking the $q(1-q)\alpha$
  quantile of the scores on both the lower and upper ends. 
\end{remark}

\begin{remark}
\label{Conformity Score Examples}
The versatility of the conformity scores $f^{\textnormal{lo}}$ and $f^{\textnormal{hi}}$ is what differentiates Algorithm \ref{Conditional Quantile Interval} from Algorithm \ref{Conditional Median Interval} and makes it a viable option for conditional quantile inference. Below are a few examples of possible scores and the style of intervals they produce.

\begin{itemize}
    \itemsep-0.2em 
  \item[-]
    $f^{\textnormal{lo}}(X_i,Y_i) = f^{\textnormal{hi}}(X_i,Y_i) = Y_i
    - \hat{\mu}(X_i)$,
    where $\hat{\mu}:\mathbb{R}^d\to\mathbb{R}$ is trained on
    $\{(X_i,Y_i): i\in \mathcal{I}_1\}$ as a central tendency
    estimator. This is the conformity score used in Algorithm
    \ref{Conditional Median Interval} and \cite{vovk2005algorithmic},
    resulting in a confidence interval of the form
    $[\hat{\mu}(X_{n+1})+c_{\textnormal{lo}},\hat{\mu}(X_{n+1})+c_{\textnormal{hi}}]$
    for some
    $c_{\textnormal{lo}},c_{\textnormal{hi}}\in\mathbb{R}$. This score
    is best when the conditional distribution $Y|X=x$ is similar for
    all $x$ and either the mean or median can be estimated with
    reasonable accuracy. Note that if the conditional distribution
    $Y - \mathbb{E}[Y|X]$ is independent of $X$, Algorithm
    \ref{Conditional Quantile Interval} will output the same
    confidence interval for $\hat{\mu}(x) = \mathbb{E}[Y|X=x]$ and
    $\hat{\mu}(x) = \textrm{Median}(Y|X=x)$.
  \item [-]
    $f^{\textnormal{lo}}(X_i,Y_i) = f^{\textnormal{hi}}(X_i,Y_i) =
    \frac{Y_i - \hat{\mu}(X_i)}{\hat{\sigma}(X_i)}$, where
    $\hat{\mu}:\mathbb{R}^d\to\mathbb{R}$ and
    $\hat{\sigma}:\mathbb{R}^d\to\mathbb{R}^+$ are trained on
    $\{(X_i,Y_i): i\in \mathcal{I}_1\}$ as a central tendency
    estimator and conditional absolute deviation estimator,
    respectively. This score results in a confidence interval of the
    form
    $[\hat{\mu}(X_{n+1})+c_{\textnormal{lo}}\hat{\sigma}(X_{n+1}),\hat{\mu}(X_{n+1})+c_{\textnormal{hi}}\hat{\sigma}(X_{n+1})]$
    for some
    $c_{\textnormal{lo}},c_{\textnormal{hi}}\in\mathbb{R}$. Unlike the
    previous example, this score no longer results in a fixed-length
    confidence interval; it is best used when there is high
    heteroskedasticity in the underlying distribution. This is the
    conformity scores used to create adaptive predictive intervals in
    \cite{lei2018distribution}. Note that a normalization constant
    $\gamma>0$ can be added to the denominator $\hat{\sigma}(X_i)$ to
    create stable confidence intervals.
  \item[-]
    $f^{\textnormal{lo}}(X_i,Y_i) = Y_i -
    \hat{Q}^{\textnormal{lo}}(X_i)$ and
    $f^{\textnormal{hi}}(X_i,Y_i) = Y_i -
    \hat{Q}^{\textnormal{hi}}(X_i)$, where
    $\hat{Q}^{\textnormal{lo}},\hat{Q}^{\textnormal{hi}}:\mathbb{R}^d\to\mathbb{R}$
    are trained on $\{(X_i,Y_i): i\in \mathcal{I}_1\}$ to estimate the $rq$ quantile and the $1-s(1-q)$ quantile of the conditional distribution, respectively. This choice
    results in a confidence interval of the form
    $[\hat{Q}^{\textnormal{lo}}(X_{n+1})+c_{\textnormal{lo}},\hat{Q}^{\textnormal{hi}}(X_{n+1})+c_{\textnormal{hi}}]$
    for some
    $c_{\textnormal{lo}},c_{\textnormal{hi}}\in\mathbb{R}$. These 
    scores are best when one can estimate the conditional quantiles
    reasonably well and the conditional distribution $Y|X=x$ is
    heteroskedastic. Note that if $\hat{Q}^{\textnormal{lo}}$ and $\hat{Q}^{\textnormal{hi}}$ are trained well, then the resulting confidence interval will be approximately $[\hat{Q}^{\textnormal{lo}}(X_{n+1}),\hat{Q}^{\textnormal{hi}}(X_{n+1})]$. These are the scores used to create the
    predictive intervals seen in \cite{romano2019conformalized} and
    \cite{sesia2020comparison}.
  \item [-]
    $f^{\textnormal{lo}}(X_i,Y_i) = f^{\textnormal{hi}}(X_i,Y_i) =
    \hat{F}_{Y|X=X_i}(Y_i)$, where
    $\hat{F}_{Y|X=x}:\mathbb{R}^d\times\mathbb{R}\to[0,1]$ is trained
    on $\{(X_i,Y_i): i\in \mathcal{I}_1\}$ to be the estimated
    cumulative distribution function of the conditional distribution
    $Y|X$. Using this score will result in a confidence interval
    $[\hat{F}_{Y|X=X_{n+1}}^{-1}(c_{\textnormal{lo}}),\hat{F}_{Y|X=X_{n+1}}^{-1}(c_{\textnormal{hi}})]$
    for some $c_{\textnormal{lo}},c_{\textnormal{hi}}\in[0,1]$,
    similar to the predictive intervals in
    \cite{chernozhukov2019distributional} and
    \cite{kivaranovic2020adaptive}. This can be a good approach when
    the conditional distribution $Y|X$ is particularly complex.
    \item[-] $f^{\textnormal{lo}}(X_i,Y_i) =
      f^{\textnormal{hi}}(X_i,Y_i) = \log Y_i - \hat{\mu}(X_i)$, where
      $\hat{\mu}:\mathbb{R}^d\to\mathbb{R}$ is trained on
      $\{(X_i,Y_i): i\in \mathcal{I}_1\}$ as a log central tendency estimator. This results in a confidence interval of the form $[c_{\textnormal{lo}}\exp(\hat{\mu}(X_{n+1})),$ $c_{\textnormal{hi}}\exp(\hat{\mu}(X_{n+1}))]$ for some $c_{\textnormal{lo}},c_{\textnormal{hi}}\in\mathbb{R}^{+}$. This score works well when $Y$ is known to be positive and one wants to minimize the approximation ratio; it is equivalent to taking a $\log$ transformation of the data. 
\end{itemize}

In general, a good choice for $f^{\textnormal{lo}}(X_i,Y_i)$ and $f^{\textnormal{hi}}(X_i,Y_i)$ depends on one's underlying belief about the distribution as well as on the sample size $n$, though some scores perform better in practice. The choice of $n_1$ and $n_2$ represents a balance between model training and interval tightness; increasing $n_1$ increases the amount of data for $f^{\textnormal{lo}}$ and $f^{\textnormal{hi}}$, and increasing $n_2$ results in a better quantile for the predictive interval.  \cite{sesia2020comparison} contains more information on the effect of the conformity score on the size of predictive intervals as well as the impact of the ratio $n_1/n$ on interval width and coverage. We also simulate the impact of different scores on conditional quantile intervals in Section \ref{Simulations Section}.
\end{remark}

\begin{remark}
\label{Cross-Conformal Generalization}
Algorithm \ref{Conditional Quantile Interval} can be generalized to use more than one split, resulting in multiple confidence intervals that can then be combined into one output. For more information, \cite{vovk2018cross} describes how to apply $k$-fold cross validation to split conformal inference, and \cite{barber2021predictive} describes the special case of having $n$ different splits using the jackknife$+$ procedure. 
\end{remark}

\subsection{Algorithm Sharpness}
\label{Algorithm Sharpness Section}

Now that we have seen that Algorithms \ref{Conditional Median
  Interval} and \ref{Conditional Quantile Interval} achieve coverage,
an important question to ask is whether or not the terms for the error
quantile can be improved. Do our methods consistently overcover the
conditional median, and if so, is it possible to take a lower quantile
of the error terms and still have Theorems \ref{Conditional Median
  Theorem} and \ref{Conditional Quantile Theorem} hold?  In this
section, we prove that this is impossible by going over a particular
distribution $P^{\delta}$ for which the $1-\alpha/2$ term in Algorithm
\ref{Conditional Median Interval} is necessary. Additionally, we go
over a choice $\hat{\mu}_c$ with the property that Algorithm
\ref{Conditional Median Interval} \textit{always} results in
$1-\alpha/2$ coverage {when ran with input $\hat{\mu}_c$; this implies
  that there does not exist a distribution with the property that
  Algorithm \ref{Conditional Median Interval} will always provide a
  sharp confidence interval for the conditional median regardless of
  the regression algorithm.}

For each $\delta>0$, consider $(X,Y)\sim P^{\delta}$ over
$\mathbb{R}\times\mathbb{R}$, where
$P^{\delta}_X = \textrm{Unif}[-0.5,0.5]$ and $Y|X \eqd X \, B$;
$B \in \{0,1\}$ is here an independent Bernoulli variable with
$\mathbb{P}(B = 1) = 0.5+\delta$.  That is, $0.5+\delta$ of the distribution
is on the line segment $Y=X$ from $(-0.5,-0.5)$ to $(0.5,0.5)$, and
$0.5-\delta$ of the distribution is on the line segment $Y=0$ from
$(-0.5,0)$ to $(0.5,0)$. Thus, $\textrm{Median}(Y|X=x)=x$. A
visualization of $P^{\delta}$ is shown in Figure \ref{P^delta Figure}.

\begin{figure}[H]
\includegraphics[width=0.6\textwidth]{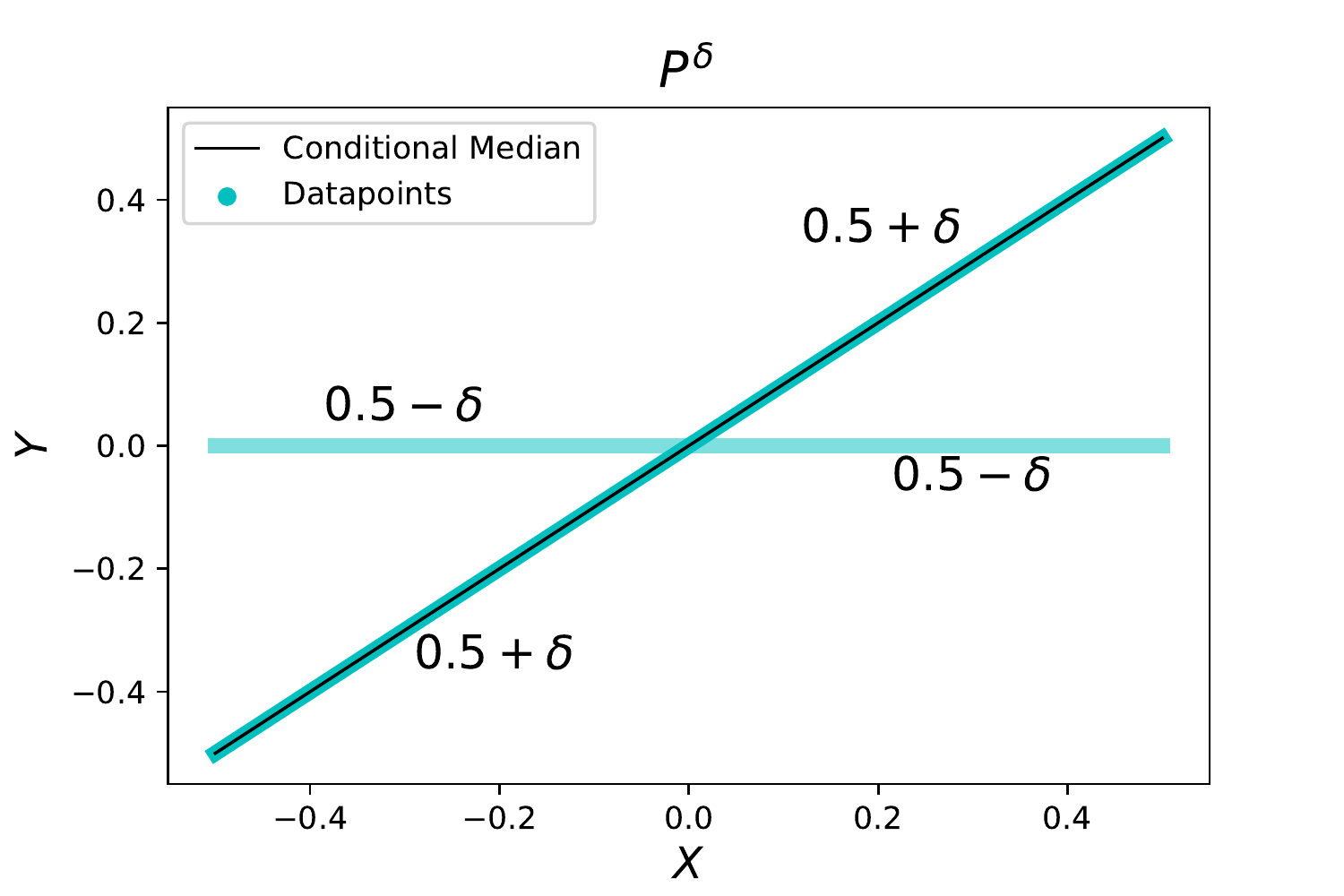}
\caption{A distribution for which it is difficult to estimate the
  conditional median. }
\label{P^delta Figure}
\end{figure}

We know that Algorithm \ref{Conditional Median Interval} is accurate
for all distributions $P$ and all algorithms $\hat{\mu}$. Consider the
regression algorithm $\hat{\mu}:\mathbb{R}\to\mathbb{R}$ such that
$\hat{\mu}(x)=0$ for all $x\in\mathbb{R}$; in other words, $\hat{\mu}$
predicts $Y_i=0$ for all $X_i$. We show that Algorithm
\ref{Conditional Median Interval} returns a coverage almost exactly
equal to $1-\alpha$. 

\begin{theorem}
\label{Sharpness Case Theorem}
For all $\epsilon>0$, there exist $N$ and $\delta>0$ such that if we
sample $n>N$ datapoints from the distribution $P^{\delta}$ and use
Algorithm \ref{Conditional Median Interval} with $\hat{\mu} = 0$ as
defined above and $n_1 = n_2 = n/2$ to get a confidence interval for
the conditional median,
$$\mathbb{P}\big\{\textnormal{Median}(Y_{n+1}|X_{n+1})\in\hat{C}_n(X_{n+1}) \big\}\leq 1-\alpha + \epsilon.$$
\end{theorem}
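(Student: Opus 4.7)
The plan is to show directly that the coverage probability under $P^{\delta}$ with $\hat{\mu} \equiv 0$ converges to $1 - \alpha/(1+2\delta)$ as $n \to \infty$, and then pick $\delta$ small enough that this limit is at most $1 - \alpha + \epsilon$.

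First I would identify the target and reduce the coverage event. Under $P^{\delta}$, conditional on $X=x$, $Y$ places mass $0.5+\delta$ at $x$ and mass $0.5-\delta$ at $0$, so $\textrm{Median}(Y\mid X=x)=x$ for every $x$ (the value $x$ is attained by strictly more than half the mass when $\delta>0$). Since $\hat{\mu} \equiv 0$, the interval produced by Algorithm \ref{Conditional Median Interval} is the symmetric band $[-Q,Q]$ with $Q:=Q_{1-\alpha/2}(E)$, so the coverage event becomes $\{|X_{n+1}| \leq Q\}$. Because $|X_{n+1}|$ is uniform on $[0,1/2]$ and is independent of the training data, the coverage probability equals $\mathbb{E}[\min(2Q,1)]$.

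Next I would compute the limiting value of $Q$. The residuals satisfy $E_i = |Y_i| = |X_i|\,B_i$, which is a two-component mixture: with probability $0.5-\delta$ it is the point mass at $0$, and with probability $0.5+\delta$ it is uniform on $[0,1/2]$. The population CDF is therefore $F_E(e) = (0.5-\delta) + 2(0.5+\delta)e$ for $e \in [0,1/2]$, which is strictly increasing and continuous on $(0,1/2]$. Solving $F_E(q^{\ast}) = 1-\alpha/2$ gives
\begin{equation*}
q^{\ast} \;=\; \frac{1+2\delta-\alpha}{2(1+2\delta)}, \qquad 2q^{\ast} \;=\; 1 - \frac{\alpha}{1+2\delta}.
\end{equation*}
Since $n_2 = n/2 \to \infty$, the level $(1-\alpha/2)(1+1/n_2)$ used by the algorithm converges to $1-\alpha/2$, which lies strictly above $F_E(0) = 0.5-\delta$, so it falls in the continuous regime of $F_E$. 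Standard convergence of empirical quantiles at points where $F_E$ is continuous and strictly increasing gives $Q \xrightarrow{P} q^{\ast}$; because $Q \in [0,1/2]$ is bounded, bounded convergence yields $\mathbb{E}[\min(2Q,1)] \to 2q^{\ast} = 1 - \alpha/(1+2\delta)$.

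Finally I would close the argument by choosing parameters. Given $\epsilon>0$, pick $\delta>0$ small enough that $1 - \alpha/(1+2\delta) \leq 1 - \alpha + \epsilon/2$ (an elementary inequality in $\delta$), and then pick $N$ large enough that for $n>N$ the expectation $\mathbb{E}[\min(2Q,1)]$ lies within $\epsilon/2$ of its limit. Combining the two bounds gives the claim. I expect the main obstacle to be the quantitative convergence of the empirical quantile to $q^{\ast}$ uniformly over a reasonable range of $\delta$; this can be handled cleanly by invoking the DKW inequality to control the empirical CDF, inverting it through the strict monotonicity of $F_E$ on $(0,1/2]$ (where the slope $2(0.5+\delta) \geq 1$ is bounded away from zero), and using this Lipschitz-type bound on $F_E^{-1}$ to translate a $\mathrm{CDF}$-level error into an error on $Q$, which then feeds into $\mathbb{E}[\min(2Q,1)]$.
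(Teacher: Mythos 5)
Your proof is correct and takes a genuinely different route from the paper's. The paper's argument is fully combinatorial and finite-sample: it conditions on $M_R=\#\{i\in\mathcal{I}_2:|X_i|\geq |X_{n+1}|\}$, which is exactly uniform on $\{0,\ldots,n_2\}$, observes that $M_E\mid M_R\sim\textrm{Binom}(M_R,0.5+\delta)$, rewrites $\mathbb{P}\{M_E\geq m\}$ by evaluating the generating function $\sum_t\binom{t}{k}z^t$, and then explicitly bounds the remainder terms to extract the $1-\alpha+O(\delta)+o(1)$ structure. You instead exploit the structure of $P^\delta$ at the population level: because $|X_{n+1}|\sim\textrm{Unif}[0,1/2]$ is independent of $\mathcal{D}$ and $Q\in[0,1/2]$, coverage reduces cleanly to $\mathbb{E}[2Q]$; the residual distribution has an explicit CDF with a jump at $0$ and slope $1+2\delta$ on $(0,1/2]$; and standard empirical-quantile convergence (DKW plus Lipschitz inversion of $F_E$ on the continuous branch) gives $Q\to q^{\ast}$ in probability, hence $\mathbb{E}[2Q]\to 1-\alpha/(1+2\delta)$ by bounded convergence. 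Your order of quantifiers is handled correctly: fix $\delta$ first so the limit is within $\epsilon/2$ of $1-\alpha$, then take $N$ for that fixed $\delta$, so no uniformity in $\delta$ is actually required. What your route buys is a transparent identification of the exact asymptotic coverage $1-\alpha/(1+2\delta)$ and a shorter, more conceptual argument; what the paper's route buys is an explicit nonasymptotic remainder bound that quantifies how large $N$ must be, which is more work but yields a concrete rate.
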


The proof is in Appendix \ref{Sharpness Case Proof}. Theorem
\ref{Sharpness Case Theorem} does not directly prove that the
$1-\alpha/2$ term in Algorithm \ref{Conditional Median Interval} is
sharp. However, we can see that if Algorithm \ref{Conditional Median
  Interval} used the $1-\alpha'/2$ quantile of the residuals with
$\alpha'>\alpha$, then by Theorem \ref{Sharpness Case Theorem} there
would exist a choice for $\delta$ and $n$ where the probability of
conditional median coverage would be less than $1-\alpha$. Therefore,
the $1-\alpha/2$ term is required for the probability of coverage to
always be at least $1-\alpha$.

\begin{remark}
  It is possible to generalize Theorem \ref{Sharpness Case Theorem} to
  Algorithm \ref{Conditional Quantile Interval} as well; we can change
  $P^{\delta}$ to have $Y|X \sim X \, B$ with
  $B \sim \textrm{Bernoulli}(q+\textbf{1}[X\geq
  0](1-2q)+\delta)$.
  This results in $\textrm{Quantile}_q(Y|X=x)=x$ for all
  $x\in[-0.5,0.5]$. Then, if we consider the conformity scores
  $f^{\textnormal{lo}}(X_i,Y_i) = f^{\textnormal{hi}}(X_i,Y_i) = Y_i$,
  it can be shown for large $n$ and small $\delta$ that Algorithm
  \ref{Conditional Quantile Interval} returns a confidence interval
  that has a conditional quantile coverage of at most
  $1-\alpha + \epsilon$, meaning that the $rq$ and $1-s(1-q)$ terms in
  the quantiles for the error scores are sharp.
\end{remark}

These results may seem somewhat pedantic because we are restricting
$\hat{\mu}(x)$ to be the zero function and
$f^{\textnormal{lo}}(X_i,Y_i)$ and $f^{\textnormal{hi}}(X_i,Y_i)$ to
be $Y_i$; this simplification is done to better illustrate our
point. Even when $f^{\textnormal{lo}}(X_i,Y_i)$ and
$f^{\textnormal{hi}}(X_i,Y_i)$ are trained using more complicated
approaches, there still exist distributions that result in only
$1-\alpha$ coverage for Algorithm \ref{Conditional Quantile
  Interval}. For an example of a distribution where Algorithm
\ref{Conditional Quantile Interval} only achieves $1-\alpha$ coverage
for standard conformity scores $f^{\textnormal{lo}}(X_i,Y_i)$ and
$f^{\textnormal{hi}}(X_i,Y_i)$, refer to $P_3$ in Section
\ref{Simulations Section}. The existence of $P^{\delta}$ and similarly
`confusing' distributions helps to show why capturing the conditional
median can be tricky in a distribution-free setting.
  
At the same time, there exist conformity scores for which Algorithms
\ref{Conditional Median Interval} and \ref{Conditional Quantile
  Interval} have rates of coverage that are always near
$1-\alpha/2$. For $c > 0$, define the randomized regression function
$\hat{\mu}_c$ as follows: set
$M = \displaystyle \max_{i\in\mathcal{I}_1} |Y_i|$ and
$\hat{\mu}_c(x) = A_x$ for all $x\in\mathbb{R}^d$, where
$A_x\overset{i.i.d.}{\sim} \mathcal{N}(0,(cM)^2)$. We prove the
following:

\begin{theorem}
\label{Crazy Regression Coverage Theorem}
For all $\epsilon > 0$, there exists $c$ and $N$ such that for
  all $n>N$, there is a split $n_1+n_2=n$ such that when Algorithm
\ref{Conditional Median Interval} is ran using the regression function
$\hat{\mu}_c$ on $n$ datapoints with $\mathcal{I}_1$ of size $n_1$ and
$\mathcal{I}_2$ of size $n_2$, the resulting interval will be finite
and will satisfy
$$\mathbb{P}\big\{\textnormal{Median}(Y_{n+1}|X_{n+1})\in\hat{C}_n(X_{n+1}) \big\}\geq 1-\alpha/2 - \epsilon$$
for \textbf{any} distribution $P$.
\end{theorem}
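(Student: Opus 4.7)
The key idea is that when $c$ is large, the Gaussian noise $A_x\sim\mathcal{N}(0,(cM)^2)$ drowns out the actual response values: the residuals $E_i=|Y_i-A_{X_i}|$ for $i\in\mathcal{I}_2$ and the (unobserved) target $E_{n+1}^\star=|m(X_{n+1})-A_{X_{n+1}}|$ (where $m(x):=\textrm{Median}(Y\mid X=x)$) all lie within $M$ of the pure-noise quantities $\tilde E_i:=|A_{X_i}|$. Conditional on $M$, the $\tilde E_i$'s are genuinely i.i.d.\ half-normals with scale $cM$, so the usual conformal exchangeability argument on them yields coverage at least $1-\alpha/2$, and one only has to pay a small $O(1/c)$ term for the size-$M$ perturbation of the quantile.

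Given $\epsilon>0$, I would take $n_2=\lceil 2/\alpha\rceil$, which is just large enough that the $(1-\alpha/2)(1+1/n_2)$-empirical quantile is a finite order statistic, and set $n_1=n-n_2$, choosing $N$ so that $n_2/n+2/(n_1+1)\le \epsilon/3$ for all $n>N$. The ``good event'' $G=\{|Y_i|\le M\ \forall i\in\mathcal{I}_2\}\cap\{|m(X_{n+1})|\le M\}$ will then have probability at least $1-\epsilon/3$ by two distribution-free tail bounds. First, $\mathbb{P}(\max_{i\in\mathcal{I}_2}|Y_i|>M)\le n_2/n$, from the uniformity of the position of the overall maximum of $|Y_1|,\dots,|Y_n|$ under the random split. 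Second, the Markov-type inequality
\begin{equation*}
\mathbb{P}\bigl(|m(X)|>t\bigr)\;\le\;2\,\mathbb{P}\bigl(|Y|>t\bigr),\qquad t\ge 0,
\end{equation*}
which holds because $m(X)>t$ forces $\mathbb{P}(Y>t\mid X)\ge 1/2$, combined with the exchangeability bound $\mathbb{P}(|Y_{n+1}|>M)\le 1/(n_1+1)$ among the $n_1+1$ i.i.d.\ values $\{|Y_i|:i\in\mathcal{I}_1\cup\{n+1\}\}$, yields $\mathbb{P}(|m(X_{n+1})|>M)\le 2/(n_1+1)$.

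On $G$, the triangle inequality gives $|E_i-\tilde E_i|\le M$ for every $i\in\mathcal{I}_2$ and $|E_{n+1}^\star-\tilde E_{n+1}|\le M$, so the coverage event is implied by $\{\tilde E_{n+1}\le Q_{1-\alpha/2}(\tilde E)-2M\}$. Rescaling by $cM$ to unit half-normals $U_i=\tilde E_i/(cM)$, this becomes $\{U_{n+1}\le Q_{1-\alpha/2}(U)-2/c\}$; the standard conformal argument applied to the i.i.d.\ continuous variables $U_1,\dots,U_{n_2+1}$ gives $\mathbb{P}(U_{n+1}\le Q_{1-\alpha/2}(U))\ge 1-\alpha/2$, and the $2/c$ shift loses at most $(2/c)\sqrt{2/\pi}\le \epsilon/3$ probability once $c\ge 6\sqrt{2/\pi}/\epsilon$, since $|\mathcal{N}(0,1)|$ has density bounded by $\sqrt{2/\pi}$. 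Combining $G$ with this shifted event via $\mathbb{P}(A\cap B)\ge \mathbb{P}(A)+\mathbb{P}(B)-1$ delivers the desired $\ge 1-\alpha/2-\epsilon$ coverage, while finiteness of $\hat C_n(X_{n+1})$ is automatic since $A_{X_{n+1}}\in\mathbb{R}$ and the empirical quantile is a genuine order statistic whenever $n_2\ge\lceil 2/\alpha\rceil$.

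The main conceptual obstacle is the Markov-type control $\mathbb{P}(|m(X)|>t)\le 2\mathbb{P}(|Y|>t)$: it is the one ingredient beyond raw exchangeability that the distribution-free setting demands, and it is what lets us translate control of the observed marginal $Y$ into control of the \emph{un}observed conditional median at the test point. A minor technical wrinkle is the degenerate case $M=0$, where the Gaussian collapses to a point mass and the rescaling above breaks down; a case split on whether $\mathbb{P}(Y=0)$ is close to $1$ (in which case $m(X)=0$ almost surely up to small error and the interval $[-Q,Q]\ni 0$ automatically covers it) or bounded away from $1$ (in which case $\mathbb{P}(M=0)\to 0$ uniformly fast in $n_1$) absorbs this case into the $\epsilon$ budget.
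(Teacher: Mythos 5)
Your proof is correct, but it takes a genuinely different route from the paper's. The paper introduces a \emph{separation} event $B = \{\lvert\hat\mu_c(X_{i_1}) - \hat\mu_c(X_{i_2})\rvert > 2M \text{ for all } i_1 \neq i_2\in\mathcal{I}_2\cup\{n+1\}\}$ and proves (Lemma~\ref{Crazy Regression Third Lemma}) that on $A\cap B$ the comparison of $\lvert Y_i - \hat\mu_c(X_i)\rvert$ against $\lvert m(X_{n+1}) - \hat\mu_c(X_{n+1})\rvert$ coincides \emph{exactly} with the comparison of $\lvert m(X_i) - \hat\mu_c(X_i)\rvert$ against $\lvert m(X_{n+1}) - \hat\mu_c(X_{n+1})\rvert$; coverage then reduces to a rank statement about the exchangeable family $\{\lvert m(X_i)-\hat\mu_c(X_i)\rvert : i\in\mathcal{I}_2\cup\{n+1\}\}$ with no error term, and the $\epsilon$-budget is spent only on making $A\cap B$ likely. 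You instead avoid the separation event entirely: you sandwich $E_i$ and $E_{n+1}^\star$ within $M$ of the pure-noise magnitudes $\lvert A_{X_i}\rvert$, rescale to unit half-normals, and pay for the induced $2/c$ quantile shift with the density bound $\sqrt{2/\pi}$. Both proofs replace the unobservable median-residuals by a surrogate exchangeable family, but your surrogate is the raw noise rather than the median-residuals, which buys you an explicit rate in $c$ and lets you work with the weaker event requiring only $\lvert m(X_{n+1})\rvert\le M$ rather than $\lvert m(X_i)\rvert\le M$ for every $i\in\mathcal{I}_2\cup\{n+1\}$. Your Markov-type bound $\mathbb{P}(\lvert m(X)\rvert>t)\le 2\,\mathbb{P}(\lvert Y\rvert>t)$ is also a cleaner route to the $2/(n_1+1)$ estimate than the paper's rank-conditioning-plus-geometric-sum argument in Lemma~\ref{Crazy Regression Second Lemma}, and your $n_2/n$ bound for the calibration maximum is tighter than the paper's $n_2/(n_1+1)$ union bound, though neither difference changes the asymptotics.

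One simplification worth noting: the degenerate case $M=0$ needs no sub-case analysis on $\mathbb{P}(Y=0)$. On $G\cap\{M=0\}$ you have $A_x\equiv 0$ and $\lvert Y_i\rvert\le M=0$ for all $i\in\mathcal{I}_2$, so every score is zero and the output interval is exactly $\{0\}$; and $G$ forces $\lvert m(X_{n+1})\rvert\le M=0$, so $m(X_{n+1})=0$ is covered automatically. In the Bonferroni step you can therefore simply define the ``shifted'' event to be all of $\{M=0\}$ on that slice, which leaves your $\epsilon/3$ budget intact with room to spare.
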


The proof for this theorem is in Appendix \ref{Crazy Regression Coverage Proof}.

\begin{remark}
  Theorem \ref{Crazy Regression Coverage Theorem} can be extended
  to Algorithm \ref{Conditional Quantile Interval} by taking
  $f^{\textnormal{lo}}(X_i,Y_i) = f^{\textnormal{hi}}(X_i,Y_i) = Y_i -
  \hat{\mu}_c(X_i)$. The corresponding result shows that given a large enough number of datapoints and a particular data split, there
  exist conformity scores that result in Algorithm \ref{Conditional
    Quantile Interval} capturing the conditional quantile nontrivially with
  probability at least $1-rq - s(1-q)$ for all distributions $P$.
\end{remark}

Due to the definition of $\hat{\mu}_c$, the resulting confidence
intervals will be near-useless; the predictions will be so far off
that the intervals will have width several times the range of the
(slightly clipped) marginal distribution $P_Y$. However, they still
will be finite, and will still achieve predictive inference at a rate
roughly equal to $1-\alpha/2$.  The existence of scores that always
result in higher-than-needed rates of coverage means that any result
like Theorem \ref{Sharpness Case Theorem} that provides a
  nontrivial upper bound for either Algorithm \ref{Conditional Median
    Interval} or Algorithm \ref{Conditional Quantile Interval}'s
  coverage of the conditional median on a specific distribution will
have to restrict the class of {regression functions and/or conformity scores.}

\section{Median Intervals and Predictive Intervals are Equivalent}
\label{Median Intervals are Predictive Section}

Up until this point, we have looked at the existence and accuracy of
algorithms for estimating the conditional median. 
This section shows that any algorithm for the conditional median is also a predictive algorithm, and vice versa. As a consequence, this means there exists a strong lower
bound on the size of any conditional median confidence interval.

\begin{theorem}
\label{All Conditional Median Algorithms}

Let $\hat{C}_n$ be any algorithm that satisfies $(1-\alpha)$-Median. Then, for any nonatomic distribution $P$ on $\mathbb{R}^d\times \mathbb{R}$, we have that
$$\mathbb{P}\big\{Y_{n+1}\in\hat{C}_n(X_{n+1}) \big\}\geq 1-\alpha.$$
That is, $\hat{C}_n$ satisfies $(1-\alpha)$-Predictive for all nonatomic distributions $P$.
\end{theorem}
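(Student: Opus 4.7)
The plan is to reduce the desired predictive coverage to the $(1-\alpha)$-Median guarantee by constructing a family of auxiliary distributions parameterized by a quantile level, applying the coverage hypothesis to each, and then averaging.

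First I would leverage the fact that, since $P$ is nonatomic on $\mathbb{R}^d\times\mathbb{R}$, the conditional distribution $P_x$ is nonatomic on $\mathbb{R}$ for $P_X$-almost every $x$. Thus $Y_{n+1}=F_{X_{n+1}}^{-1}(U_{n+1})$, where $F_x$ is the conditional CDF of $Y\mid X=x$ under $P$ and $U_{n+1}\sim\mathrm{Unif}[0,1]$ is independent of $X_{n+1}$ and of the training sample. This identifies $Y_{n+1}$ with a random quantile of $P_{X_{n+1}}$, which recasts the question: the event $\{Y_{n+1}\in\hat{C}_n(X_{n+1})\}$ becomes $\{F_{X_{n+1}}^{-1}(U_{n+1})\in\hat{C}_n(X_{n+1})\}$.

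Next, for each fixed $q\in(0,1)$ I would introduce the auxiliary distribution $P^{(q)}$ on $\mathbb{R}^d\times\mathbb{R}$ with $X$-marginal $P_X$ and deterministic conditional $Y=F_X^{-1}(q)$. Under $P^{(q)}$ the conditional median at any $x$ is exactly $F_x^{-1}(q)$, so $(1-\alpha)$-Median applied to $P^{(q)}$ yields, for every $q$,
\begin{equation*}
\mathbb{P}\bigl\{F_{X_{n+1}}^{-1}(q)\in\hat{C}_n\bigl(X_{n+1};(X_i,F_{X_i}^{-1}(q))_{i=1}^n\bigr)\bigr\}\;\geq\;1-\alpha.
\end{equation*}
I would then integrate this bound against $q\sim\mathrm{Unif}[0,1]$ independent of the features and use Fubini's theorem to move the expectation over $q$ inside; the probability integral transform then identifies the marginal of $F_{X_{n+1}}^{-1}(q)$ with that of $Y_{n+1}$.

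The hoped-for conclusion is $\mathbb{P}\{Y_{n+1}\in\hat{C}_n(X_{n+1};D)\}\geq1-\alpha$, where $D=(X_i,Y_i)_{i=1}^n$ is the original i.i.d. training sample from $P$. The hard part is precisely the final identification: the median-coverage bound above uses training data $(X_i,F_{X_i}^{-1}(q))_{i=1}^n$ in which the same quantile $q$ is shared across all $n$ points, whereas the predictive-coverage statement requires training data in which each $Y_i$ corresponds to an \emph{independent} uniform variable. Closing this gap appears to require either an auxiliary coupling that encodes the independent $U_i$'s into a richer random structure (exploiting nonatomicity of $P$ to manufacture the required additional randomness while keeping the algorithm's inputs marginally $P$-distributed) or, alternatively, a proof by contradiction: assume predictive coverage fails under $P$ with positive slack $\epsilon$, locate the positive-probability event $\{Y_{n+1}\notin\hat{C}_n(X_{n+1})\}$, and use nonatomicity to construct a perturbation of $P$ whose conditional median at $X_{n+1}$ is pushed into the complement of $\hat{C}_n(X_{n+1})$ on a set of probability exceeding $\alpha$, thereby contradicting the median-coverage hypothesis.
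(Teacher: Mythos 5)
Your reduction to the point-mass conditionals $P^{(q)}$ does not close, and you correctly flag the reason yourself: the $(1-\alpha)$-Median guarantee applied to $P^{(q)}$ only controls
\[
\mathbb{P}\bigl\{F_{X_{n+1}}^{-1}(q)\in\hat{C}_n\bigl(X_{n+1};(X_i,F_{X_i}^{-1}(q))_{i\le n}\bigr)\bigr\},
\]
where the \emph{same} quantile level $q$ appears in every training response, whereas the target probability evaluates $\hat{C}_n$ on i.i.d.\ draws $(X_i,F_{X_i}^{-1}(U_i))$ with \emph{independent} $U_i$'s. Averaging over $q$ never changes which inputs the algorithm is fed, so Fubini only recovers the marginal of the test point, not the joint law of the training sample. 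Since $\hat{C}_n$ is an arbitrary function of its training data, one cannot argue that the two evaluations agree, and there is no obvious monotonicity or exchangeability to exploit. Neither of your two proposed escape routes is actually constructed: the ``richer coupling'' would have to hide $n+1$ independent uniforms inside a single distribution from which the data look i.i.d.\ and whose conditional median matches $Y_{n+1}$, and you give no such construction; the contradiction route asks you to perturb $P$ so the conditional median lands outside $\hat{C}_n(X_{n+1})$, but $\hat{C}_n(X_{n+1})$ itself depends on the (random, and now re-distributed) training data, so the perturbation changes the very object you are trying to avoid.

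The paper's proof sidesteps the quantile-coupling issue entirely via a discretization argument in the style of Barber's conditional-mean lower bound. One draws a large finite pool $\mathcal{L}$ of $M$ i.i.d.\ points from $P$, then compares two sampling schemes from $\mathcal{L}$: $Q_1$ (without replacement, which marginalizes back to $P$) and $Q_2$ (with replacement, i.e.\ the uniform distribution on $\mathcal{L}$). Because $P_X$ is nonatomic, the $X^j$'s in $\mathcal{L}$ are a.s.\ distinct, so under $Q_2$ the conditional law of $Y$ given $X=X^j$ is a point mass at $Y^j$ and hence $\mathrm{Median}_{Q_2}(Y\mid X)=Y$. Applying $(1-\alpha)$-Median to the distribution $Q_2$ makes the median-coverage event and the predictive event \emph{literally coincide}. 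A union bound gives $\mathrm{TV}(Q_1,Q_2)\le n^2/M$, so the predictive coverage under $Q_1$ (and hence under $P$) is at least $1-\alpha-n^2/M$, and letting $M\to\infty$ finishes. The crucial idea you are missing is precisely this finite discretization, which turns the conditional median into the observed response without any need to reconcile shared versus independent quantile levels.
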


\begin{proof}
  The proof above uses the same approach from the proof of Theorem 1
  in \cite{barber2020distribution}. Consider an arbitrary $\hat{C}_n$
  that satisfies $(1-\alpha)$-Median, and let $P$ be any distribution
  over $\mathbb{R}^d\times\mathbb{R}$ for which $P_X$ is
  nonatomic. Pick some $M\geq n+1$, and sample
  $\mathcal{L} = \{(X^j,Y^j):1\leq j\leq
  M\}\stackrel{\text{i.i.d.}}{\sim} P$.
  We define two different ways of sampling our data from
  $\mathcal{L}$.

Fix $\mathcal{L}$ and pick $(X_1,Y_1),\ldots,(X_{n+1},Y_{n+1})$ \textbf{without} replacement from $\mathcal{L}$. Call this method of sampling $Q_1$. It is clear that after marginalizing over $\mathcal{L}$, the $(X_i,Y_i)$'s are effectively drawn i.i.d.~from $P$; thus, we have that
$$\mathbb{P}_{P}\{Y_{n+1}\in \hat{C}_n(X_{n+1})\}=\mathbb{E}_{\mathcal{L}}\big[\mathbb{P}_{Q_1}\{Y_{n+1}\in \hat{C}_n(X_{n+1})|\mathcal{L}\}\big].$$

Now, pick $(X_1,Y_1),\ldots,(X_{n+1},Y_{n+1})$ \textbf{with} replacement from $\mathcal{L}$, and call this method of sampling $Q_2$. Note that because $P_X$ is nonatomic, the $X^j$'s are distinct with probability $1$, which means that $\textrm{Median}_{Q_2}(Y|X=X^j) = Y^j$. Then, as $\hat{C}_n$ applies to all distributions, it applies to our point distribution over $\mathcal{L}$; thus, we have that for all $\mathcal{L}$,
$$\mathbb{P}_{Q_2}\{Y_{n+1}\in \hat{C}_n(X_{n+1})|\mathcal{L}\} = \mathbb{P}_{Q_2}\{\textrm{Median}_{Q_2}(Y|X=X_{n+1})\in \hat{C}_n(X_{n+1})|\mathcal{L}\} \geq 1-\alpha.$$ 

Let $R$ be the event that any two of the $(X_i,Y_i)$ are equal to each other; that is, $R = \{(X_a,Y_a)=(X_b,Y_b)$ $\textrm{ for any }a < b\}$. Note that under $Q_2$, for $1\leq a<b\leq n+1$, the probability of $(X_a,Y_a)$ and $(X_b,Y_b)$ being equal is $1/M$.
Then, by the union bound, 
$$\mathbb{P}_{Q_2}\{R\}\leq \sum_{1\leq a<b\leq n+1}\mathbb{P}_{Q_2}\{(X_a,Y_a)=(X_b,Y_b)\}\leq \frac{n^2}{M},$$
where the last step is from the fact that the number of possible pairs $(a,b)$ is bounded above by $n^2$. Meanwhile, we know that $\mathbb{P}_{Q_1}\{R\}= 0$ by the definition of $Q_1$.

We can use this to bound the total variation distance between $Q_1$ and $Q_2$. For any fixed $\mathcal{L}$ and any event $E$, note that $\mathbb{P}_{Q_1}\{E\} = \mathbb{P}_{Q_1}\{E|R^C\} = \mathbb{P}_{Q_2}\{E|R^C\}$. Then, we can calculate
\begin{align*}
|\mathbb{P}_{Q_1}\{E\} - \mathbb{P}_{Q_2}\{E\}| =& 
|\mathbb{P}_{Q_1}\{E|R^C\} - (\mathbb{P}_{Q_2}\{E|R^C\}\mathbb{P}_{Q_2}\{R^C\} + \mathbb{P}_{Q_2}\{E|R\}\mathbb{P}_{Q_2}\{R\})| \\
=& |\mathbb{P}_{Q_2}\{E|R^C\}\mathbb{P}_{Q_2}\{R\} - \mathbb{P}_{Q_2}\{E|R\}\mathbb{P}_{Q_2}\{R\}| \\
=& \mathbb{P}_{Q_2}\{R\} |\mathbb{P}_{Q_2}\{E|R^C\}- \mathbb{P}_{Q_2}\{E|R\}| \\
\leq & n^2/M.
\end{align*}

Therefore, for any fixed $\mathcal{L}$, the total variation distance between the distributions $Q_1$ and $Q_2$ is at most $n^2/M$, implying that
$$\mathbb{P}_{Q_1}\{Y_{n+1}\in \hat{C}_n(X_{n+1})|\mathcal{L}\}\geq \mathbb{P}_{Q_2}\{Y_{n+1}\in \hat{C}_n(X_{n+1}))|\mathcal{L}\}-n^2/M\geq 1-\alpha-n^2/M,$$
which means that 
$$\mathbb{P}_{P}\{Y_{n+1}\in \hat{C}_n(X_{n+1})\}=\mathbb{E}_{\mathcal{L}}\big[\mathbb{P}_{Q_1}\{Y_{n+1}\in \hat{C}_n(X_{n+1})|\mathcal{L}\}\big]\geq 1-\alpha-n^2/M.$$

Taking the limit as $M$ goes to infinity gives the result.
\end{proof}

\begin{remark}
  Theorem \ref{All Conditional Median Algorithms} also applies to all
  algorithms that satisfy $(1-\alpha,q)$-Quantile; for our uniform
  distribution over $\mathcal{L}$,
  $\textrm{Quantile}_q(Y|X=X^j)=Y^j$, so the proof translates
  exactly. As a result, this means that all algorithms that satisfy
  $(1-\alpha,q)$-Quantile also satisfy $(1-\alpha)$-Predictive for all
  nonatomic distributions $P$.
\end{remark}

\begin{remark}
  The approach taken in the proof of Theorem \ref{All Conditional
    Median Algorithms} is similar to those used to show the limits of
  distribution-free inference in other settings. As mentioned earlier,
  \cite{barber2020distribution} shows that in the setting of
  distributions $P$ over $\mathbb{R}^d\times \{0,1\}$, any confidence
  interval $\hat{C}_n(X_{n+1})$ for $\mathbb{E}[Y|X=X_{n+1}]$ with
  coverage $1-\alpha$ must contain $Y_{n+1}$ with probability
  $1-\alpha$ for all nonatomic distributions $P$, and goes on to
  provide a lower bound for the length of the confidence
  interval. Additionally, \cite{foygel2021limits} proves a similar
  theorem about predictive algorithms $\hat{C}_n(X_{n+1})$ for
  $Y_{n+1}$ that are required to have a weak form of conditional
  coverage. The proof for the result from
  \cite{barber2020distribution} involves the same idea of
  marginalizing over a large finite sampled subset $\mathcal{L}$ in
  order to apply $\hat{C}_n$ to the distribution over $\mathcal{L}$;
  the proof for the result from \cite{foygel2021limits} focuses on
  sampling a large number of datapoints conditioned on whether or not
  they belong to a specific subset
  $\mathcal{B}\subseteq\mathbb{R}^d\times\mathbb{R}$. In both cases,
  studying two sampling distributions and measuring the total
  variation distance between them was crucial. Thus, it seems that
  this strategy may have further use in the future when studying
  confidence intervals for other parameters or data in a
  distribution-free setting.
\end{remark}

We now prove a similar result in the opposite direction.

\begin{theorem}
\label{All Predictive Algorithms}

Let $\hat{C}_n$ be any algorithm that only outputs confidence intervals and satisfies $(1-\alpha/2)$-Predictive. Then, $\hat{C}_n$ satisfies $(1-\alpha)$-Median.
\end{theorem}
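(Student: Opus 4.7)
The plan is to exploit the fact that $\hat{C}_n(X_{n+1})$ is, by assumption, an interval in $\mathbb{R}$. Write $\hat{C}_n(X_{n+1}) = [L, U]$ where $L$ and $U$ are (possibly infinite) functions of $\mathcal{D}$ and $X_{n+1}$, and set $M = \textrm{Median}(Y_{n+1} \mid X_{n+1})$. The central observation is geometric: if $M \notin [L,U]$, then the entire interval lies on one side of a median of the conditional law $Y_{n+1}\mid X_{n+1}$, so no more than half of the conditional mass of $Y_{n+1}$ can land in $[L,U]$.

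To formalize this, I would first suppose $M < L$ (the case $M > U$ is symmetric). By the defining property of the median, $\mathbb{P}\{Y_{n+1} \le M \mid X_{n+1}\} \ge 1/2$, which forces $\mathbb{P}\{Y_{n+1} < L \mid X_{n+1}\} \ge 1/2$, and hence $\mathbb{P}\{Y_{n+1} \in [L,U] \mid \mathcal{D}, X_{n+1}\} \le 1/2$. Taking expectations and splitting on whether $M \in \hat{C}_n$ gives
\begin{align*}
\mathbb{P}\{Y_{n+1} \in \hat{C}_n(X_{n+1})\}
&= \mathbb{P}\{Y_{n+1} \in \hat{C}_n,\, M \in \hat{C}_n\} + \mathbb{P}\{Y_{n+1} \in \hat{C}_n,\, M \notin \hat{C}_n\} \\
&\le \mathbb{P}\{M \in \hat{C}_n\} + \tfrac{1}{2}\,\mathbb{P}\{M \notin \hat{C}_n\} \\
&= \tfrac{1}{2} + \tfrac{1}{2}\,\mathbb{P}\{M \in \hat{C}_n\}.
\end{align*}
Substituting the hypothesis $\mathbb{P}\{Y_{n+1} \in \hat{C}_n\} \ge 1-\alpha/2$ and rearranging yields $\mathbb{P}\{M \in \hat{C}_n\} \ge 1-\alpha$, which is exactly $(1-\alpha)$-Median.

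The only genuine subtlety I foresee is non-uniqueness of the conditional median: when the set of valid medians is a nondegenerate interval $[m_-, m_+]$, the proof should apply to some fixed choice (e.g.\ the lower median) or read the coverage event as ``some valid median lies in $\hat{C}_n$''. Either interpretation leaves the chain above intact, because the bound $\mathbb{P}\{Y_{n+1}\in[L,U]\mid X_{n+1}\}\le 1/2$ needs only that $[m_-, m_+]\cap[L,U] = \emptyset$. I do not anticipate other obstacles: the proof rests squarely on the interval hypothesis, and indeed the theorem would fail without it, since an algorithm returning $\mathbb{R}\setminus\{M(X_{n+1})\}$ can cover $Y_{n+1}$ with probability one for continuous distributions while covering $M$ with probability zero.
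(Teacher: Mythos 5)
Your proof is correct and mirrors the paper's argument exactly: the same conditioning on whether the conditional median falls in $\hat{C}_n(X_{n+1})$, the same geometric observation that an interval missing the median captures at most half the conditional mass, and the same rearrangement to get $\mathbb{P}\{m(X_{n+1})\in\hat{C}_n(X_{n+1})\}\geq 1-\alpha$. The added remarks on median non-uniqueness and the necessity of the interval hypothesis are welcome but do not change the route.
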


\begin{proof}

Consider any distribution $P$. For all $x\in\mathbb{R}^d$ in the
support of $P$, set
$m(x) = \textrm{Median}(Y|X=x)$.

We know that under the distribution $P$, $\mathbb{P}\{Y_{n+1}\in\hat{C}_n(X_{n+1}) \}\geq 1-\alpha/2$. Then, we can condition on whether or not the conditional median is contained in $\hat{C}_n$ as follows:
\begin{align*}
    1-\alpha/2 \leq &\mathbb{P}\{Y_{n+1}\in\hat{C}_n(X_{n+1}) \} \\
    =& \mathbb{P}\{Y_{n+1}\in\hat{C}_n(X_{n+1}) | m(X_{n+1})\in\hat{C}_n(X_{n+1})\}\mathbb{P}\{ m(X_{n+1})\in\hat{C}_n(X_{n+1})\}\\
    &+ \mathbb{P}\{Y_{n+1}\in\hat{C}_n(X_{n+1}) | m(X_{n+1})\not\in\hat{C}_n(X_{n+1})\}\mathbb{P}\{ m(X_{n+1})\not\in\hat{C}_n(X_{n+1})\} \\
    \leq& \mathbb{P}\{ m(X_{n+1})\in\hat{C}_n(X_{n+1})\}+ \frac{1}{2}\mathbb{P}\{ m(X_{n+1})\not\in\hat{C}_n(X_{n+1})\} \\
    =& \frac{1}{2} + \frac{1}{2}\mathbb{P}\{ m(X_{n+1})\in\hat{C}_n(X_{n+1})\}.
\end{align*}
The key insight here is that because $\hat{C}_n$ only outputs confidence intervals, $\hat{C}_n(X_{n+1})$ can cover at most half of the conditional distribution of $Y|X = X_{n+1}$ if $m(X_{n+1})$ is not contained in the confidence interval.

Shifting the constant over and multiplying by $2$ tells us that $\mathbb{P}\{ m(X_{n+1})\in\hat{C}_n(X_{n+1})\}\geq 1-\alpha$, as desired.

\end{proof}

\begin{remark}
\label{All Predictive Algorithms Remark}
Theorem \ref{All Predictive Algorithms} can be modified to replace $(1-\alpha)$-Median with $(1-\alpha,q)$-Quantile. In particular, let $\hat{C}_n(x) = [\hat{L}_n(x), \hat{H}_n(x)]$ be any algorithm that only outputs confidence intervals and satisfies $\mathbb{P}\{Y_{n+1} \geq \hat{L}_n(X_{n+1})\}\geq 1-rq$ and $\mathbb{P}\{Y_{n+1} \leq \hat{H}_n(X_{n+1})\}\geq 1-s(1-q)$ for some $r+s=\alpha$. Then, $\hat{C}_n$ satisfies $(1-\alpha,q)$-Quantile. 

Alternatively, we can also conclude that if $\hat{C}_n$ only outputs confidence intervals and satisfies $(1-\min\{q,1-q\}\alpha)$-Predictive, then it satisfies $(1-\alpha,q)$-Quantile. The proofs of both of these statements are in Appendix \ref{All Predictive Intervals Extension Proof}.

\end{remark}

Theorem \ref{All Conditional Median Algorithms} tells us that
conditional median inference is at least as imprecise as predictive
inference. As a result, because all predictive intervals have
nonvanishing widths (assuming nonzero conditional variance) no matter
the sample size $n$, it is not possible to write down conditional
median algorithms with widths converging to $0$. Thus, it may be
better to study other distribution parameters similar to the conditional median if we are looking
for better empirical performance. For discussion on related distribution parameters that are worth studying and may result in stronger inference, refer to Section \ref{Further Work Section}. 

Theorem \ref{All Predictive Algorithms} tells us that one way to approach conditional median inference is to apply strong predictive algorithms. It also suggests that improvements in predictive inference may translate to conditional median inference. 

Lastly, we know that Algorithm \ref{Conditional Median Interval}
captures $Y_{n+1}$ with probability $1-\alpha/2$. Because there is space between the bounds of Theorems \ref{All Conditional Median Algorithms} and \ref{All Predictive Algorithms}, there may
exist a better conditional median algorithm that only captures
$Y_{n+1}$ with probability $1-\alpha$. Based on our result from
Section \ref{Algorithm Sharpness Section}, any such algorithm will
likely follow a format different than the split conformal
approach. Studying this problem in more detail, particularly on
difficult distributions $P$, might lead to more accurate conditional
median algorithms.

\section{Simulations}
\label{Simulations Section}

In this section, we analyze the impact of different conformity scores
on the outcome of Algorithm \ref{Conditional Quantile
  Interval}. Specifically, we look at the four following conformity
scores:
\begin{enumerate}[align=left, itemsep =-2pt, topsep = 5pt]
    \item [Score 1:] $f_1^{\textnormal{lo}}(X_i,Y_i) = f_1^{\textnormal{hi}}(X_i,Y_i) = Y_i-\hat{\mu}(X_i)$. We train $\hat{\mu}$ to predict the conditional mean using quantile regression forests on the dataset $\{(X_i,Y_i):i\in\mathcal{I}_1\}$. 
    \item [Score 2:] $f_2^{\textnormal{lo}}(X_i,Y_i) = f_2^{\textnormal{hi}}(X_i,Y_i)= \frac{Y_i-\hat{\mu}(X_i)}{\hat{\sigma}(X_i)}$. We train $\hat{\mu}$ and $\hat{\sigma}$ jointly using random forests on the dataset $\{(X_i,Y_i):i\in\mathcal{I}_1\}$.
    \item [Score 3:]  $f_3^{\textnormal{lo}}(X_i,Y_i) = Y_i - \hat{Q}^{\textnormal{lo}}(X_i)$ and $f_3^{\textnormal{hi}}(X_i,Y_i) = Y_i - \hat{Q}^{\textnormal{hi}}(X_i)$. We train $\hat{Q}^{\textnormal{lo}}$ and $\hat{Q}^{\textnormal{hi}}$ to predict the conditional $\alpha/2$ quantile and $1-\alpha/2$ quantile, respectively, using quantile regression forests on the dataset $\{(X_i,Y_i):i\in\mathcal{I}_1\}$. 
    \item [Score 4:]  $f_4^{\textnormal{lo}}(X_i,Y_i)
      =f_4^{\textnormal{hi}}(X_i,Y_i) = \hat{F}_{Y|X=X_i}(Y_i)$. We
      create $\hat{F}_{Y|X=X_i}(Y_i)$ by using $101$ quantile
      regression forests trained on $\{(X_i,Y_i):i\in\mathcal{I}_1\}$
      to estimate the conditional $q^{\textnormal{th}}$ quantile for
      $q\in\{0,0.01,\ldots,0.99,1\}$ and use linear interpolation
      between quantiles to estimate the conditional CDF. Our training method ensures that quantile predictions will never cross.
\end{enumerate}

To compare the performance of Algorithm \ref{Conditional Quantile Interval} against a method that does not have a distribution-free guarantee, we also consider a nonconformalized algorithm that outputs $\hat{C}_n(X_{n+1}) = [\hat{Q}^{\textnormal{lo}}(X_{n+1}), \hat{Q}^{\textnormal{hi}}(X_{n+1})]$, where $\hat{Q}^{\textnormal{lo}}$ and $\hat{Q}^{\textnormal{hi}}$ are trained on $\mathcal{D}$ to predict the conditional $\alpha/2$ quantile and $1-\alpha/2$ quantile, respectively, using quantile regression forests. We refer to this algorithm as QRF.

In order to test the conditional median coverage rate, we must look at
distributions for which the conditional median is known and,
therefore, focus on simulated datasets. We consider the performance of
Algorithm \ref{Conditional Quantile Interval} and QRF on these three
distributions:
\begin{enumerate}[align=left, itemsep =-2pt, topsep = 5pt]
    \item [Distribution 1: ] We draw $(X,Y)\sim P_1$ from $\mathbb{R}^{d}\times\mathbb{R}$, where $d=10$. Here, $X=(X^1,\ldots,X^d)$ is an equicorrelated multivariate Gaussian vector with mean zero and $\textrm{Var}(X^i)=1$, $\textnormal{Cov}(X^i,X^j)=0.25$ for $i\neq j$. We set $Y = (X^1+X^2)^2-X^3+\sigma(X)\epsilon$, where $\epsilon\sim \mathcal{N}(0,1)$ is independent of $X$ and $\sigma(x) = 0.1 + 0.25\|x\|_2^2$ for all $x\in\mathbb{R}^d$.
    \item [Distribution 2: ] We draw $(X,Y)\sim P_2$ from $\mathbb{R}\times\mathbb{R}$. Draw $X\sim\textrm{Unif}[-4\pi,4\pi]$ and $Y=U^{1/4}f(X)$, where $U\sim\textrm{Unif}[0,1]$ is independent of $X$ and $f(x) = 1+|x|\sin^2(x)$ for all $x\in\mathbb{R}$.
    \item [Distribution 3: ] We draw $(X,Y)\sim P_3$ from $\mathbb{R}\times\mathbb{R}$. Draw $X\sim\textrm{Unif}[-1,1]$ and set $Y=B\cdot f(X)$, where $B\sim\textrm{Bernoulli}(0.5+2\delta)$ is independent of $X$ and $f(x)=\gamma\{Mx\}-\frac{\gamma}{2}-(-1)^{\lfloor Mx\rfloor}(1-\frac{\gamma}{2})$ for all $x\in\mathbb{R}$. Note that $\{r\}$ is the fractional part of $r$. We set $\delta = 0.0001$ and $M = 1/\gamma = 25$.
\end{enumerate}
Distributions 2 and 3 are shown in Figure \ref{Figure: Examples of
  Distributions}.

\begin{figure}[ht]
\centering
\begin{subfigure}{0.48\linewidth}
\centering
\captionsetup{justification=centering,margin=0.25cm}
\includegraphics[width=\linewidth]{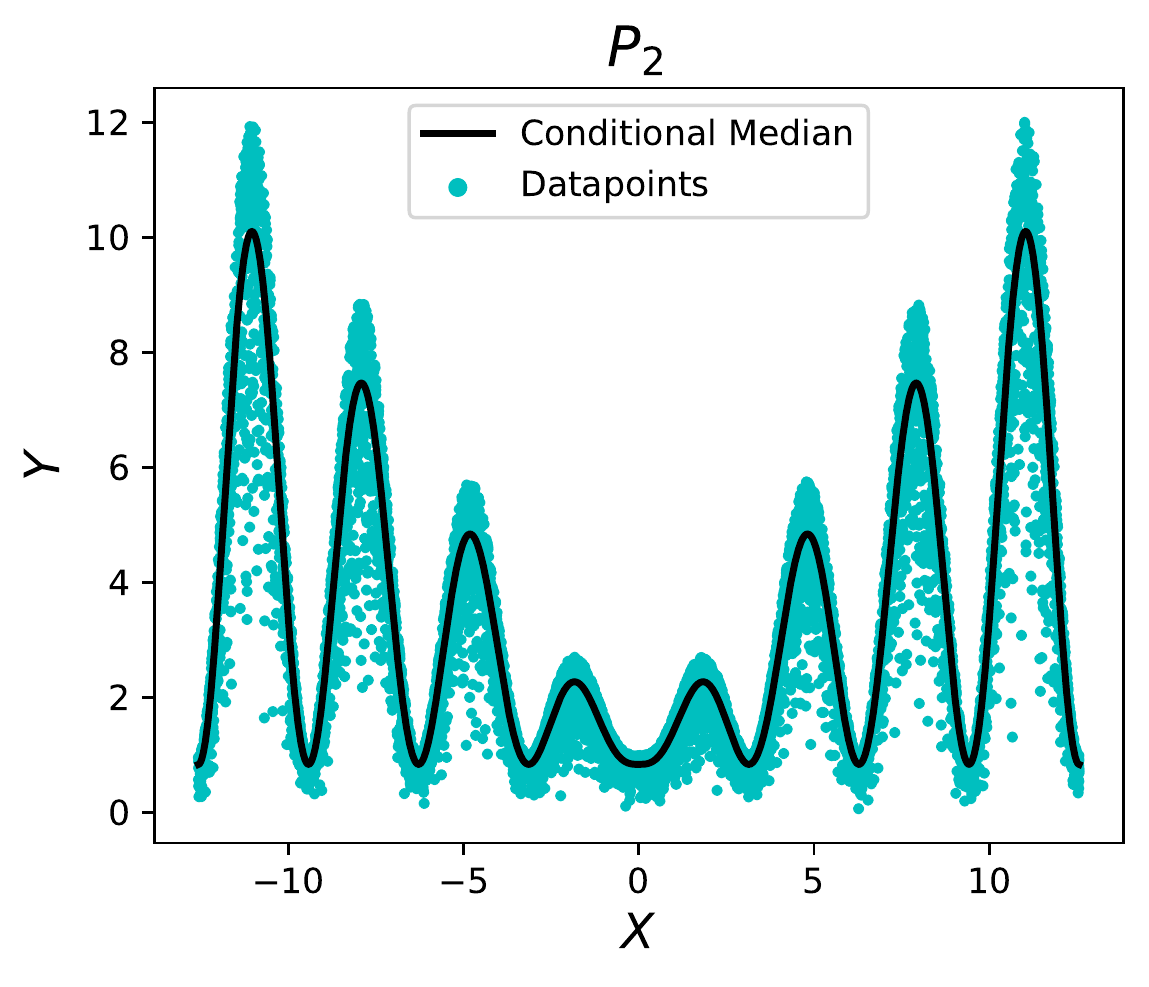}
\end{subfigure}
\begin{subfigure}{0.48\linewidth}
\centering
\captionsetup{justification=centering,margin=0.25cm}
\includegraphics[width=\linewidth]{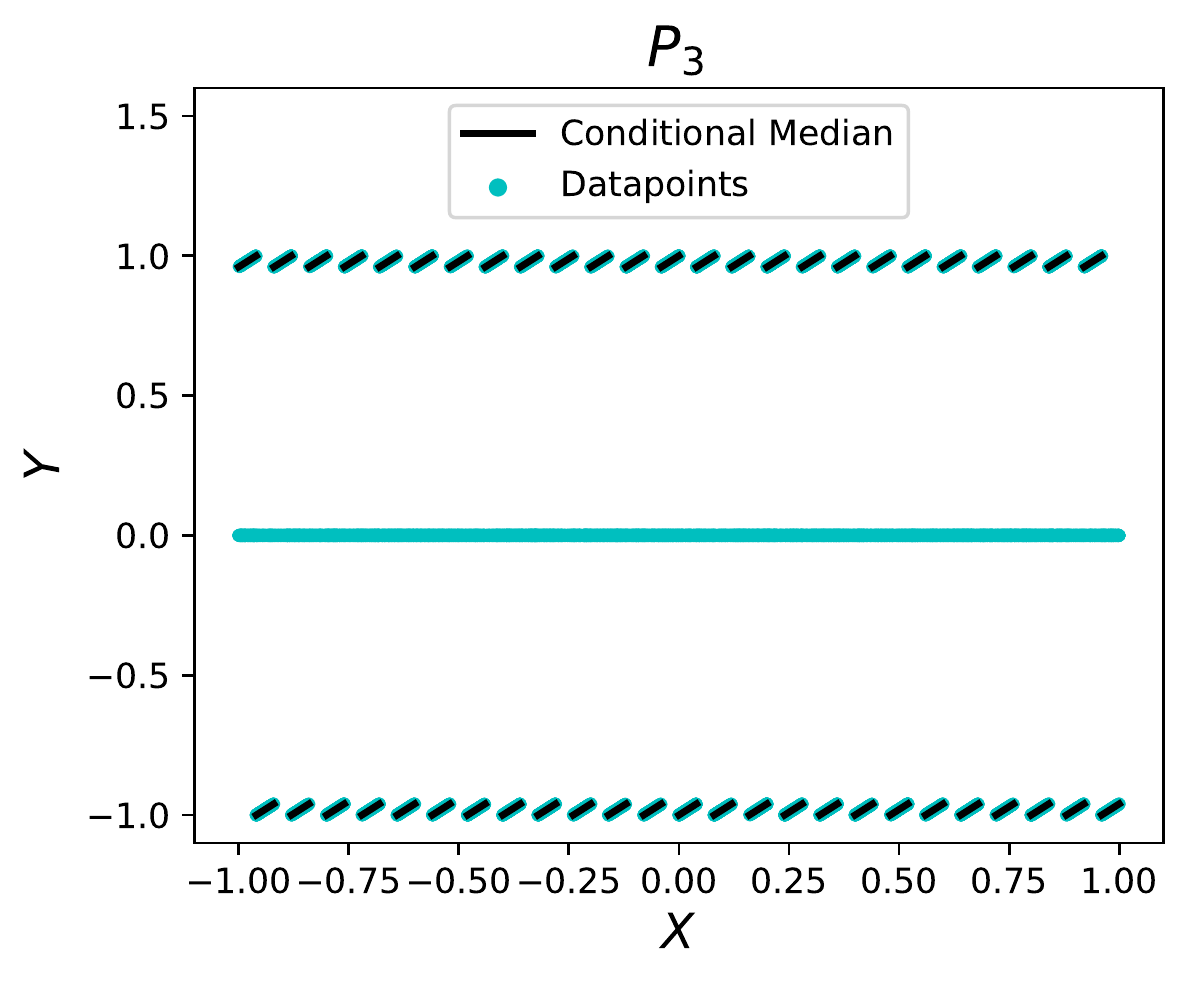}
\end{subfigure}
\caption{Plots of $n=10,000$ datapoints from the two distributions
  $P_2$ and $P_3$ overlaid with the conditional medians.  The left
  panel is a case where the conditional distribution $Y|X$ has high
  heteroskedasticity. The right is a case where it is nearly
  impossible to tell the location of the conditional median.}
\label{Figure: Examples of Distributions}
\end{figure}

For each distribution, we run Algorithm \ref{Conditional Quantile Interval} using each conformity score, as well as QRF, to get a confidence interval for the conditional median. We run 500 trials; in each trial, we set $n=5,000$ with $n_1=n_2=n/2$ and $\alpha=0.1$ with $r=s=\alpha/2$. For a separate study on the impact of $n$ on the confidence interval width and coverage, we refer the reader to \cite{sesia2020comparison}. We test coverage on $5,000$ datapoints for each trial. The average coverage rate, average interval width, and other statistics for each distribution and conformity score are shown in Figure \ref{Figure: Coverage Rates and Interval Widths}. An example of the resulting confidence intervals for a single trial on Distribution 2 are displayed in Figure \ref{Figure: Confidence Interval Examples}.

\begin{figure}[H]
    \centering
    \def\arraystretch{1.25}
    \begin{tabular}{|c|c|c|c|c|c|c|}
    \hline
    Distribution & Score & AC & SDAC & MCC & AW & SDAW \\
    \Xhline{4\arrayrulewidth}
    \multirow{5}{*}{1} & 1 & $99.08\%$ & $0.20\%$ & $0.0\%$ & $14.08$ & $0.573$ \\
    \cline{2-7}
     & 2 & $99.78\%$ & $0.09\%$ & $5.0\%$ & $13.03$ & $0.405$ \\
    \cline{2-7}
     & 3 & $99.72\%$ & $0.10\%$ & $8.4\%$ & $12.86$ & $0.358$ \\
    \cline{2-7}
     & 4 & $99.77\%$ & $0.09\%$ & $12.0\%$ & $13.02$ & $0.398$ \\
    \cline{2-7}
     & QRF & $99.73\%$ & $0.09\%$ & $1.6\%$ & $11.10$ & $0.191$ \\
    \Xhline{4\arrayrulewidth}
    \multirow{5}{*}{2} & 1 & $99.85\%$ & $0.20\%$ & $92.4\%$ & $4.537$ & $0.174$ \\
    \cline{2-7}
     & 2 & $99.48\%$ & $0.29\%$ & $78.4\%$ & $3.604$ & $0.093$ \\
    \cline{2-7}
     & 3 & $99.89\%$ & $0.14\%$ & $95.6\%$ & $3.619$ & $0.060$ \\
    \cline{2-7}
     & 4 & $99.87\%$ & $0.14\%$ & $93.6\%$ & $3.700$ & $0.087$ \\
    \cline{2-7}
     & QRF & $99.91\%$ & $0.12\%$ & $93.4\%$ & $3.48$ & $0.051$ \\
    \Xhline{4\arrayrulewidth}
    \multirow{5}{*}{3} & 1 & $90.05\%$ & $0.86\%$ & $15.4\%$ & $2.122$ & $0.044$ \\
    \cline{2-7}
     & 2 & $89.97\%$ & $0.87\%$ & $19.8\%$ & $2.084$ & $0.051$ \\
    \cline{2-7}
     & 3 & $90.14\%$ & $0.87\%$ & $4.4\%$ & $1.989$ & $0.003$ \\
    \cline{2-7}
     & 4 & $90.00\%$ & $0.88\%$ & $0.0\%$ & $1.990$ & $0.002$ \\
    \cline{2-7}
     & QRF & $83.98\%$ & $0.92\%$ & $0.0\%$ & $1.962$ & $0.017$ \\
    \Xhline{4\arrayrulewidth}
    \end{tabular}
    
    \caption{For each distribution and conformity score, we calculate: average coverage (AC), an estimate of $\mathbb{P}\{\textrm{Median}(Y_{n+1}|X_{n+1})\in \hat{C}_n(X_{n+1})\}$; standard deviation of average coverage (SDAC), an estimation of $\textrm{Var}(\mathbb{P}\{\textrm{Median}(Y_{n+1}|X_{n+1})\in \hat{C}_n(X_{n+1})|\mathcal{D}\})^{1/2}$, where $\mathcal{D} = \{(X_1,Y_1),\ldots,(X_n,Y_n)\}$; minimum conditional coverage (MCC), an estimate of $\displaystyle\min_x
        \mathbb{P}\{\textrm{Median}(Y|X=x)\in\hat{C}_n(x)\}$; average width (AW), an estimate of $\mathbb{E}[\textrm{len}(\hat{C}_n(X_{n+1}))]$; and standard deviation of average width (SDAW), an estimate of $\textrm{Var}(\mathbb{E}[\textrm{len}(\hat{C}_n(X_{n+1}))|\mathcal{D}])^{1/2}$. Estimations are averaged over 500 trials. $1-\alpha=0.9$ for all trials.}
    \label{Figure: Coverage Rates and Interval Widths}
\end{figure}

\begin{figure}[!ht]
\centering
\begin{subfigure}{0.45\linewidth}
\centering
\captionsetup{justification=centering,margin=0.25cm}
\caption{$f_1^{\textnormal{lo}}(X_i,Y_i) =
  f_1^{\textnormal{hi}}(X_i,Y_i) = Y_i-\hat{\mu}(X_i)$. $99.50\%$
  coverage; $4.52$ average width.} 
\includegraphics[width=\linewidth]{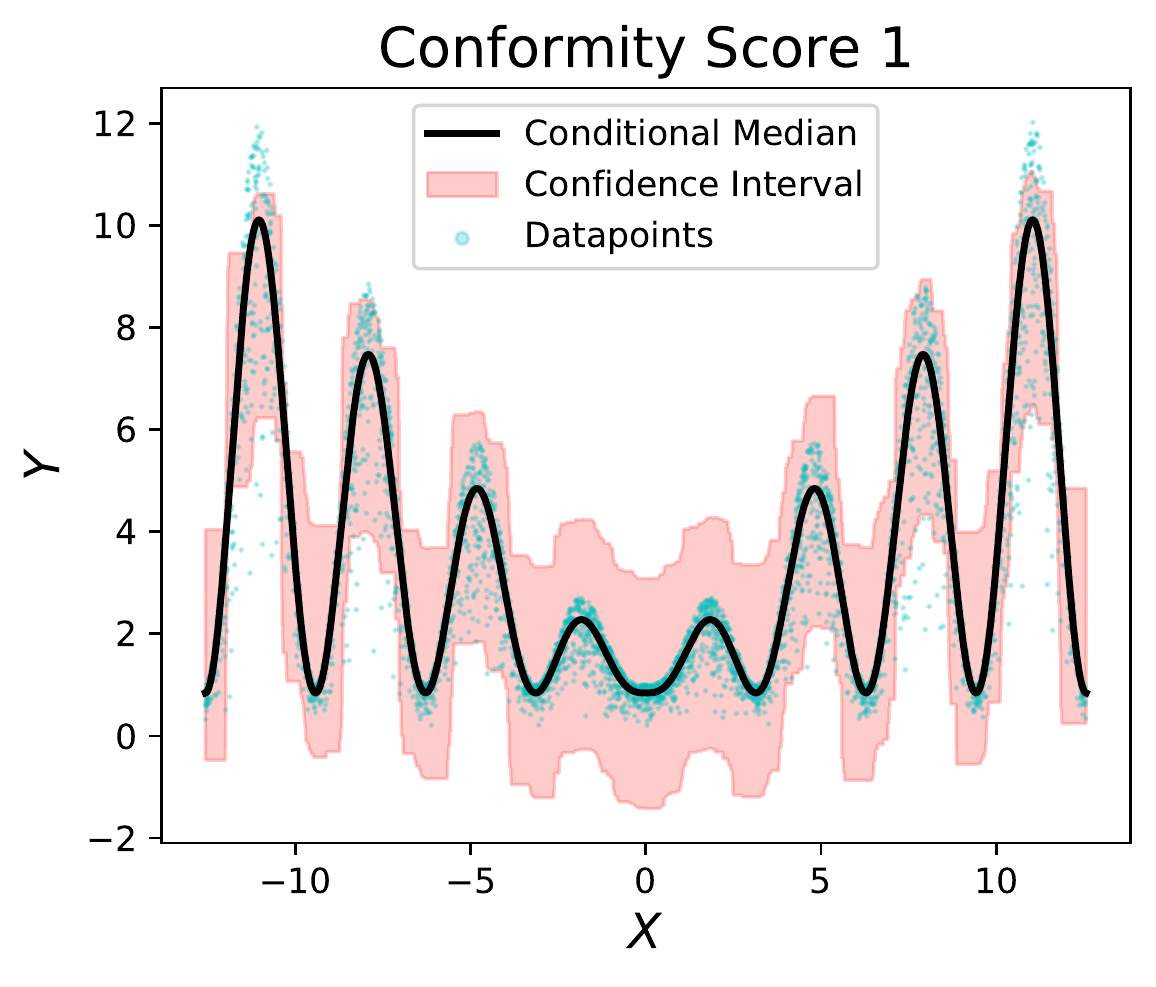}
\end{subfigure}
\begin{subfigure}{0.45\linewidth}
\centering
\captionsetup{justification=centering,margin=0.5cm}
\caption{$f_2^{\textnormal{lo}}(X_i,Y_i) = f_2^{\textnormal{hi}}(X_i,Y_i)= \frac{Y_i-\hat{\mu}(X_i)}{\hat{\sigma}(X_i)}$. $99.76\%$ coverage; $3.61$ average width.}
\includegraphics[width=\linewidth]{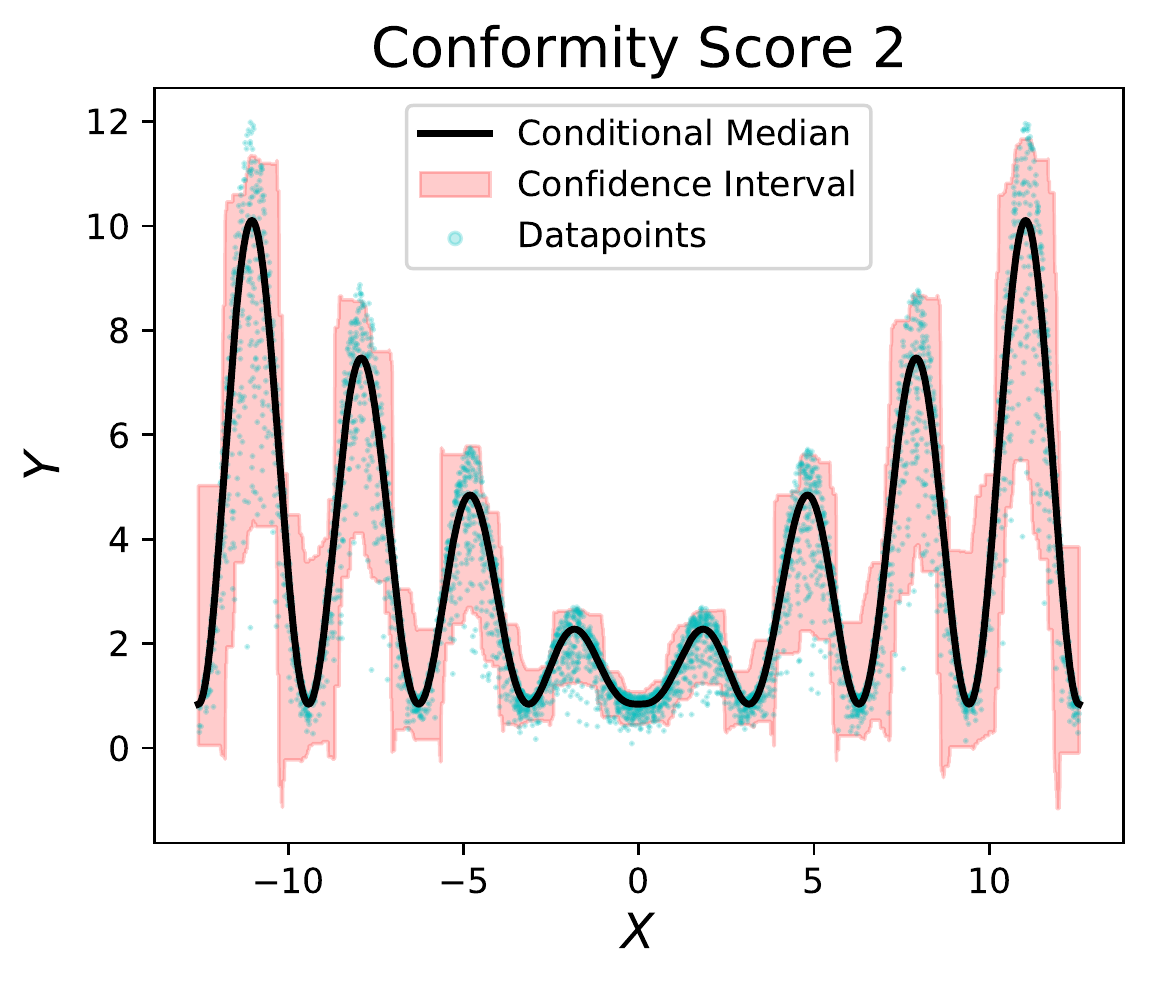}
\end{subfigure}
\begin{subfigure}{0.45\linewidth}
\centering
\captionsetup{justification=centering,margin=0.25cm}
\caption{$f_3^{\textnormal{lo}}(X_i,Y_i) = Y_i - \hat{Q}^{\textnormal{lo}}(X_i)$; $f_3^{\textnormal{hi}}(X_i,Y_i) = Y_i - \hat{Q}^{\textnormal{hi}}(X_i)$. $99.72\%$ coverage; $3.61$ average width.}
\includegraphics[width=\linewidth]{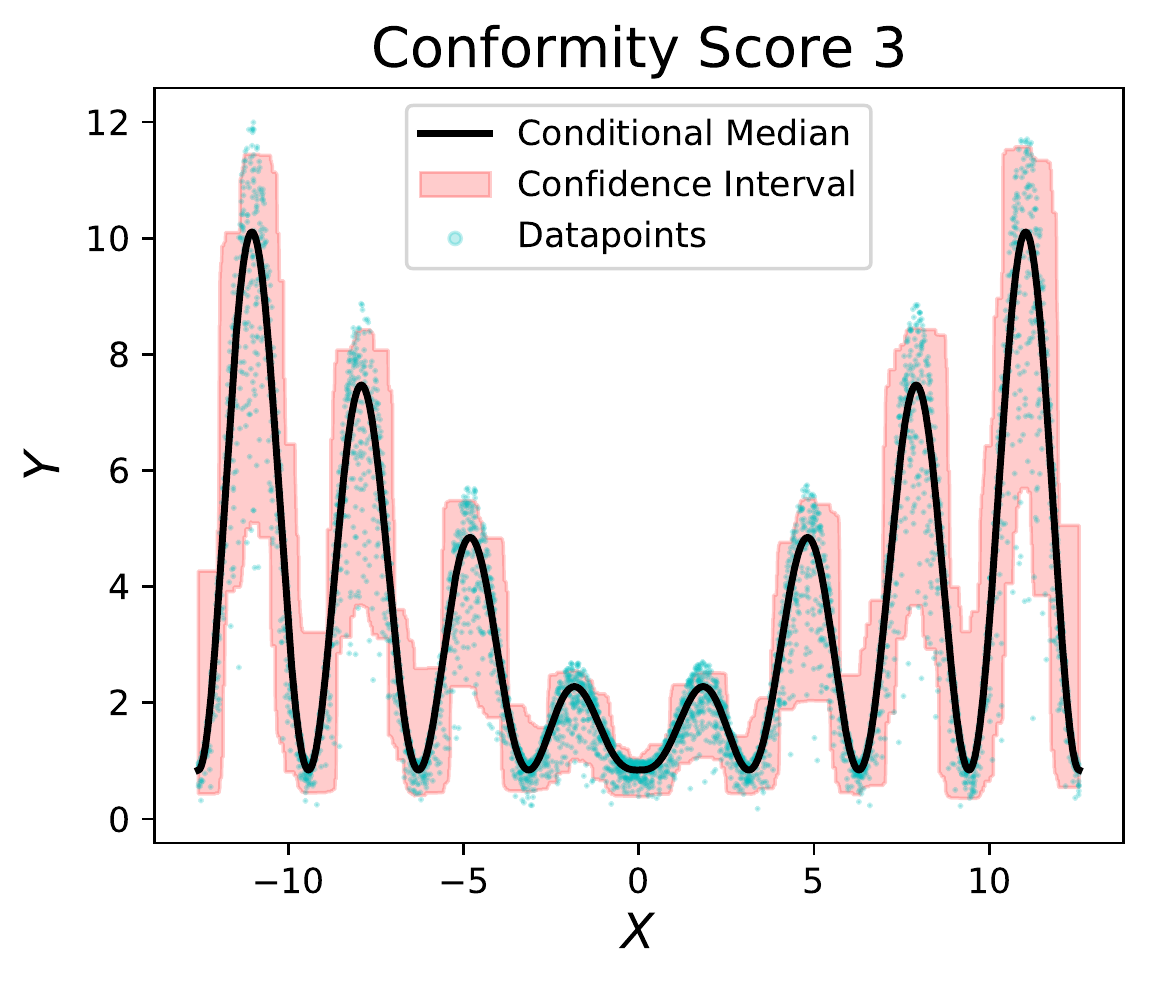}
\end{subfigure}
\begin{subfigure}{0.45\linewidth}
\centering
\captionsetup{justification=centering,margin=0.25cm}
\caption{ $f_4^{\textnormal{lo}}(X_i,Y_i) =f_4^{\textnormal{hi}}(X_i,Y_i) = \hat{F}_{Y|X=X_i}(Y_i)$. $100.0\%$ coverage; $3.66$ average width.}
\includegraphics[width=\linewidth]{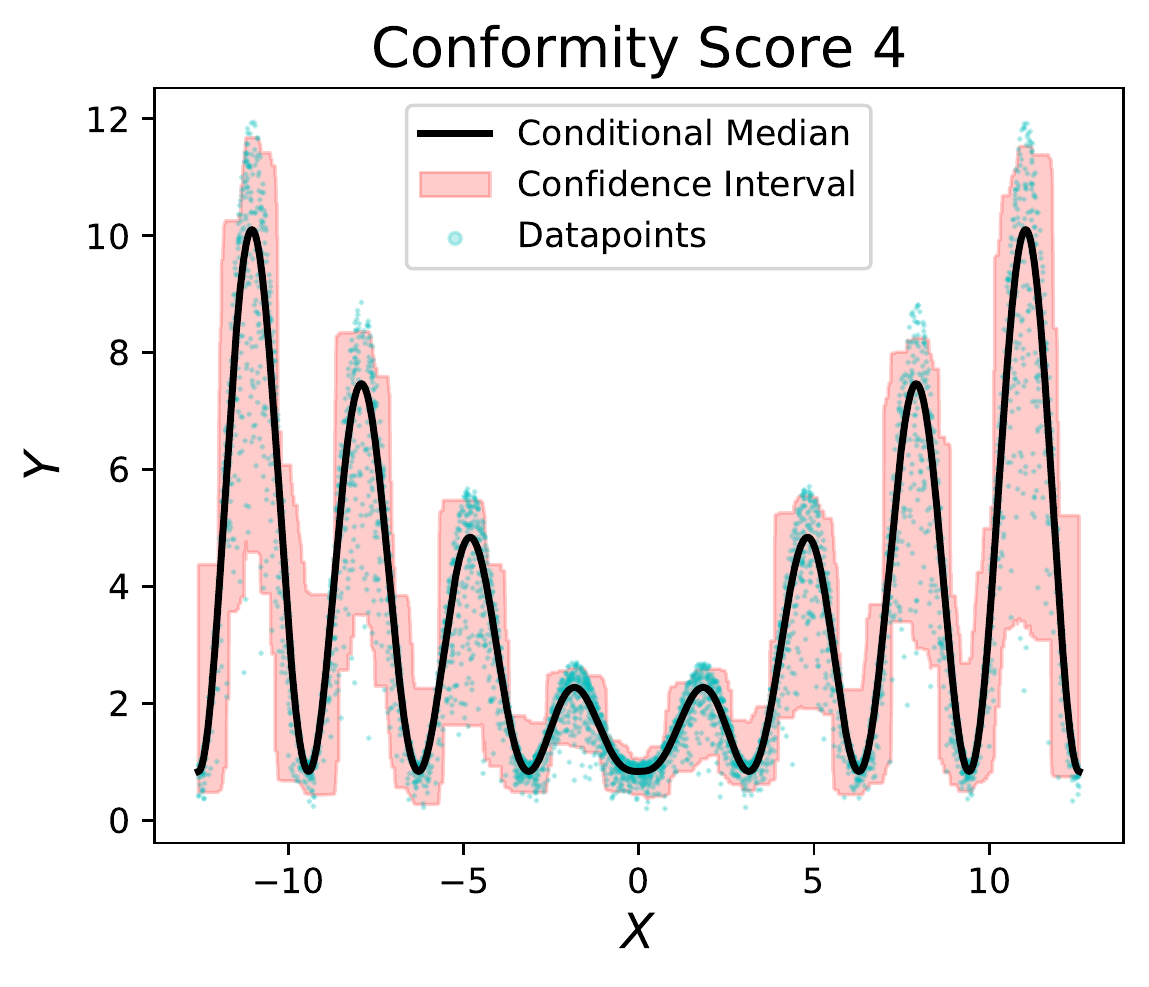}
\end{subfigure}
\caption{Confidence intervals (pink regions) from one trial for each conformity score
  on Distribution 2. Note that all scores result in a coverage well
  over $1-\alpha = 0.9$.}
\label{Figure: Confidence Interval Examples}
\end{figure}

Comparing the QRF algorithm with Algorithm \ref{Conditional Quantile Interval}, we see that while the widths are significantly lower for all three distributions, the coverage for Distribution 3 is much less than $1-\alpha$. In particular, compared against Conformity Score 3, which has the same framework but includes a constant buffer on both ends due to the calibration set, we see how much extra width Algorithm \ref{Conditional Quantile Interval} adds in the calibration step for Conformity Score 3.

Looking first at rates of coverage, we see that all scores have coverage much greater than $1-\alpha$ on Distributions 1 and 2. However, Distribution 3 is a case where all scores have near-identical rates of coverage at just about $1-\alpha$. Further investigation into the confidence intervals produced for Distribution 3 suggests that the algorithms are often failing to capture the conditional median when its absolute value is almost exactly $1$.

The minimum conditional coverage on Distributions 1 and 3 is near 0 for each conformity score. Interestingly, the scores with the worst minimum conditional coverage on Distribution 1 have the relative best minimum conditional coverage on Distribution 3, and vice versa. Scores 1, 3, and 4 have a minimum conditional coverage greater than $1-\alpha$ on Distribution 2, implying that these scores achieve point-wise conditional coverage.

Regarding interval width, Score 1 performs significantly worse on Distributions 1 and 2 than all other scores; meanwhile, the 3 other scores have roughly equal average widths. On Distribution 3, Scores 3 and 4 produce intervals with significantly less width than Scores 1 and 2.

Overall, we see that Score 1 is significantly worse than the other
scores on distributions with a wide range in conditional variance;
Scores 3 and 4 behave very similarly on all distributions and perform
slightly better than Score 2 on Distribution 3.

\pagebreak
\section{Discussion}

This paper introduced two algorithms for capturing the conditional median
of a datapoint within the distribution-free setting, as well as a particular distribution where
the performance of these algorithms was sharp. Our lower bounds prove that in the
distribution-free setting, conditional median inference is
fundamentally as difficult as prediction itself, thereby setting a
concrete limit to how well any median inference algorithm can ever perform. We also showed that any predictive algorithm can be used as a median algorithm at a different coverage level, suggesting that the two problems are near-equivalent.

\subsection{Takeaways}

A few observations may prove useful. For one, distributions such as
$P^{\delta}$ from Section \ref{Algorithm Sharpness Section} and $P_3$
from Section \ref{Simulations Section} will likely show up
again. Because each distribution is a mixture of two disjoint
distributions with roughly equal weights, it is hard to identify which
half contains the median. It is likely that similar distributions will
show up as the performance-limiting distribution for distribution-free
parameter inference. Further, the proof technique of sampling a large
finite number of datapoints and then marginalizing (Section
\ref{Median Intervals are Predictive Section}) is similar to those in
\cite{barber2020distribution} and \cite{foygel2021limits}, pointing
out to possible future use. Lastly, our results and those of
\cite{barber2020distribution} indicate that the value of conditional
parameters cannot be known with higher accuracy than the values of
future samples.

\subsection{Further Work}
\label{Further Work Section}

We hope that this paper motivates further work on conditional
parameter inference. We see three immediate potential avenues:
\begin{itemize}
\item One direction is to extend our methods to study other
  conditional parameters similar to the conditional median. For example, the smoothed conditional median, equal to the conditional median convolved with a kernel, may be easier to infer than the conditional median. This parameter would allow for smarter inference in the case of smooth distributions without making smoothness a requirement for inference. Similarly, the truncated mean and other measures of central tendency may be amenable to model-free inference and analysis, as may the conditional interquartile range and other robust measures of scale. 

\item Another direction is to get tighter bands by imposing mild shape
  constraints on the conditional median function. For instance, if we
  know that $\textrm{Median}(Y|X=x)$ is convex, then the
  results from Section \ref{Median Intervals are Predictive Section}
  no longer apply. Similarly, assuming that $\textrm{Median}(Y|X=x)$ is decreasing in
  $x$ or Lipschitz would yield intervals with vanishing widths in the
  limit of large samples. For instance, when predicting economic
  damages caused by tornadoes using wind speed as a covariate, one may
  assume that the median damage is nondecreasing as wind speed
  increases. 

\item A third subject of study is creating \textit{full conformal
    inference} methods based off of our split conformal
  algorithms. Unlike split conformal inference, the full conformal
  method does not rely on splitting the dataset into a fitting half
  and a ranking half; instead, it calculates the conformity of a
  potential datapoint $(X_{n+1},y)$ to the full dataset $\mathcal{D}$
  and includes $y$ in its confidence region only if $(X_{n+1},y)$ is
  similar enough to the observed datapoints. {The study of full
    conformal inference has grown alongside that of split conformal
    inference; the method can be seen in \cite{vovk2005algorithmic},
    \cite{shafer2008tutorial}, and \cite{lei2018distribution}.}
  Standard full conformal algorithms do not guarantee coverage of the
  conditional median; however, there may exist modifications similar
  to locally nondecreasing conformity scores that result in a full
  conformal algorithm that captures the conditional median.
\end{itemize}

\section*{Acknowledgements}

D.~M.~thanks Stanford University for supporting this research as part
of the Masters program in Statistics. 
E.~C.~was supported by Office of Naval Research grant N00014-20-12157,
by the National Science Foundation grant OAC 1934578, and by the Army
Research Office (ARO) under grant W911NF-17-1-0304.  We thank Lihua
Lei for advice on different approaches and recommended resources on
this topic, as well as the Stanford Statistics department for
listening to a preliminary version of this work. We also thank a referee for encouraging us to deepen the connection between predictive and conditional median confidence intervals.

\bibliographystyle{plainnat}
\nocite{*}
\bibliography{references.bib}

\pagebreak

\appendix
\section{Theorem Proofs}

\subsection{Proof of Theorem \ref{Conditional Median Theorem}}
\label{Conditional Median Proof}

Theorem \ref{Conditional Median Theorem} follows directly from Theorem \ref{All Predictive Algorithms}. In particular, \cite{vovk2005algorithmic} shows that Algorithm \ref{Conditional Median Interval} satisfies $(1-\alpha/2)$-Predictive. Because Algorithm \ref{Conditional Median Interval} always outputs a confidence interval, we have that Algorithm \ref{Conditional Median Interval} satisfies $(1-\alpha)$-Median by Theorem \ref{All Predictive Algorithms}.

\subsection{Proof of Theorem \ref{Conditional Quantile Theorem}}
\label{Conditional Quantile Proof}

We can prove Theorem \ref{Conditional Quantile Theorem} from the extension of Theorem \ref{All Predictive Algorithms} described in Remark \ref{All Predictive Algorithms Remark}. We know that Algorithm \ref{Conditional Quantile Interval} always outputs a confidence interval because it is the intersection of 2 confidence intervals. Define $E_{n+1}^{\textnormal{lo}}=f^{\textnormal{lo}}(X_{n+1},Y_{n+1}))$ and $E_{n+1}^{\textnormal{hi}}=f^{\textnormal{hi}}(X_{n+1},Y_{n+1}))$. We can bound the probability of $Y_{n+1}$ being at least the lower bound by $\mathbb{P}\{E_{n+1}^{\textnormal{lo}} \geq Q_{rq}^{\textrm{lo}}(E)\}$ and bound the probability of $Y_{n+1}$ being at most the upper bound by $\mathbb{P}\{E_{n+1}^{\textnormal{hi}}\leq Q_{1-s(1-q)}^{\textrm{hi}}(E)\}$.

$Q_{rq}^{\textrm{lo}}(E)$ is defined as the $rq(1+1/n_2)-1/n_2$-th quantile of $\{E_i^{\textnormal{lo}}:i\in\mathcal{I}_2\}$, which is equal to the $\lceil n_2(rq(1+1/n_2)-1/n_2) \rceil = \lceil rq(n_2+1)-1 \rceil $ smallest value of $\{E_i^{\textnormal{lo}}:i\in\mathcal{I}_2\}$. Then, because $\{E_{i}^{\textnormal{lo}}:i\in\mathcal{I}_2\}\cup\{E_{n+1}^{\textnormal{lo}}\}$ are exchangeable, as $f^{\textnormal{lo}}$ is only fit on $\{(X_i,Y_i):i\in\mathcal{I}_1\}$, we have that the distribution of $|\{E_i^{\textnormal{lo}} < E_{n+1}^{\textnormal{lo}}:i\in\mathcal{I}_2\}|$ is bounded above by the uniform distribution on $\{0,1,\ldots,n_2\}$. Therefore, \begin{align*}
\mathbb{P}\{E_{n+1}^{\textnormal{lo}} \geq Q_{rq}^{\textrm{lo}}(E)\}\geq&
\sum_{\lceil rq(n_2+1)-1 \rceil+1}^{n_2}\frac{1}{n_2+1}\\
=&\frac{n_2+1-\lceil rq(n_2+1) \rceil}{n_2+1}\\
\geq& \frac{(n_2+1)(1-rq)}{n_2+1}\\
=&1-rq.
\end{align*}

Similarly, $Q_{1-s(1-q)}^{\textrm{hi}}(E)$ is defined as the 
$(1-s(1-q))(1+1/n_2)$-th quantile of
$\{E^{\textnormal{hi}}_i:i\in\mathcal{I}_2\}$, which equals the
$\lceil n_2(1-s(1-q))(1+1/n_2)\rceil = \lceil (1-s(1-q))(n_2+1)\rceil$
smallest value of
$\{E^{\textnormal{hi}}_i:i\in\mathcal{I}_2\}$. Then, because $\{E_{i}^{\textnormal{hi}}:i\in\mathcal{I}_2\}\cup\{E_{n+1}^{\textnormal{hi}}\}$ are exchangeable, we have that the distribution of $|\{E_i^{\textnormal{hi}} \leq E_{n+1}^{\textnormal{hi}}:i\in\mathcal{I}_2\}|$ is bounded below by the uniform distribution on $\{0,1,\ldots,n_2\}$. Therefore,
\begin{align*}
\mathbb{P}\{E_{n+1}^{\textnormal{hi}}\leq Q_{1-s(1-q)}^{\textrm{hi}}(E)\}\geq&
\sum_{0}^{\lceil (1-s(1-q))(n_2+1)\rceil - 1}\frac{1}{n_2+1}\\
=& \frac{\lceil (1-s(1-q))(n_2+1)\rceil}{n_2+1}\\
\geq&1-s(1-q).
\end{align*}

Combining these two results and applying the extension of Theorem \ref{All Predictive Algorithms} tells us that Algorithm \ref{Conditional Quantile Interval} satisfies $(1-\alpha,q)$-Quantile as desired.

\subsection{Proof of Theorem \ref{Sharpness Case Theorem}}
\label{Sharpness Case Proof}

We show that given $\epsilon$, there exists $\delta$ and $N$ such that
for all $n>N$, running Algorithm \ref{Conditional Median Interval} on
$P^{\delta}$ with our chosen $\hat{\mu}$ results in a confidence
interval that contains the conditional median with probability at most
$1-\alpha+\epsilon$. Our approch is similar to that in Appendix
\ref{Conditional Median Proof}; however, we apply the inequalities in
the opposite directions and use some analysis in order to get an upper
bound as opposed to a lower bound.

First, note that $\{(X_i,Y_i):i\in\mathcal{I}_1\}$ is irrelevant to
our algorithm, as $\hat{\mu}$ is set to be the zero function. Then,
for each $i\in\mathcal{I}_2$, $E_i = |Y_i|$. Thus, $Q_{1-\alpha/2}(E)$
is the $(1-\alpha/2)(1+1/n_2)$-th empirical quantile of
$\{|Y_i|:i\in\mathcal{I}_2\}$, and our confidence interval is
$\hat{C}_n(X_{n+1}) = [-Q_{1-\alpha/2}(E),Q_{1-\alpha/2}(E)]$.
Because the parameter we want to cover is
$\textrm{Median}(Y|X=X_{n+1})=X_{n+1}$, 
$$\textrm{Median}(Y_{n+1}|X_{n+1})\in \hat{C}_n(X_{n+1})\textrm{ if and only if }|X_{n+1}|\leq Q_{1-\alpha/2}(\{|Y_i|:i\in\mathcal{I}_2\}).$$

Define $M_R = \#\{i\in\mathcal{I}_2:|X_i|\geq |X_{n+1}|\}$ and
$M_E = \#\{i\in\mathcal{I}_2:|Y_i|\geq |X_{n+1}|\}$. Now, note that
$Q_{1-\alpha/2}(\{|Y_i|:i\in\mathcal{I}_2\})$ is the
$\lceil n_2 (1-\alpha/2)(1+1/n_2)\rceil= \lceil
(1-\alpha/2)(n_2+1)\rceil$
smallest value of $\{|Y_i|:i\in\mathcal{I}_2\}$, which equals the
$n_2+1-\lceil (1-\alpha/2)(n_2+1)\rceil$ largest value. Letting
$m = n_2+1-\lceil (1-\alpha/2)(n_2+1)\rceil$,
$$|X_{n+1}|\leq Q_{1-\alpha/2}(\{|Y_i|:i\in\mathcal{I}_2\})\textrm{ if and only if }M_E\geq m.$$
We now build up the following two lemmas.

\begin{lemma}
\label{Sharpness Comparison Lemma}
For all $0\leq M\leq n_2$, 
$$\mathbb{P}\big\{M_R = M \big\}= \frac{1}{n_2+1}.$$
\end{lemma}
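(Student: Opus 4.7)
The plan is to invoke exchangeability of $\{|X_i|\}_{i \in \mathcal{I}_2 \cup \{n+1\}}$. The key observation is that under $P^{\delta}$ we have $X_i \sim \textrm{Unif}[-0.5, 0.5]$ for every $i$, so the $|X_i|$ are i.i.d.\ continuous random variables (uniform on $[0, 0.5]$). In particular, no two values among $\{|X_i| : i \in \mathcal{I}_2 \cup \{n+1\}\}$ coincide almost surely, so the ranking of these $n_2+1$ values is a well-defined permutation.

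First I would note that the random index set $\mathcal{I}_2$ is independent of the data $(X_1, \ldots, X_n, X_{n+1})$, so conditioning on $\mathcal{I}_2$ does not change the fact that $\{X_i : i \in \mathcal{I}_2\} \cup \{X_{n+1}\}$ is a collection of $n_2+1$ i.i.d.\ draws from $P^{\delta}_X$. Consequently, the collection $\{|X_i| : i \in \mathcal{I}_2 \cup \{n+1\}\}$ is exchangeable.

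By exchangeability together with the absence of ties, the rank of $|X_{n+1}|$ among the $n_2+1$ values (counted from the smallest) is uniformly distributed on $\{1, 2, \ldots, n_2+1\}$. The quantity $M_R = \#\{i \in \mathcal{I}_2 : |X_i| \geq |X_{n+1}|\}$ is exactly $n_2 + 1 - \textrm{rank}(|X_{n+1}|)$, so $M_R$ is uniform on $\{0, 1, \ldots, n_2\}$, giving $\mathbb{P}\{M_R = M\} = 1/(n_2+1)$ for each valid $M$.

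There is no real obstacle here since the claim is a routine exchangeability argument; the only thing to be careful about is to record explicitly that $\mathcal{I}_2$ may be treated as fixed (by independence from the data) and that the continuity of $|X|$ under $P^{\delta}_X$ rules out ties, so that "rank" is unambiguous.
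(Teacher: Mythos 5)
Your proof is correct and takes essentially the same approach as the paper: both rely on the fact that $\{|X_i| : i \in \mathcal{I}_2\} \cup \{|X_{n+1}|\}$ are i.i.d.\ with an atomless distribution on $[0,0.5]$, which makes the rank of $|X_{n+1}|$ (and hence $M_R$) uniform. You simply spell out the rank-to-$M_R$ translation and the role of $\mathcal{I}_2$ more explicitly than the paper does.
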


\begin{proof}
This holds from the fact that the values $\{|X_i|:i\in\mathcal{I}_2\}\cup\{|X_{n+1}|\}$ are i.i.d.~and have a distribution over $[0,0.5]$ with no point masses. As a result, $M_R$ is uniformly distributed over $\{0,1,\ldots,n_2\}$.
\end{proof}

\begin{lemma}
\label{Sharpness Probability Lemma}
$M_E|M_R\sim\textnormal{Binom}(M_R,0.5+\delta)$. 
\end{lemma}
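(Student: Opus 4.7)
The plan is to exploit the explicit product structure of $P^\delta$: we can write $Y_i = X_i \cdot B_i$ where $B_i \stackrel{\text{i.i.d.}}{\sim} \text{Bernoulli}(0.5+\delta)$ is independent of the $X_j$'s and of the other $B_j$'s. Taking absolute values, $|Y_i| = |X_i|\, B_i$, so $|Y_i| \geq |X_{n+1}|$ is equivalent to the conjunction $\{B_i = 1\} \cap \{|X_i| \geq |X_{n+1}|\}$, almost surely. The ``almost surely'' qualifier comes from the fact that $X_{n+1} \sim \text{Unif}[-0.5,0.5]$ is continuous, so $|X_{n+1}| > 0$ with probability one, ruling out the degenerate case where a $B_i = 0$ datapoint could satisfy $|Y_i| \geq |X_{n+1}|$.

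The key rewriting is then
\[
M_E \;=\; \#\bigl\{ i \in \mathcal{I}_2 : |X_i| \geq |X_{n+1}|,\ B_i = 1 \bigr\} \;=\; \sum_{i \in \mathcal{S}} B_i,
\]
where $\mathcal{S} := \{i \in \mathcal{I}_2 : |X_i| \geq |X_{n+1}|\}$ is the (random) set of $M_R$ indices from $\mathcal{I}_2$ whose $|X_i|$ dominates $|X_{n+1}|$. I would then condition on the entire $X$-configuration $\mathcal{X} := \bigl(X_{n+1},\{X_i\}_{i \in \mathcal{I}_2}\bigr)$, which determines $\mathcal{S}$ and in particular $M_R = |\mathcal{S}|$. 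Because the $B_i$'s are independent of $\mathcal{X}$ and i.i.d.\ $\text{Bernoulli}(0.5+\delta)$, the conditional distribution of $\sum_{i \in \mathcal{S}} B_i$ given $\mathcal{X}$ is $\text{Binom}(|\mathcal{S}|, 0.5+\delta) = \text{Binom}(M_R, 0.5+\delta)$. Averaging over the fibers of $\mathcal{X}$ that share a common value of $M_R$ then yields $M_E \mid M_R \sim \text{Binom}(M_R, 0.5+\delta)$, as claimed.

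There is no real obstacle here beyond a bookkeeping point: one must be careful that the conditioning is done on a $\sigma$-algebra rich enough to pin down $\mathcal{S}$ (equivalently, $M_R$ together with the identity of the relevant indices), while remaining independent of the Bernoulli coins $\{B_i\}_{i \in \mathcal{I}_2}$. Conditioning on $\mathcal{X}$ achieves exactly this, and the distributional statement then collapses to the standard fact that a fixed-size subcollection of i.i.d.\ Bernoullis is itself a binomial.
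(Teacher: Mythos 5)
Your proof is correct and follows essentially the same route as the paper's: both reduce $M_E$ to a sum of i.i.d.~$\textrm{Bernoulli}(0.5+\delta)$ variables over the random index set $\{i\in\mathcal{I}_2:|X_i|\geq|X_{n+1}|\}$ (the paper writes these Bernoullis as $\mathbf{1}[|Y_i|=|X_i|]$, you as the $B_i$'s from the generative model) and then use their independence from the $X$'s to conclude. Your version is slightly more explicit about the conditioning on the full $X$-configuration that makes the final ``binomial given $M_R$'' claim airtight, but the underlying argument is identical.
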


\begin{proof}
First, note that for all $i\in\mathcal{I}_2$, if $|X_i|<|X_{n+1}|$, then $\mathbb{P}\{|Y_i|\geq |X_{n+1}|\}=0$. This is because if $|X_i|<|X_{n+1}|$, then $|Y_i|\leq |X_i|< |X_{n+1}|$. Additionally, if $|X_i|\geq|X_{n+1}|$, then $\mathbb{P}\{|Y_i|\geq |X_{n+1}|\}=0.5+\delta$. This is due to the fact that if $|X_i|\geq |X_{n+1}|$, we have that $|Y_i|=0$ with probability $0.5-\delta$ and $|Y_i|=|X_i|$ with probability $0.5+\delta$. With probability $1$, $|X_{n+1}|>0$. Therefore, $|Y_i|\geq |X_{n+1}|$ if and only if $|Y_i|=|X_i|$, which occurs with probability $0.5+\delta$.

Furthermore, the events $\{|Y_i|=|X_{i}|\}$ are mutually independent for
all $i\in\mathcal{I}_2$ (the pairs $(X_i, Y_i)$ are
i.i.d.). Then, since 
$$M_E = \sum_{i\in\mathcal{I}_2} \textbf{1}[|Y_i|\geq |X_{n+1}|] = \sum_{i\in\mathcal{I}_2} \textbf{1}[|X_i|\geq |X_{n+1}|]\textbf{1}[|Y_i|= |X_{i}|]$$
and each term $\textbf{1}[|Y_i|= |X_{i}|]$ is i.i.d.~Bernoulli with
probability $0.5+\delta$, and
$M_R=\sum_{i\in\mathcal{I}_2} \textbf{1}[|X_i|\geq |X_{n+1}|]$, the
result follows.
\end{proof}

We now apply our two lemmas:
\begin{align*}
\mathbb{P}\big\{M_E \geq m \big\} 
=& \sum_{j=0}^{n_2} \mathbb{P}\big\{ M_R = j \big\}\mathbb{P}\big\{M_E \geq m | M_R = j \big\} \\
=& \frac{1}{n_2+1}\sum_{j=0}^{n_2} \sum_{k=m}^j \mathbb{P}\big\{M_E \geq m | M_R = j \big\} \\
=& \frac{1}{n_2+1}\sum_{j=0}^{n_2} \sum_{k=m}^j \dbinom{j}{k}(0.5+\delta)^k(0.5-\delta)^{j-k};
\end{align*}
the second equality follows from Lemma \ref{Sharpness Comparison
  Lemma}, and the last from Lemma \ref{Sharpness Probability
  Lemma}. Now, by applying the same train of logic as used in
Appendices \ref{Conditional Median Proof} and \ref{Conditional
  Quantile Proof}, 
\begin{align*}
\mathbb{P}\big\{M_E \geq m \big\} 
=& \frac{1}{n_2+1}\sum_{j=0}^{n_2} \Big(1- \sum_{k=0}^{m-1} \dbinom{j}{k}(0.5+\delta)^k(0.5-\delta)^{j-k} \Big) \\
=& 1-\frac{1}{n_2+1}\sum_{j=0}^{n_2} \sum_{k=0}^{m-1} \dbinom{j}{k}(0.5+\delta)^k(0.5-\delta)^{j-k} \\
=& 1-\frac{1}{n_2+1}\sum_{k=0}^{m-1} \Big(\frac{0.5+\delta}{0.5-\delta} \Big)^k\sum_{j=0}^{n_2}  \dbinom{j}{k}(0.5-\delta)^{j} \\
=& 1-\frac{1}{n_2+1}\sum_{k=0}^{m-1}
   \Big(\frac{0.5+\delta}{0.5-\delta}
   \Big)^k\Big(\frac{(0.5-\delta)^k}{(0.5+\delta)^{k+1}} -
   \sum_{j=n_2+1}^{\infty}  \dbinom{j}{k}(0.5-\delta)^{j}\Big), 
\end{align*}
where the last equality is from evaluating the generating function $G(\binom{t}{k}; z) = \sum_{t=0}^{\infty} \binom{t}{k} z^{t}$ at $z = 0.5-\delta$.

Finally, in order to bring this into a coherent bound, we expand the equation to bring out the $1-\alpha$ term and isolate the remainder, which we can then show goes to $0$:
\begin{align*}
\mathbb{P}\big\{M_E \geq m \big\} 
=& 1-\frac{1}{n_2+1}\sum_{k=0}^{m-1} \Big(\frac{1}{0.5+\delta} - \Big(\frac{0.5+\delta}{0.5-\delta} \Big)^k\sum_{j=n_2+1}^{\infty}  \dbinom{j}{k}(0.5-\delta)^{j}\Big)\\
=& 1-\frac{m}{n_2+1}\cdot \frac{1}{0.5+\delta}+\sum_{k=0}^{m-1} \Big(\frac{0.5+\delta}{0.5-\delta} \Big)^k\sum_{j=n_2+1}^{\infty}  \dbinom{j}{k}(0.5-\delta)^{j}\\
\leq& 1-\frac{m}{n_2+1}\cdot \frac{1}{0.5+\delta}+\sum_{k=0}^{m-1} \Big(\frac{0.5+\delta}{0.5-\delta} \Big)^k \dbinom{n_2+1}{k}(0.5-\delta)^{n_2+1} \frac{1}{1-(0.5-\delta)\frac{n_2+1}{n_2+1-k}},
\end{align*}
where the inequality arises from upper bounding the summation
$\sum_{j=n_2+1}^{\infty} \binom{j}{k}(0.5-\delta)^{j}$ by
$$\binom{n_2+1}{k}(0.5-\delta)^{n_2+1}\sum_{j=0}^{\infty}
\left(\frac{n_2+1}{n_2+1-k}(0.5-\delta)\right)^{j}$$
using the maximum ratio of consecutive terms. Applying
$m\leq (n_2+1)\alpha/2$ twice gives 
\begin{align*}
\mathbb{P}\big\{M_E \geq m \big\} 
=& 1-\frac{m}{n_2+1}\cdot \frac{1}{0.5+\delta}+\Big(\frac{0.5+\delta}{0.5-\delta} \Big)^{m-1}\frac{1}{1-(0.5-\delta)\frac{n_2+1}{n_2+1-m}}\sum_{k=0}^{m-1}  \dbinom{n_2+1}{k}(0.5-\delta)^{n_2+1}\\
\leq& 1-\frac{m}{n_2+1}\cdot \frac{1}{0.5+\delta}+\Big(\frac{0.5+\delta}{0.5-\delta} \Big)^{m-1}\Big(2+\frac{\alpha}{1-\alpha}\Big)\sum_{k=0}^{m-1}  \dbinom{n_2+1}{k}(0.5-\delta)^{n_2+1} \\
\leq& 1-\alpha+ \frac{2\alpha\delta}{1+2\delta}+\Big(2+\frac{\alpha}{1-\alpha}\Big)\Big(\frac{0.5+\delta}{0.5-\delta} \Big)^{(n_2+1)\alpha/2}\cdot \frac{\sum_{k=0}^{\lfloor (n_2+1)\alpha/2\rfloor}  \dbinom{n_2+1}{k}}{2^{n_2+1}}.
\end{align*}
Because $\alpha<1$, 
$$\frac{\sum_{k=0}^{\lfloor (n_2+1)\alpha/2\rfloor}
  \binom{n_2+1}{k}}{2^{n_2+1}}\to 0 \quad \text{as} \quad
n_2\to\infty;$$
this is due to the fact that
$$\frac{\sum_{k=0}^{ n_2+1} \binom{n_2+1}{k}}{2^{n_2+1}}=1$$ and that
the standard deviation of $\textrm{Binom}(n_2,0.5)$ is
$\mathcal{O}(\sqrt{n_2})$, meaning that
$$\frac{\sum_{k=\lfloor (n_2+1)\alpha/2\rfloor+1}^{n_2+1-\lfloor
    (n_2+1)\alpha/2\rfloor} \binom{n_2+1}{k}}{2^{n_2+1}}\to 1.$$
Furthermore, as $\textrm{Binom}(n_2,0.5)$ approaches a normal
distribution as $n_2\to\infty$ and $\Phi(-c\sqrt{n_2})$ is
$\mathcal{O}(d^{-n_2})$ for some $d>1$, for small enough $\delta$,
$$\left(\frac{0.5+\delta}{0.5-\delta} \right)^{(n_2+1)\alpha/2}\, 
\frac{\sum_{k=0}^{\lfloor (n_2+1)\alpha/2\rfloor}
  \binom{n_2+1}{k}}{2^{n_2+1}}\to 0 \quad \text{as} \quad
n_2\to\infty.$$
Thus, we can pick $D$ and $N$ such that for all $\delta<D$ and
$n\geq N$,
$$\left(\frac{0.5+\delta}{0.5-\delta} \right)^{(n_2+1)\alpha/2}\,
\frac{\sum_{k=0}^{\lfloor (n_2+1)\alpha/2\rfloor}
  \binom{n_2+1}{k}}{2^{n_2+1}}\leq \epsilon/2,$$
noting that $n_2 = n/2$. Then, setting
$\delta = \min\{\frac{\epsilon}{4\alpha-2\epsilon},D\}$ and 
$\frac{2\alpha\delta}{1+2\delta}\leq \epsilon/2$ yields 
$$\mathbb{P}\big\{M_E \geq m \big\}\leq 1-\alpha+\epsilon/2+\epsilon/2=1-\alpha+\epsilon.$$
This says that the probability $\mathbb{P}\{M_E \geq m \}$ of the
confidence interval containing the conditional median is at most
$1-\alpha+\epsilon$.

\subsection{Proof of Theorem \ref{Crazy Regression Coverage Theorem}}
\label{Crazy Regression Coverage Proof}

We show that given $\epsilon$ there exists $c$, $N$, and $n_1+n_2=n$ for all $n>N$
  such that running Algorithm \ref{Conditional Median Interval} on $n>N$ datapoints from an arbitrary distribution $P$ with regression function $\hat{\mu}_c$ and split sizes $n_1 + n_2 = n$ results in a finite confidence interval that contains the conditional median with probability at least $1-\alpha/2 - \epsilon$. 
  
  For each $x$ in the
  support of $P$, define $m(x) = \textrm{Median}(Y|X=x)$ and recall that $M = \displaystyle \max_{i\in\mathcal{I}_1} |Y_i|$.  We begin
  with two lemmas.

\begin{lemma}
\label{Crazy Regression First Lemma}
For all $i\in\mathcal{I}_2$,
$\mathbb{P}\{|Y_i|\leq M\} \geq 1-\frac{1}{n_1 + 1}$.
\end{lemma}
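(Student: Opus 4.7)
The plan is to exploit exchangeability of the collection $\{|Y_j| : j \in \mathcal{I}_1 \cup \{i\}\}$. For any fixed $i \in \mathcal{I}_2$ and any fixed split of $\{1,\ldots,n\}$ into $\mathcal{I}_1,\mathcal{I}_2$, the random variables $(Y_j)_{j \in \mathcal{I}_1 \cup \{i\}}$ are i.i.d.\ draws from the marginal $P_Y$, and hence so are their absolute values. In particular, the collection of $n_1 + 1$ nonnegative random variables $\{|Y_j| : j \in \mathcal{I}_1\} \cup \{|Y_i|\}$ is exchangeable.

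The key observation is that the event $\{|Y_i| > M\}$ is precisely the event that $|Y_i|$ is the strict maximum of this collection of $n_1 + 1$ values, since $M = \max_{j \in \mathcal{I}_1} |Y_j|$. At most one member of the collection can be the strict maximum, so the events $\{|Y_j| > \max_{k \neq j} |Y_k|\}$ for $j \in \mathcal{I}_1 \cup \{i\}$ are pairwise disjoint. By exchangeability these $n_1 + 1$ events all have the same probability, so each has probability at most $\frac{1}{n_1 + 1}$. Applied to $j = i$, this gives
$$\mathbb{P}\{|Y_i| > M\} \leq \frac{1}{n_1 + 1},$$
from which the claimed bound $\mathbb{P}\{|Y_i| \leq M\} \geq 1 - \frac{1}{n_1 + 1}$ follows by taking complements.

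There is no real obstacle to this argument; the only subtlety worth noting is that one must take the complement event to be strict inequality so that the ``strict maximum'' events are disjoint, which is why the bound involves $1/(n_1+1)$ regardless of whether the distribution of $|Y|$ has ties. The statement holds uniformly over all distributions $P$, which is what subsequent steps of the proof of Theorem \ref{Crazy Regression Coverage Theorem} will need when they condition on $\mathcal{I}_1$ and compare $|Y_{n+1}|$ or a test residual to the randomized prediction $\hat{\mu}_c(X_{n+1}) \sim \mathcal{N}(0,(cM)^2)$.
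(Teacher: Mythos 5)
Your proof is correct and takes essentially the same route as the paper: both argue from exchangeability of $\{|Y_j| : j \in \mathcal{I}_1\} \cup \{|Y_i|\}$ that the probability of $|Y_i|$ being the unique (strict) maximum is at most $\frac{1}{n_1+1}$, and then take complements. Your write-up is somewhat more explicit about why the strict-maximum events are disjoint and why this handles ties, but the substance is identical.
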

\begin{proof}
This results from the fact that $|Y_i|$ is exchangeable with $|Y_j|$ for all $j\in\mathcal{I}_1$; thus, the probability that $|Y_i|$ is the unique maximum of the set $\{Y_j:j\in\mathcal{I}_1\cup\{i\}\}$ is bounded above by $\frac{1}{n_1+1}$. Taking the complement yields the desired result.
\end{proof}
\begin{lemma}
\label{Crazy Regression Second Lemma}
For all $i\in\mathcal{I}_2\cup\{n+1\}$,
$\mathbb{P}\{|m(X_i)|\leq M\}\geq 1-\frac{2}{n_1+1}$.
\end{lemma}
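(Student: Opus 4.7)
\subsection*{Proof proposal for Lemma \ref{Crazy Regression Second Lemma}}

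The plan is to reduce the event $\{|m(X_i)| > M\}$ to an event about the rank of $|m(X_i)|$ among the conditional medians $\{|m(X_j)| : j \in \mathcal{I}_1 \cup \{i\}\}$ and then invoke independence and the one-sided median property on each ``correct'' side. The starting point is that, because the split is random and the original datapoints are i.i.d., the $n_1+1$ pairs $(X_j, Y_j)$ for $j \in \mathcal{I}_1 \cup \{i\}$ are exchangeable; hence the rank $R$ of $|m(X_i)|$ from the top among $\{|m(X_l)|: l \in \mathcal{I}_1 \cup \{i\}\}$ is uniform on $\{1,\ldots,n_1+1\}$ (breaking ties at random if necessary).

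The key observation is a sign-adapted use of the median property. For any $j \in \mathcal{I}_1$ with $|m(X_j)| \ge |m(X_i)|$, exactly one of the following holds: either $m(X_j) \ge 0$, in which case $m(X_j) = |m(X_j)| \ge |m(X_i)|$ and $\mathbb{P}\{Y_j \ge m(X_j)\mid X_j\} \ge 1/2$, so $|Y_j| \ge m(X_j) \ge |m(X_i)|$ with conditional probability at least $1/2$; or $m(X_j) < 0$, in which case $m(X_j) \le -|m(X_i)|$ and $\mathbb{P}\{Y_j \le m(X_j)\mid X_j\}\ge 1/2$, so $|Y_j|\ge -m(X_j) \ge |m(X_i)|$ with conditional probability at least $1/2$. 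Either way, each such $j$ contributes, conditionally and independently, a probability at least $1/2$ that $|Y_j|\ge |m(X_i)|$, in which case $M\ge |m(X_i)|$.

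Writing $J = \{j \in \mathcal{I}_1 : |m(X_j)| \ge |m(X_i)|\}$ and $K = |J|$, conditional independence of the $Y_j$'s given $(X_l)_{l \in \mathcal{I}_1 \cup \{i\}}$ yields
\begin{align*}
\mathbb{P}\bigl\{|m(X_i)|>M \,\big|\, (X_l)\bigr\}
&\le \prod_{j\in J}\mathbb{P}\bigl\{|Y_j|<|m(X_i)|\,\big|\, X_j\bigr\} \le (1/2)^{K}.
\end{align*}
Exchangeability gives $K = R-1$ with $R$ uniform on $\{1,\ldots,n_1+1\}$, so
\begin{align*}
\mathbb{P}\bigl\{|m(X_i)|>M\bigr\}
&\le \mathbb{E}\bigl[(1/2)^{K}\bigr]
= \frac{1}{n_1+1}\sum_{k=0}^{n_1}(1/2)^{k}
\le \frac{2}{n_1+1},
\end{align*}
which gives the claim after subtracting from $1$.

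The main obstacle is the two-sided nature of the bound $|m(X_i)|\le M$: a naive split into the events $\{m(X_i)>M\}$ and $\{m(X_i)<-M\}$ bounds each side by something like $2/(n_1+1)$ and loses a factor of two. Working directly with absolute values and letting the sign of $m(X_j)$ dictate which side of the median inequality to invoke is what preserves the clean $2/(n_1+1)$ bound. A secondary subtlety is handling possible ties in the $|m(X_j)|$'s, which can be dealt with by a random tie-breaking that does not affect the exchangeability-based rank argument.
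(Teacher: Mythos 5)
Your proof is correct and follows essentially the same route as the paper: both establish that the count of $j\in\mathcal{I}_1$ with $|m(X_j)|\ge|m(X_i)|$ is (at least) uniform over $\{0,\ldots,n_1\}$ by exchangeability, then use the median property (with the sign of $m(X_j)$ choosing which one-sided inequality to invoke) plus conditional independence to bound the failure probability by $\mathbb{E}[2^{-K}]\le 2/(n_1+1)$. You are slightly more explicit than the paper about why $\mathbb{P}\{|Y_j|\ge|m(X_j)|\mid X_j\}\ge 1/2$ after taking absolute values, but the argument is the same.
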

\begin{proof}
  Note that $|m(X_i)|$ is exchangeable with $|m(X_j)|$ for all
  $j\in\mathcal{I}_1$. Letting
  $M_R = \#\{|m(X_j)|\geq |m(X_i)|: j\in\mathcal{I}_1\}$,
  exchangeability gives that the CDF of $M_R$ is bounded below by the
  CDF of the uniform distribution over $\{0,1,\ldots,n_1\}$.  For each
  $j\in\mathcal{I}_1$, the event $\{|Y_j|\geq |m(X_j)|\}$ occurs with
  probability at least $1/2$ by definition of the median; moreover,
  these events are mutually independent. 
  {Therefore, if we condition on $M_R$, we have that $$\mathbb{P}\{|m(X_i)| > \displaystyle \max_{j\in\mathcal{I}_1}
  |Y_j|\big| M_R = k\}\leq\prod_{\substack{j\in\mathcal{I}_1 \\ |m(X_j)|\geq |m(X_i)|}} \mathbb{P}\{|Y_j| < |m(X_j)|\}  \leq 2^{-k}. $$}
    
    Putting this
  together, we see that
\begin{align*}
    \mathbb{P}\{|m(X_i)| > \max_{j\in\mathcal{I}_1} |Y_j|\} 
    =& \sum_{k=0}^{n_1} \mathbb{P}\{M_R = k\}\mathbb{P}\{|m(X_i)| > \max_{j\in\mathcal{I}_1} |Y_j| \big| M_R = k\} \\
    \leq& \sum_{k=0}^{n_1} \mathbb{P}\{M_R = k\} \frac{1}{2^k} \\
    \leq& \frac{1}{n_1 + 1}\sum_{k=0}^{n_1} \frac{1}{2^k} \\ 
    \leq& \frac{2}{n_1 + 1}.
\end{align*}
Taking the complement yields the desired result.
\end{proof}

Let $A$ be the event
$\{|Y_i|\leq M\textrm{ for all }i\in\mathcal{I}_2\textrm{ and
}|m(X_i)|\leq M\textrm{ for all }i\in\mathcal{I}_2\cup\{n+1\}\}$.  By
Lemmas \ref{Crazy Regression First Lemma} and \ref{Crazy Regression
  Second Lemma},
$\mathbb{P}\{A\}\geq 1 - \displaystyle\frac{3n_2 + 2}{n_1 +
  1}$. Select $N = \displaystyle\Big\lfloor\frac{12/\alpha + 10}{\epsilon}\Big\rfloor + \lfloor 2/\alpha\rfloor + 1$, and for all $n> N$, set $n_2 = \lfloor 2/\alpha\rfloor + 1$ and $n_1 = n - n_2$. {As a result, we have that $1/n_2 < \alpha/2$ and $\displaystyle\frac{3n_2 + 2}{n_1 + 1} < \epsilon/2$, so $\mathbb{P}\{A\}\geq 1-\epsilon/2$.}

Next, let $B$ be the event
$\displaystyle\{|\hat{\mu}_c(X_{i_1}) - \hat{\mu}_c(X_{i_2})| > 2M
\textrm{ for all }i_1\neq i_2\in\mathcal{I}_2\cup\{n+1\}\}$. Note that
$\displaystyle\lim_{c\to\infty}\mathbb{P}\{B\} = 1$ by definition of
$\hat{\mu}_c$. Select $c$ such that
$\mathbb{P}\{B\} \geq 1 - \epsilon/2$. By the union bound,
$\mathbb{P}\{A\cap B\} \geq 1-\epsilon$.

\begin{lemma}
\label{Crazy Regression Third Lemma}
{On the event $A\cap B$}, for all $i\in\mathcal{I}_2$,
$$|m(X_i) - \hat{\mu}_c(X_i)| \geq |m(X_{n+1}) - \hat{\mu}_c(X_{n+1})| \quad \text{if and only if} \quad |Y_i - \hat{\mu}_c(X_i)| \geq |m(X_{n+1}) - \hat{\mu}_c(X_{n+1})|.$$
\end{lemma}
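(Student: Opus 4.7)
The plan is to reduce the biconditional to a disjointness-of-intervals statement on the real line. Fix $i \in \mathcal{I}_2$, and set $u := \hat{\mu}_c(X_i)$, $v := \hat{\mu}_c(X_{n+1})$, and $T := |m(X_{n+1}) - v|$. On event $A$, each of $Y_i$, $m(X_i)$, and $m(X_{n+1})$ has absolute value at most $M$, so applying the forward and reverse triangle inequalities to any real $a$ with $|a| \leq M$ yields
\[
|u| - M \;\leq\; |u| - |a| \;\leq\; |a - u| \;\leq\; |u| + |a| \;\leq\; |u| + M.
\]
Consequently both $|Y_i - u|$ and $|m(X_i) - u|$ lie in $I_i := [|u|-M,\,|u|+M]$, while $T \in I_{n+1} := [|v|-M,\,|v|+M]$.

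If $I_i$ and $I_{n+1}$ are disjoint, the threshold $T$ sits strictly on one side of every element of $I_i$, so the two inequalities $|m(X_i) - u| \geq T$ and $|Y_i - u| \geq T$ either both hold (when $T$ lies below $I_i$) or both fail (when $T$ lies above $I_i$). This is exactly the biconditional in the lemma. Disjointness of $I_i$ and $I_{n+1}$ is equivalent to $\bigl||u| - |v|\bigr| > 2M$, and this is the property we need to extract from event $B$.

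The main obstacle is that the literal definition of $B$ only supplies $|u - v| > 2M$, which is strictly weaker than $\bigl||u| - |v|\bigr| > 2M$ when $u$ and $v$ have opposite signs (e.g.\ $u = 5M$, $v = -5M$ satisfies the former but violates the latter). The clean fix is to interpret or refine $B$ as the marginally stronger event
\[
B' := \Bigl\{\bigl||\hat{\mu}_c(X_{i_1})| - |\hat{\mu}_c(X_{i_2})|\bigr| > 2M \text{ for all distinct } i_1, i_2 \in \mathcal{I}_2 \cup \{n+1\}\Bigr\},
\]
which slots into the proof of Theorem \ref{Crazy Regression Coverage Theorem} in place of $B$ without any downstream effects. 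Because the predictions $\hat{\mu}_c(X_j)$ are i.i.d.\ $\mathcal{N}(0,(cM)^2)$, each pairwise constraint requires both Gaussians $\hat{\mu}_c(X_{i_1}) \pm \hat{\mu}_c(X_{i_2})$ to avoid an interval of width $4M$, and a union bound over pairs keeps $\mathbb{P}\{B'\} \geq 1 - \epsilon/2$ for $c$ sufficiently large. Granting $B'$, the equivalence asserted by the lemma follows at once from the disjointness argument above.
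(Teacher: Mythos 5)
Your proof is essentially the paper's argument, but you have caught a genuine flaw in it. The paper places $|m(X_i) - \hat{\mu}_c(X_i)|$, $|Y_i - \hat{\mu}_c(X_i)|$ in $[\,|\hat{\mu}_c(X_i)|-M,\,|\hat{\mu}_c(X_i)|+M\,]$ and $|m(X_{n+1})-\hat{\mu}_c(X_{n+1})|$ in $[\,|\hat{\mu}_c(X_{n+1})|-M,\,|\hat{\mu}_c(X_{n+1})|+M\,]$, then invokes event $B$ to conclude these intervals are disjoint because "their centers are at a distance greater than $2M$.'' But the centers here are $|\hat{\mu}_c(X_i)|$ and $|\hat{\mu}_c(X_{n+1})|$, not $\hat{\mu}_c(X_i)$ and $\hat{\mu}_c(X_{n+1})$, and $|u - v| > 2M$ does not give $\bigl||u| - |v|\bigr| > 2M$. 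Your $u = 5M$, $v = -5M$ example shows the two length-$2M$ intervals can in fact coincide on $A\cap B$ as literally defined, and plugging in, say, $m(X_i) = M$, $Y_i = -M$, $m(X_{n+1}) = 0$ then violates the biconditional outright. So the lemma as written is not merely under-proved; it is false with the paper's $B$.

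Your repair is exactly right. Replacing $B$ with $B' := \{\bigl||\hat{\mu}_c(X_{i_1})| - |\hat{\mu}_c(X_{i_2})|\bigr| > 2M\text{ for all distinct }i_1, i_2 \in \mathcal{I}_2\cup\{n+1\}\}$ gives precisely the disjointness needed, and since $\bigl||a|-|b|\bigr| \leq |a-b|$ we have $B' \subseteq B$, so $B'$ slots into the rest of the theorem's proof unchanged. The probability bound also survives: using $\bigl||a|-|b|\bigr| = \min(|a-b|, |a+b|)$, each pairwise constraint in $B'$ amounts to both centered Gaussians $\hat{\mu}_c(X_{i_1}) \pm \hat{\mu}_c(X_{i_2}) \sim \mathcal{N}(0, 2(cM)^2)$ missing $[-2M, 2M]$, and a union bound over the $O(n_2^2)$ pairs still drives $\mathbb{P}\{B'\} \to 1$ as $c \to \infty$. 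The only caveat worth stating is that if $|\hat{\mu}_c(X_i)| \leq M$ the left endpoint $|\hat{\mu}_c(X_i)| - M$ is negative, in which case the containing interval should be read as $[0, |\hat{\mu}_c(X_i)|+M]$; this does not affect the disjointness argument under $B'$. In short: correct proof, same strategy as the paper, and a legitimate correction to the paper's stated event $B$.
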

\begin{proof}
  Notice that on the event $A \cap B$,
$|m(X_i) - \hat{\mu}_c(X_i)|,|Y_i - \hat{\mu}_c(X_i)|\in
[|\hat{\mu}_c(X_i)| - M, |\hat{\mu}_c(X_i)| + M]$. This holds because
$|m(X_i)|,|Y_i|\leq M$ on the event $A$. Similarly,
$|m(X_{n+1}) - \hat{\mu}_c(X_{n+1})| \in [|\hat{\mu}_c(X_{n+1})| - M,
|\hat{\mu}_c(X_{n+1})| + M]$. These two intervals both have length
$2M$, but their centers are at a distance greater than $2M$ on the
event $B$, meaning that the intervals are disjoint. Therefore,
$|m(X_i) - \hat{\mu}_c(X_i)| \geq |m(X_{n+1}) - \hat{\mu}_c(X_{n+1})|$
implies that all elements of the first interval are greater than all
elements of the second, so
$|Y_i - \hat{\mu}_c(X_i)| \geq |m(X_{n+1}) - \hat{\mu}_c(X_{n+1})|$;
similarly,
$|Y_i - \hat{\mu}_c(X_i)| \geq |m(X_{n+1}) - \hat{\mu}_c(X_{n+1})|$
also implies that all elements of the first interval are greater than
all elements of the second, so
$|m(X_i) - \hat{\mu}_c(X_i)| \geq |m(X_{n+1}) -
\hat{\mu}_c(X_{n+1})|$.
\end{proof}

Looking at Algorithm \ref{Conditional Median Interval}, we have that
$m(X_{n+1})\in\hat{C}_n(X_{n+1})$ if
$|m(X_{n+1}) - \hat{\mu}_c(X_{n+1})| \leq Q_{1-\alpha/2}(E)$, where
$E_i = |Y_i - \hat{\mu}_c(X_i)|$ for all $i\in\mathcal{I}_2$. Because
$1/n_2 < \alpha/2$, $Q_{1-\alpha/2}(E)$ is finite and thus the
confidence interval is bounded. By Lemma \ref{Crazy Regression Third
  Lemma}, on the event $A \cap B$,
$|m(X_{n+1}) - \hat{\mu}_c(X_{n+1})| \leq Q_{1-\alpha/2}(E)$ if and
only if $|m(X_{n+1}) - \hat{\mu}_c(X_{n+1})| \leq Q_{1-\alpha/2}(F)$,
where $F_i = |m(X_i) - \hat{\mu}_c(X_i)|$ for all
$i\in\mathcal{I}_2$. 

{Define $C$ to be the event $\displaystyle\{ |m(X_{n+1}) - \hat{\mu}_c(X_{n+1})| \leq Q_{1-\alpha/2}(F)\}$. We have just shown that on the event $A\cap B\cap C$, we have $m(X_{n+1})\in\hat{C}_n(X_{n+1})$. Additionally, because the elements of $\{|m(X_i) - \hat{\mu}_c(X_i)|:i\in\mathcal{I}_2\cup\{n+1\}\}$ are exchangeable, we have that $\mathbb{P}\{C\}\geq 1-\alpha/2$. Then, by the union bound,
$$\mathbb{P}\{m(X_{n+1})\in\hat{C}_n(X_{n+1})\}\geq \mathbb{P}\{A\cap B\cap C\} \geq 1-\alpha/2 -\epsilon,$$
proving the desired result.}

\subsection{Proving Extensions of Theorem \ref{All Predictive Algorithms}}
\label{All Predictive Intervals Extension Proof}
We show the following two results:

Let $\hat{C}_n(x) = [\hat{L}_n(x), \hat{H}_n(x)]$ be any algorithm that only outputs confidence intervals and satisfies $\mathbb{P}\{Y_{n+1} \geq \hat{L}_n(X_{n+1})\}\geq 1-rq$ and $\mathbb{P}\{Y_{n+1} \leq \hat{H}_n(X_{n+1})\}\geq 1-s(1-q)$ for some $r+s=\alpha$. Then, $\hat{C}_n$ satisfies $(1-\alpha,q)$-Quantile. 
Secondly, if $\hat{C}_n$ only outputs confidence intervals and satisfies $(1-\min\{q,1-q\}\alpha)$-Predictive, then it satisfies $(1-\alpha,q)$-Quantile. 

Consider a distribution $P$, and for all $x\in\mathbb{R}^d$ in the
support of $P$, let $q(x)$ be $\textrm{Quantile}_q(Y|X=x)$. We prove the first result in two parts.

Conditioning on whether or not $q(X_{n+1})$ is greater than or equal to $\hat{L}(X_{n+1})$, note that
\begin{align*}
1-rq\leq & \mathbb{P}\{Y_{n+1} \geq \hat{L}_n(X_{n+1})\} \\
=& \mathbb{P}\{Y_{n+1} \geq \hat{L}_n(X_{n+1})|q(X_{n+1})\geq \hat{L}_n(X_{n+1})\}\mathbb{P}\{q(X_{n+1})\geq \hat{L}_n(X_{n+1})\}\\
&+ \mathbb{P}\{Y_{n+1} \geq \hat{L}_n(X_{n+1})|q(X_{n+1})< \hat{L}_n(X_{n+1})\}\mathbb{P}\{q(X_{n+1})< \hat{L}_n(X_{n+1})\}\\
\leq& \mathbb{P}\{q(X_{n+1})\geq \hat{L}_n(X_{n+1})\}+ (1-q)\mathbb{P}\{q(X_{n+1})< \hat{L}_n(X_{n+1})\}\\
=& (1-q) + q\cdot\mathbb{P}\{q(X_{n+1})\geq \hat{L}_n(X_{n+1})\}
\end{align*}
where we use the fact that if $q(X_{n+1})< \hat{L}_n(X_{n+1})$, at most $1-q$ of the conditional distribution of $Y|X=X_{n+1}$ can be at least $\hat{L}_n(X_{n+1})$. Subtracting $1-q$ from both sides and dividing by $q$ tells us that $1-r\leq \mathbb{P}\{q(X_{n+1})\geq \hat{L}_n(X_{n+1})\}$.

Similarly,
\begin{align*}
1-s(1-q)\leq & \mathbb{P}\{Y_{n+1} \leq \hat{H}_n(X_{n+1})\} \\
=& \mathbb{P}\{Y_{n+1} \leq \hat{H}_n(X_{n+1})|q(X_{n+1})\leq \hat{H}_n(X_{n+1})\}\mathbb{P}\{q(X_{n+1})\leq \hat{H}_n(X_{n+1})\}\\
&+ \mathbb{P}\{Y_{n+1} \leq \hat{H}_n(X_{n+1})|q(X_{n+1})> \hat{H}_n(X_{n+1})\}\mathbb{P}\{q(X_{n+1})> \hat{H}_n(X_{n+1})\}\\
\leq& \mathbb{P}\{q(X_{n+1})\leq \hat{H}_n(X_{n+1})\}+ q\cdot \mathbb{P}\{q(X_{n+1})> \hat{H}_n(X_{n+1})\}\\
=& q + (1-q)\mathbb{P}\{q(X_{n+1})\leq \hat{H}_n(X_{n+1})\}.
\end{align*}
Subtracting $q$ from both sides and dividing by $1-q$ tells us that $1-s\leq \mathbb{P}\{q(X_{n+1})\leq \hat{H}_n(X_{n+1})\}$.

Then, by the union bound, we have that $1-\alpha = 1-(r+s)\leq \mathbb{P}\{q(X_{n+1})\geq \hat{L}_n(X_{n+1})\cap q(X_{n+1})\leq \hat{H}_n(X_{n+1})\} = \mathbb{P}\{q(X_{n+1}\in\hat{C}_n(X_{n+1})\}$, proving the first result.

We can prove the second result in the exact same fashion as the proof of Theorem \ref{All Predictive Algorithms}. We have that
\begin{align*}
    1-\min\{q,1-q\}\alpha \leq &\mathbb{P}\{Y_{n+1}\in\hat{C}_n(X_{n+1}) \} \\
    =& \mathbb{P}\{Y_{n+1}\in\hat{C}_n(X_{n+1}) | q(X_{n+1})\in\hat{C}_n(X_{n+1})\}\mathbb{P}\{ q(X_{n+1})\in\hat{C}_n(X_{n+1})\}\\
    &+ \mathbb{P}\{Y_{n+1}\in\hat{C}_n(X_{n+1}) | q(X_{n+1})\not\in\hat{C}_n(X_{n+1})\}\mathbb{P}\{ q(X_{n+1})\not\in\hat{C}_n(X_{n+1})\} \\
    \leq& \mathbb{P}\{ q(X_{n+1})\in\hat{C}_n(X_{n+1})\}+ \max\{q,1-q\}\mathbb{P}\{ q(X_{n+1})\not\in\hat{C}_n(X_{n+1})\} \\
    =& \max\{q,1-q\} + \min\{q,1-q\}\mathbb{P}\{ q(X_{n+1})\in\hat{C}_n(X_{n+1})\}.
\end{align*}
Subtracting $\max\{q,1-q\}$ and dividing by $\min\{q,1-q\}$ yields the desired result.

\section{Additional Results}

\subsection{Alternate Proof of Theorem \ref{Conditional Median Interval}}

The proof of the theorem relies on two lemmas: the first establishes a
connection between $\textrm{Median}(Y_{n+1}|X_{n+1})$ and
$\textrm{Median}(Y|X=X_i)$'s using exchangeability, and the second
gives us a relationship between $\textrm{Median}(Y|X=X_i)$'s and
the $Y_i$'s.

We begin with some notation. For all $x\in\mathbb{R}^d$ in the
support of $P$, set
$m(x) = \textrm{Median}(Y|X=x)$.  Also, for $1\leq i\leq n+1$, let
$R(X_i) = |m(X_i)-\hat{\mu}(X_i)|$, and for $1\leq i\leq n$ let
$E(X_i) = |Y_i - \hat{\mu}(X_i)|$.  Finally, put
$M_R = \#\{i\in\mathcal{I}_2:R(X_i)\geq R(X_{n+1})\}$ as the number of
$i\in\mathcal{I}_2$ for which $R(X_i)\geq R(X_{n+1})$, and
$M_E = \#\{i\in\mathcal{I}_2:E(X_i)\geq R(X_{n+1})\}$ as the number of
$i\in\mathcal{I}_2$ for which $E(X_i)\geq R(X_{n+1})$.

Note that
$\textrm{Median}(Y_{n+1}|X_{n+1})\in \hat{C}_n(X_{n+1}) =
[\hat{\mu}(X_{n+1})-Q_{1-\alpha/2}(E),\hat{\mu}(X_{n+1})+Q_{1-\alpha/2}(E)]$
if and only if
$|\textrm{Median}(Y_{n+1}|X_{n+1}) - \hat{\mu}(X_{n+1})|=R(X_{n+1})$
is at most $Q_{1-\alpha/2}(E)$. Thus, we must study the value of
$R(X_{n+1})$ in relation to the elements of
$\{E(X_i):i\in\mathcal{I}_2\}$.

The first lemma relates $R(X_{n+1})$ to the other $R(X_i)$'s.
\begin{lemma}
\label{Median Comparison Lemma}
For all $0\leq m\leq n_2$, 
$$\mathbb{P}\big\{M_R \geq m \big\}\geq 1-\frac{m}{n_2+1}.$$
\end{lemma}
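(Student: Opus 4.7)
The plan is to exploit the fact that $\hat{\mu}$ is fit only on $\mathcal{D}_1 = \{(X_i,Y_i): i\in\mathcal{I}_1\}$, while the indices in $\mathcal{I}_2 \cup \{n+1\}$ correspond to points that are i.i.d.\ from $P$ and independent of $\hat{\mu}$. Conditional on $\hat{\mu}$ (equivalently, conditional on $\mathcal{D}_1$), the random variables $\{R(X_i) : i \in \mathcal{I}_2 \cup \{n+1\}\}$ are i.i.d., hence exchangeable; in particular, they remain exchangeable after marginalizing over $\mathcal{D}_1$. This is the essential probabilistic input: the test residual $R(X_{n+1})$ plays a symmetric role with the $n_2$ calibration residuals $\{R(X_i) : i \in \mathcal{I}_2\}$.

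From exchangeability, I would argue that the rank of $R(X_{n+1})$ among the $n_2+1$ values $\{R(X_i) : i \in \mathcal{I}_2 \cup \{n+1\}\}$ is stochastically no smaller than a uniform random variable on $\{1,\ldots,n_2+1\}$. The cleanest way to formalize this is to break ties uniformly at random: introduce i.i.d.\ $\textrm{Unif}[0,1]$ variables $U_i$ independent of everything else, and order the pairs $(R(X_i), U_i)$ lexicographically. Then the pairs are a.s.\ distinct, remain exchangeable, and the rank $K$ of $(R(X_{n+1}), U_{n+1})$ from the bottom is uniformly distributed on $\{1,\ldots,n_2+1\}$.

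Now I would observe that if $K = k$, then there are exactly $n_2+1-k$ indices $i \in \mathcal{I}_2$ whose lex-pair strictly exceeds that of $n+1$, and each such index satisfies $R(X_i) \geq R(X_{n+1})$ by construction. Consequently $M_R \geq n_2+1-K$ almost surely. Plugging in gives
\[
\mathbb{P}\{M_R \geq m\} \;\geq\; \mathbb{P}\{n_2+1-K \geq m\} \;=\; \mathbb{P}\{K \leq n_2+1-m\} \;=\; \frac{n_2+1-m}{n_2+1} \;=\; 1 - \frac{m}{n_2+1},
\]
which is precisely the desired bound.

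The only subtlety, and the step I expect to require the most care, is the handling of ties: without the auxiliary tie-breaking variables the rank of $R(X_{n+1})$ is not literally uniform (a point mass in the distribution of $R$ would concentrate several indices at the same value), but the randomized lex-ordering argument above shows that the inequality $M_R \geq n_2+1-K$ still holds pointwise, so the lower bound survives intact. Everything else is bookkeeping built on exchangeability of residuals constructed from a held-out regressor $\hat{\mu}$.
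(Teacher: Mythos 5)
Your proof is correct and takes essentially the same approach as the paper's: both rely on exchangeability of the residuals $\{R(X_i) : i \in \mathcal{I}_2 \cup \{n+1\}\}$, which follows from $\hat{\mu}$ being fit only on $\mathcal{I}_1$, to deduce that the rank of $R(X_{n+1})$ is stochastically dominated appropriately. Your explicit tie-breaking via auxiliary uniforms is a careful addition; the paper simply asserts that ``the ordering of these values is uniformly random'' without addressing the possibility of ties, so your version is slightly more rigorous but not a different argument.
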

\begin{proof}
  Our statement follows from the fact that our samples are
  i.i.d. Because $\hat{\mu}$ is independent of $(X_i,Y_i)$ for
  $i\in\mathcal{I}_2$ and independent of $X_{n+1}$, the values
  $\{R(X_i):i\in\mathcal{I}_2\}\cup\{R(X_{n+1})\}$ are i.i.d.~as
  well. Then, the probability of less than $m$ values in
  $\{R(X_i):i\in\mathcal{I}_2\}$ being at least $R(X_{n+1})$ is
  bounded above by $\frac{m}{|\mathcal{I}_2|+1}$, as the ordering of
  these values is uniformly random. Taking the complement of both
  sides gives the result.
\end{proof}

The second lemma gives a direct relationship between $E(X_i)$ and
$R(X_i)$. (Note that the events $\{E(X_i)\geq R(X_i)\}$ below are
mutually independent.) 
\begin{lemma}
\label{Median Probability Lemma}
For all $i\in\mathcal{I}_2$, 
$$\mathbb{P}\big\{E(X_i)\geq R(X_i) \big\}\geq 1/2.$$
\end{lemma}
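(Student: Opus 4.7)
The plan is to exploit the defining property of the median after conditioning on $\hat{\mu}$ and $X_i$, and then remove the conditioning by taking expectations. Since $\hat{\mu}$ is fit only on $\mathcal{D}_1 = \{(X_j,Y_j): j \in \mathcal{I}_1\}$ and $i \in \mathcal{I}_2$, the pair $(X_i, Y_i)$ is independent of $\hat{\mu}$. Consequently, conditional on $\hat{\mu}$ and $X_i$, the variable $Y_i$ still follows the conditional law $Y \mid X = X_i$, and both $m(X_i)$ and $\hat{\mu}(X_i)$ are deterministic numbers.

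First I would split into two cases depending on the sign of $\hat{\mu}(X_i) - m(X_i)$. If $\hat{\mu}(X_i) \geq m(X_i)$, then whenever $Y_i \leq m(X_i)$ we have $Y_i \leq m(X_i) \leq \hat{\mu}(X_i)$, so
\[
E(X_i) = \hat{\mu}(X_i) - Y_i \;\geq\; \hat{\mu}(X_i) - m(X_i) = R(X_i).
\]
By definition of the median, $\mathbb{P}\{Y_i \leq m(X_i) \mid X_i\} \geq 1/2$, giving the bound in this case. Symmetrically, if $\hat{\mu}(X_i) < m(X_i)$, then $Y_i \geq m(X_i)$ forces $E(X_i) \geq R(X_i)$, and again $\mathbb{P}\{Y_i \geq m(X_i) \mid X_i\} \geq 1/2$.

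Putting these together yields $\mathbb{P}\{E(X_i) \geq R(X_i) \mid \hat{\mu}, X_i\} \geq 1/2$ pointwise, and marginalizing over $\hat{\mu}$ and $X_i$ gives the claim. The only subtlety worth flagging is the independence step: one must verify that $(X_i,Y_i) \indep \hat{\mu}$ for $i \in \mathcal{I}_2$, which follows from the i.i.d.\ assumption on the dataset together with the fact that $\mathcal{I}_1$ and $\mathcal{I}_2$ are disjoint and the split is chosen independently of the data. This is the only place where the splitting structure of Algorithm \ref{Conditional Median Interval} is invoked; everything else reduces to the elementary one-sided median inequality applied on either side of $\hat{\mu}(X_i)$. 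There is no real obstacle — the argument is a short conditional calculation — but the cleanest presentation is to state the independence explicitly at the start and then do the two-case analysis in a single display.
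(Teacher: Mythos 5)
Your proof is correct and follows essentially the same route as the paper's: both condition so that $m(X_i)$ and $\hat{\mu}(X_i)$ become fixed, split on the sign of $\hat{\mu}(X_i)-m(X_i)$, and apply the one-sided median bound $\mathbb{P}\{Y_i\leq m(X_i)\mid X_i\}\geq 1/2$ (or its mirror) in each case before marginalizing. Your version is slightly cleaner in explicitly conditioning on $\hat{\mu}$ as well as $X_i$, but it is the same argument.
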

\begin{proof}
We see that $\mathbb{P}\{Y_i\geq m(X_i)\}\geq
1/2$ and $\mathbb{P}\{Y_i\leq m(X_i)\}\geq 1/2$ by the definition of
the conditional median. 
Furthermore, the events $\{Y_i\geq m(X_i)\}$ and
$\{Y_i\leq m(X_i)\}$ are independent of the events $\{m(X_i)\geq \hat{\mu}(X_i)\}$ and $\{m(X_i)\leq \hat{\mu}(X_i)\}$ given $X_i$, as
$\hat{\mu}$ is a function of $\{(X_i,Y_i): i\in \mathcal{I}_1\}$ and the datapoints are i.i.d. Then, conditioned on $X_i$, if $m(X_i)\geq \hat{\mu}(X_i)$, with probability $1/2$ we have that $m(X_i)\leq Y_i$, in which case 
$|m(X_i)-\hat{\mu}(X_i)| = m(X_i)-\hat{\mu}(X_i)\leq Y_i -
\hat{\mu}(X_i) = |Y_i - \hat{\mu}(X_i)|$. Similarly, conditioned on $X_i$ again, if $m(X_i)<
\hat{\mu}(X_i)$, with probability $1/2$ we have that $m(X_i)\geq Y_i$,
in which case $|m(X_i)-\hat{\mu}(X_i)| = \hat{\mu}(X_i)-m(X_i)\leq
\hat{\mu}(X_i) -Y_i= |Y_i - \hat{\mu}(X_i)|$. The conclusion holds conditionally in both cases; marginalizing out $X_i$ yields the desired result.
\end{proof}

We now study the number of datapoints obeying $E(X_i)\geq R(X_{n+1})$
by combining these lemmas together. Consider any $0\leq m\leq
n_2$. Note that by conditioning on $M_R$,
\begin{align*}
\mathbb{P}\big\{M_E \geq m \big\}
=& \sum_{j=0}^{n_2} \mathbb{P}\big\{ M_R = j \big\}\mathbb{P}\big\{M_E \geq m | M_R = j \big\} \\
\geq& \sum_{j=0}^{n_2} \mathbb{P}\big\{ M_R = j \big\}\sum_{k=m}^j \dbinom{j}{k}2^{-j}\\
\geq& \sum_{j=0}^{n_2} \frac{1}{n_2+1}\sum_{k=m}^j \dbinom{j}{k}2^{-j}.
\end{align*}
The first inequality holds true by Lemma \ref{Median Probability
  Lemma}, which implies that $\mathbb{P}\{M_E \geq m | M_R = j \}$ can
be bounded below by the probability that $M\geq m$ for
$M\sim\textrm{Binom}(j,0.5)$. The second inequality is due to Lemma
\ref{Median Comparison Lemma}. We know that
$\sum_{k=m}^j \binom{j}{k}2^{-j}$ is a nondecreasing function of $j$;
by Lemma \ref{Median Comparison Lemma}, the CDF of the distribution of
$M_R$ is lower bounded by the CDF of the uniform distribution over
$\{0,1,\ldots,n_2\}$, meaning that
$$\mathbb{E}_{M_R}\left[\sum_{k=m}^{M_R} \binom{M_R}{k}2^{-M_R}\right]\geq
\frac{1}{n_2+1}\sum_{j=0}^{n_2}\sum_{k=m}^j \binom{j}{k}2^{-j}.$$
This gives
\begin{align*}
\sum_{j=0}^{n_2} \frac{1}{n_2+1}\sum_{k=m}^j \dbinom{j}{k}2^{-j}
=& \sum_{j=0}^{n_2} \frac{1}{n_2+1}\Big(1- \sum_{k=0}^{m-1} \dbinom{j}{k}2^{-j} \Big) \\
=& 1-\frac{1}{n_2+1}\sum_{j=0}^{n_2} \sum_{k=0}^{m-1} \dbinom{j}{k}2^{-j}  \\
=& 1-\frac{1}{n_2+1}\sum_{k=0}^{m-1} \sum_{j=0}^{n_2}  \dbinom{j}{k}2^{-j}  \\
\geq& 1-\frac{1}{n_2+1}\sum_{k=0}^{m-1} \sum_{j=0}^{\infty}  \dbinom{j}{k}2^{-j}  \\
=& 1-\frac{1}{n_2+1}\sum_{k=0}^{m-1} 2 &\\
=& 1-\frac{2m}{n_2+1}.
\end{align*}
The second-to-last equality comes from evaluating the generating
function $G(\binom{t}{k}; z) = \sum_{t=0}^{\infty} \binom{t}{k} z^{t}$ at $z = 0.5$.

Putting it together, in Algorithm \ref{Conditional Median Interval},
$Q_{1-\alpha/2}(E)$ is set to be the $(1-\alpha/2)(1+1/n_2)$-th
quantile of $\{E_i:i\in\mathcal{I}_2\}$. This is equal to the
$\lceil n_2 (1-\alpha/2)(1+1/n_2) \rceil = \lceil (1-\alpha/2)(n_2+1)
\rceil$
smallest value of $\{E_i:i\in\mathcal{I}_2\}$. This means that if
$M_E\geq n_2 - \lceil (1-\alpha/2)(n_2+1) \rceil+1$, then $R(X_{n+1})$
will be at most the $n_2 - \lceil (1-\alpha/2)(n_2+1) \rceil+1$
largest value of $\{E_i:i\in\mathcal{I}_2\}$, which is equal to
the $\lceil (1-\alpha/2)(n_2+1) \rceil$ smallest value, or
$Q_{1-\alpha/2}(E)$.

However, we have calculated a lower bound for the inverse CDF of $M_E$
earlier. Substituting this in, we get that
\begin{align*}
\mathbb{P}\{ R(X_{n+1}) \leq  Q_{1-\alpha/2}(E)\}
\geq& \mathbb{P}\{ M_E \geq n_2 - \lceil (1-\alpha/2)(n_2+1) \rceil+1\}\\
\geq& \mathbb{P}\{ M_E \geq n_2 +1 -(1-\alpha/2)(n_2+1) \} \\
=& \mathbb{P}\{ M_E \geq \alpha/2(n_2 +1) \} \\
\geq& 1-\alpha
\end{align*}
by our previous calculation, completing our proof.

\subsection{Alternate Proof of Theorem \ref{Conditional Quantile Interval}}

Our approach is similar to that in Appendix \ref{Conditional Median
  Proof}. 
The main difference in the proof arises from the fact that Algorithm
\ref{Conditional Quantile Interval} no longer uses the absolute value
and uses two separate fitted functions, meaning that it is important
to bound the probability of the confidence interval covering the
desired value from both sides.

We begin with some definitions. For all $x\in\mathbb{R}^d$ in the
support of $P$, let $q(x)$ be $\textrm{Quantile}_q(Y|X=x)$. For all
$1\leq i\leq n+1$, define:
\begin{itemize}
\itemsep0em 
\item $R^{\textnormal{lo}}(X_i) = f^{\textnormal{lo}}(X_i,q(X_i))$ and $R^{\textnormal{hi}}(X_i) = f^{\textnormal{hi}}(X_i,q(X_i))$. 
\item $E^{\textnormal{lo}}(X_i) = f^{\textnormal{lo}}(X_i,Y_i)$
  and $E^{\textnormal{hi}}(X_i) = f^{\textnormal{hi}}(X_i,Y_i)$.
\item
  $M_R^{\textrm{lo}} = \#\{i\in\mathcal{I}_2:
  R^{\textnormal{lo}}(X_i)\leq R^{\textnormal{lo}}(X_{n+1}) \}$
  and
  $M_R^{\textnormal{hi}} = \#\{i\in\mathcal{I}_2:
  R^{\textnormal{hi}}(X_i)\geq R^{\textnormal{hi}}(X_{n+1}) \}$
  as the number of $i\in\mathcal{I}_2$ for which
  $R^{\textnormal{lo}}(X_i)\leq R^{\textnormal{lo}}(X_{n+1})$ and
  $R^{\textnormal{hi}}(X_i)\geq R^{\textnormal{hi}}(X_{n+1})$
  respectively.
\item
  $M_E^{\textrm{lo}} = \#\{i\in\mathcal{I}_2:
  E^{\textnormal{lo}}(X_i)\leq R^{\textnormal{lo}}(X_{n+1}) \}$
  and
  $M_E^{\textnormal{hi}} = \#\{i\in\mathcal{I}_2:
  E^{\textnormal{hi}}(X_i)\geq R^{\textnormal{hi}}(X_{n+1}) \}$
  as the number of $i\in\mathcal{I}_2$ for which
  $E^{\textnormal{lo}}(X_i)\leq R^{\textnormal{lo}}(X_{n+1})$ and
  $E^{\textnormal{hi}}(X_i)\geq R^{\textnormal{hi}}(X_{n+1})$
  respectively.
\end{itemize}

Now, note that
$\textrm{Quantile}_q(Y|X=X_{n+1})\in \hat{C}_n(X_{n+1})$ precisely
when
$f^{\textnormal{lo}}(X_{n+1},\textrm{Quantile}_q(Y|X=X_{n+1}))=R^{\textnormal{lo}}(X_{n+1})\geq
Q^{\textnormal{lo}}_{rq}$
and
$f^{\textnormal{hi}}(X_{n+1},\textrm{Quantile}_q(Y|X=X_{n+1}))=R^{\textnormal{hi}}(X_{n+1})\leq
Q^{\textnormal{hi}}_{1-s(1-q)}$.
As such, we proceed to develop two lemmas which extend those from
Section \ref{Conditional Median Proof}: the first helps us to
understand the distributions of $M_R^{\textnormal{hi}}$ and
$M_R^{\textrm{lo}}$, and the second studies the individual
relationships between $E^{\textnormal{lo}}(X_i)$ and
$R^{\textnormal{lo}}(X_i)$ and between
$E^{\textnormal{hi}}(X_i)$ and $R^{\textnormal{hi}}(X_i)$. With both
of these lemmas, we are able to bound the probability of each event
$\{R^{\textnormal{lo}}(X_{n+1})\geq Q^{\textnormal{lo}}_{rq}\}$ and
$\{R^{\textnormal{hi}}(X_{n+1})\leq Q^{\textnormal{hi}}_{1-s(1-q)}\}$.

\begin{lemma}
\label{Quantile Comparison Lemma}
For all $0\leq m\leq n_2$, 
$$\mathbb{P}\big\{M_R^{\textnormal{lo}} \geq m \big\}\geq 1-\frac{m}{n_2+1}$$
and
$$\mathbb{P}\big\{M_R^{\textnormal{hi}} \geq m \big\}\geq 1-\frac{m}{n_2+1}.$$
\end{lemma}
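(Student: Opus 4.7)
The plan is to exploit exchangeability of the $R^{\textnormal{lo}}$-values indexed by $\mathcal{I}_2\cup\{n+1\}$ (and symmetrically for the $R^{\textnormal{hi}}$-values). The key structural observation is that $f^{\textnormal{lo}}$ is fit only on $\{(X_j,Y_j):j\in\mathcal{I}_1\}$ and the conditional quantile function $q(x)=\textrm{Quantile}_q(Y|X=x)$ is a functional of the distribution $P$ alone. Hence, once we condition on the training fold, the deterministic map $x\mapsto R^{\textnormal{lo}}(x)=f^{\textnormal{lo}}(x,q(x))$ is fixed. Because the calibration points $\{(X_i,Y_i):i\in\mathcal{I}_2\}$ and the test feature $X_{n+1}$ are drawn i.i.d.\ from $P$ and are independent of the training fold, the collection $\{R^{\textnormal{lo}}(X_i):i\in\mathcal{I}_2\cup\{n+1\}\}$ is (conditionally, and hence unconditionally) exchangeable.

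Next I would fix any deterministic tie-breaking rule and sort the $(n_2+1)$ labelled values as $R^{\textnormal{lo}}(X_{\pi(1)})\leq\cdots\leq R^{\textnormal{lo}}(X_{\pi(n_2+1)})$. By exchangeability, the sorted position $K$ occupied by the test index $n+1$ is uniformly distributed on $\{1,\dots,n_2+1\}$. Whenever $K=k$, each of the $k-1$ earlier values in the sorted order belongs to a calibration index $i\in\mathcal{I}_2$ and satisfies $R^{\textnormal{lo}}(X_i)\leq R^{\textnormal{lo}}(X_{n+1})$, so $M_R^{\textnormal{lo}}\geq k-1$. Consequently
\[
\mathbb{P}\{M_R^{\textnormal{lo}}\geq m\}\;\geq\;\mathbb{P}\{K-1\geq m\}\;=\;\frac{n_2+1-m}{n_2+1}\;=\;1-\frac{m}{n_2+1},
\]
which is the first assertion. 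The bound for $M_R^{\textnormal{hi}}$ is obtained by running the identical argument after reversing the sorting direction (nonincreasing instead of nondecreasing) and replacing ``$\leq$'' by ``$\geq$''.

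The only real subtlety is tie handling: when several of the $R^{\textnormal{lo}}(X_i)$ coincide, the ``sorted position'' of a given index depends on the tie-breaking convention. This is harmless because (i) for any fixed rule, exchangeability still makes the position of index $n+1$ uniform on $\{1,\dots,n_2+1\}$, and (ii) the bound $M_R^{\textnormal{lo}}\geq k-1$ uses only the weak inequality in the definition of $M_R^{\textnormal{lo}}$, so ties among the first $k-1$ positions and the test point only help. I expect this tie-bookkeeping to be the main place where care is needed; everything else is a one-line exchangeability plus uniform-rank computation mirroring the analogous step in Appendix~\ref{Conditional Median Proof} for the $E$-values.
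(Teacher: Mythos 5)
Your proof mirrors the paper's own argument: both rest on the observation that, because $f^{\textnormal{lo}}$ and $f^{\textnormal{hi}}$ are fit only on $\mathcal{I}_1$ and $q(\cdot)$ is a functional of $P$, the values $\{R^{\textnormal{lo}}(X_i):i\in\mathcal{I}_2\cup\{n+1\}\}$ are exchangeable (i.i.d.\ given the training fold), and both then read the bound off the uniform rank of the test index among the $n_2+1$ values. One small imprecision, which the paper's proof also elides: the claim that \emph{any} fixed tie-breaking rule makes the rank of index $n+1$ uniform is not literally true (e.g., breaking ties by index is not index-symmetric), but a symmetric or randomized tie-break restores uniformity, and your observation (ii)---that ties can only increase $M_R^{\textnormal{lo}}$ relative to $K-1$---is exactly the right reason the bound is unaffected.
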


\begin{proof}
  We prove the result for $M_R^{\textnormal{hi}}$; the same approach
  holds for $M_R^{\textrm{lo}}$. Because $f^{\textnormal{hi}}$ is
  independent of $(X_i,Y_i)$ for $i\in\mathcal{I}_2$ and independent
  of $X_{n+1}$, the values in 
  $\{R^{\textnormal{hi}}(X_i):i\in\mathcal{I}_2\}\cup\{R^{\textnormal{hi}}(X_{n+1})\}$
  are i.i.d.. Thus, the probability of less than $m$ values in
  $\{R^{\textnormal{hi}}(X_i):i\in\mathcal{I}_2\}$ being at least
  $R^{\textnormal{hi}}(X_{n+1})$ is bounded above by
  $\frac{m}{|\mathcal{I}_2|+1}$, as the ordering of these values is
  uniformly random. Taking the complement of both sides establishes
  the claim.
\end{proof}

\begin{lemma}
\label{Quantile Probability Lemma}
For all $i\in\mathcal{I}_2$, 
$$\mathbb{P}\big\{E^{\textnormal{lo}}(X_i)\leq R^{\textnormal{lo}}(X_i) \big\}\geq q$$
and
$$\mathbb{P}\big\{E^{\textnormal{hi}}(X_i)\geq R^{\textnormal{hi}}(X_i) \big\}\geq 1-q.$$
\end{lemma}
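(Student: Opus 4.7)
My plan is to establish both inequalities by reducing them, via the local nondecreasing property of the conformity scores, to statements about the raw response $Y_i$ and the true conditional quantile $q(X_i)$, and then invoke the definition of the conditional quantile. The key observation is that for $i \in \mathcal{I}_2$, the functions $f^{\textnormal{lo}}$ and $f^{\textnormal{hi}}$ are fit only on $\{(X_j,Y_j): j \in \mathcal{I}_1\}$, so they are independent of $(X_i, Y_i)$; I would state this explicitly before doing the probabilistic reasoning, since we need to treat them as deterministic functions when conditioning.

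For the first inequality, I would argue that by Definition \ref{Locally Nondecreasing Conformity Score Definition}, whenever $Y_i \leq q(X_i)$ we automatically have $f^{\textnormal{lo}}(X_i, Y_i) \leq f^{\textnormal{lo}}(X_i, q(X_i))$, i.e.\ $E^{\textnormal{lo}}(X_i) \leq R^{\textnormal{lo}}(X_i)$. This gives the containment of events
\[\{Y_i \leq q(X_i)\} \subseteq \{E^{\textnormal{lo}}(X_i) \leq R^{\textnormal{lo}}(X_i)\},\]
so it suffices to show $\mathbb{P}\{Y_i \leq q(X_i)\} \geq q$. Conditionally on $X_i$, $q(X_i)$ is by definition the $q$th quantile of the distribution of $Y_i$, so $\mathbb{P}\{Y_i \leq q(X_i) \mid X_i\} \geq q$ by the standard property of quantiles. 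Marginalizing over $X_i$ via the tower property yields the desired bound.

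The second inequality proceeds symmetrically: local nondecreasingness of $f^{\textnormal{hi}}$ implies $\{Y_i \geq q(X_i)\} \subseteq \{E^{\textnormal{hi}}(X_i) \geq R^{\textnormal{hi}}(X_i)\}$, and the defining property of the $q$th quantile gives $\mathbb{P}\{Y_i \geq q(X_i) \mid X_i\} \geq 1 - q$, from which the claim follows after marginalization.

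There is no serious obstacle here; the argument is essentially a translation of the quantile inequality through a monotone map. The one point that requires a little care, and which I would flag in the write-up, is the independence between $f^{\textnormal{lo}}, f^{\textnormal{hi}}$ and $(X_i, Y_i)$ for $i \in \mathcal{I}_2$: without this, one could not treat $q(X_i)$ (a deterministic functional of the data-generating distribution $P$) and the fitted score functions as interacting with $(X_i, Y_i)$ only through the nondecreasing substitution we exploit. Once that is noted, the quantile inequality $\mathbb{P}\{Y_i \leq q(X_i) \mid X_i\} \geq q$ (and its right-tail analogue) is immediate and both bounds drop out.
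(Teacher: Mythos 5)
Your proof is correct and follows essentially the same approach as the paper: both reduce to the event containment $\{Y_i \leq q(X_i)\} \subseteq \{E^{\textnormal{lo}}(X_i) \leq R^{\textnormal{lo}}(X_i)\}$ (and its right-tail analogue) via the locally nondecreasing property and then invoke the defining quantile inequality. Your explicit remarks about conditioning on $X_i$ and the independence of the fitted scores from $(X_i, Y_i)$ for $i \in \mathcal{I}_2$ are a slightly more careful write-up of what the paper states tersely, but the argument is the same.
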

\begin{proof}
  We see that $\mathbb{P}\{Y_i\leq q(X_i)\}\geq q$ and
  $\mathbb{P}\{Y_i\geq q(X_i)\}\geq 1-q$ by the definition of the
  conditional quantile. 
  Then, 
  with probability at least $q$ we have that
  $E^{\textnormal{lo}}(X_i) = f^{\textnormal{lo}}(X_i,Y_i)\leq
  f^{\textnormal{lo}}(X_i,q(X_i))=R^{\textnormal{lo}}(X_i)$
  by the definition of a locally nondecreasing conformity
  score. Similarly, 
  with probability at least $1-q$ we have that
  $E^{\textnormal{hi}}(X_i) = f^{\textnormal{hi}}(X_i,Y_i)\geq
  f^{\textnormal{hi}}(X_i,q(X_i))=R^{\textnormal{hi}}(X_i)$,
  thereby concluding the proof.
\end{proof}

We now study the number of $i\in\mathcal{I}_2$ such that 
$E^{\textnormal{hi}}(X_i)\geq R^{\textnormal{hi}}(X_{n+1})$ by
combining these two lemmas together.  Consider any $0\leq m\leq n_2$,
and note that by conditioning on $M_R^{\textnormal{hi}}$, we have 
\begin{align*}
\mathbb{P}\big\{M_E^{\textnormal{hi}} \geq m \big\}
=& \sum_{j=0}^{n_2} \mathbb{P}\big\{ M_R^{\textnormal{hi}} = j \big\}\mathbb{P}\big\{M_E^{\textnormal{hi}} \geq m | M_R^{\textnormal{hi}} = j \big\} \\
\geq& \sum_{j=0}^{n_2} \mathbb{P}\big\{ M_R^{\textnormal{hi}} = j \big\}\sum_{k=m}^j \dbinom{j}{k}(1-q)^kq^{j-k}\\
\geq& \sum_{j=0}^{n_2} \frac{1}{n_2+1}\sum_{k=m}^j \dbinom{j}{k}(1-q)^kq^{j-k}.
\end{align*}
The first inequality holds because
$\mathbb{P}\{M_E^{\textnormal{hi}} \geq m | M_R^{\textnormal{hi}} = j
\}$
can be bounded below by the probability that $M\geq m$ for
$M\sim\textrm{Binom}(j,1-q)$ by Lemma \ref{Quantile Probability
  Lemma}. The second inequality holds since the CDF $M_R^{\textnormal{hi}}$ is greater than or
equal to the CDF of the uniform distribution over $\{0,1,\ldots,n_2\}$
by Lemma \ref{Quantile Comparison Lemma}. Then, as
$\sum_{k=m}^j \binom{j}{k}(1-q)^kq^{j-k}$ is a nondecreasing function
of $j$, we have
$$\mathbb{E}_{M_R^{\textnormal{hi}}}\left[\sum_{k=m}^{M_R^{\textnormal{hi}}}
\binom{M_R^{\textnormal{hi}}}{k}(1-q)^kq^{M_R^{\textnormal{hi}}-k}\right]\geq
\frac{1}{n_2+1}\sum_{j=0}^{n_2}\sum_{k=m}^j
\binom{j}{k}(1-q)^kq^{j-k}.$$
We now solve the summation, which gives 
\begin{align*}
\sum_{j=0}^{n_2} \frac{1}{n_2+1}\sum_{k=m}^j \dbinom{j}{k}(1-q)^kq^{j-k}
=& \sum_{j=0}^{n_2} \frac{1}{n_2+1}\Big(1- \sum_{k=0}^{m-1} \dbinom{j}{k}(1-q)^kq^{j-k} \Big) \\
=& 1-\frac{1}{n_2+1}\sum_{j=0}^{n_2} \sum_{k=0}^{m-1} \dbinom{j}{k}(1-q)^kq^{j-k} \\
=& 1-\frac{1}{n_2+1}\sum_{k=0}^{m-1} \sum_{j=0}^{n_2}  \dbinom{j}{k}(1-q)^kq^{j-k}  \\
\geq& 1-\frac{1}{n_2+1}\sum_{k=0}^{m-1} \Big(\frac{1-q}{q}\Big)^k\sum_{j=0}^{\infty}  \dbinom{j}{k}q^{j}  \\
=& 1-\frac{1}{n_2+1}\sum_{k=0}^{m-1} \frac{1}{1-q} &\\
=& 1-\frac{1}{1-q}\cdot\frac{m}{n_2+1}, 
\end{align*}
where the second-to-last equality is from evaluating the generating function $G(\binom{t}{k}; z) = \sum_{t=0}^{\infty} \binom{t}{k} z^{t}$ at $z = q$. 

Note that this same calculation works for counting the
$i\in\mathcal{I}_2$ with
$E^{\textnormal{lo}}(X_i)\leq R^{\textnormal{lo}}(X_{n+1})$ using the
same lemmas, substituting $M_E^{\textrm{lo}}$ for
$M_E^{\textnormal{hi}}$, $M_R^{\textrm{lo}}$ for
$M_R^{\textnormal{hi}}$, and $q$ for $1-q$ within the
calculation. This gives
$$\mathbb{P}\big\{M_E^{\textnormal{lo}} \geq m \big\}\geq 1-\frac{1}{q}\cdot\frac{m}{n_2+1}.$$

Now, in Algorithm \ref{Conditional Quantile Interval},
$Q_{1-s(1-q)}^{\textnormal{hi}}(E)$ is defined as the
$(1-s(1-q))(1+1/n_2)$-th quantile of
$\{E^{\textnormal{hi}}_i:i\in\mathcal{I}_2\}$. This is equal to the
$\lceil n_2(1-s(1-q))(1+1/n_2)\rceil = \lceil (1-s(1-q))(n_2+1)\rceil$
smallest value of
$\{E^{\textnormal{hi}}_i:i\in\mathcal{I}_2\}$. Thus, if
$M_E^{\textnormal{hi}}\geq n_2 - \lceil (1-s(1-q))(n_2+1)\rceil+1$,
then $R^{\textnormal{hi}}(X_{n+1})$ will be at most the
$n_2 - \lceil (1-s(1-q))(n_2+1)\rceil+1$ largest value of
$\{E^{\textnormal{hi}}_i:i\in\mathcal{I}_2\}$, which is equal to the
$\lceil (1-s(1-q))(n_2+1)\rceil$ smallest value, or
$Q_{1-s(1-q)}^{\textnormal{hi}}(E)$.  Then, using our earlier lower
bound for the inverse CDF of $M_E^{\textnormal{hi}}$, we get that
\begin{align*}
\mathbb{P}\{ R^{\textnormal{hi}}(X_{n+1}) \leq  Q_{1-s(1-q)}^{\textnormal{hi}}(E)\}
\geq& \mathbb{P}\{ M_E^{\textnormal{hi}} \geq n_2 - \lceil (1-s(1-q))(n_2+1)\rceil+1\}\\
\geq& \mathbb{P}\{ M_E^{\textnormal{hi}} \geq n_2+1 - (1-s(1-q))(n_2+1) \} \\
=& \mathbb{P}\{ M_E^{\textnormal{hi}} \geq s(1-q)(n_2 +1) \} \\
\geq& 1-s.
\end{align*}

Similarly, $Q_{rq}^{\textrm{lo}}(E)$ is defined as the
$rq- (1-rq)/n_2$-th quantile of
$\{E^{\textnormal{lo}}_i:i\in\mathcal{I}_2\}$. This is equal to the
$\lceil n_2 (rq- (1-rq)/n_2)\rceil = \lceil (n_2+1) rq-1\rceil$
smallest value of 
$\{E^{\textnormal{lo}}_i:i\in\mathcal{I}_2\}$. Thus, if
$M_E^{\textrm{lo}}\geq \lceil (n_2+1) rq-1\rceil$, then
$R^{\textnormal{lo}}(X_{n+1})$ will be at least
$Q_{rq}^{\textrm{lo}}(E)$.  Then, our lower bound for the
$M_E^{\textrm{lo}}$ inverse CDF tells us that
\begin{align*}
\mathbb{P}\{ R^{\textnormal{lo}}(X_{n+1}) \geq  Q_{rq}^{\textnormal{lo}}(E)\}
\geq& \mathbb{P}\{ M_E^{\textnormal{lo}} \geq \lceil (n_2+1) rq-1\rceil\}\\
\geq& \mathbb{P}\{ M_E^{\textnormal{lo}} \geq (n_2+1)rq\} \\
\geq& 1-r.
\end{align*}

Finally, by the union bound, we have that
$$\mathbb{P}\{Q_{rq}^{\textnormal{lo}}(E)\leq R^{\textnormal{lo}}(X_{n+1})\textnormal{ and } R^{\textnormal{hi}}(X_{n+1})\leq Q_{1-s(1-q)}^{\textnormal{hi}}(E)\}\geq 1-r-s=1-\alpha$$
completing our proof.

\subsection{Impossibility of Capturing the Distribution Mean}
\label{Conditional Mean Proof}

Instead of proving the impossibility of capturing the conditional mean
of a distribution, we prove a more general result: we show that there
does not exist an algorithm to capture the mean of a distribution
$Y\sim P$ given no assumptions about $P$. This is a more general form
of our result because if we set $X\indep Y$ in $(X,Y)\sim P$, then
$\mathbb{E}[Y|X]=\mathbb{E}[Y]$, meaning that the impossibility of
capturing the mean results in the conditional mean being impossible to
capture as well.

Consider an algorithm $\hat{C}_n$ that, given i.i.d.~samples
$Y_1,\ldots,Y_n \sim P$, returns a (possibly randomized) confidence
interval $\hat{C}_n(\mathcal{D})$,
$\mathcal{D} = \{Y_i, 1 \le i \le n\}$, with length bounded by some
function of $P$ that captures $\mathbb{E}[Y]$ with probability at
least $1-\alpha$,
i.e.~$\mathbb{P}\{\mathbb{E}[Y]\in \hat{C}_n(\mathcal{D})\}\geq
1-\alpha$.
Pick $a>\alpha$, with $a<1$. Consider a distribution $P$ where for
$Y\sim P$, $\mathbb{P}\{Y=0\}=a^{1/n}$ and
$\mathbb{P}\{Y=u\}=1-a^{1/n}$ for some $u$. Then for
$Y_1,\ldots,Y_n\sim P$, $\mathbb{P}\{Y_1=\cdots=Y_n=0\}\geq a$.
Consider $\hat{C}_n(\{0,\ldots,0\})$; by our assumption on
$\hat{C}_n$, there must exist some $m\in\mathbb{R}$ for which
$\mathbb{P}\{m\in \hat{C}_n(\{0,\ldots,0\})\}<1-\alpha/a$. Then,
setting $u = \frac{m}{1-a^{1/n}}$ yields $\mathbb{E}[Y]=m$.  With
probability $a$, $\hat{C}_n(\mathcal{D}) = \hat{C}_n(\{0,\ldots,0\})$,
so
$$\mathbb{P}\{\mathbb{E}[Y]=m\not\in \hat{C}_n(\mathcal{D})\}> a\cdot\frac{\alpha}{a} = \alpha.$$
This implies that $\mathbb{P}\{\mathbb{E}[Y]\in \hat{C}_n(\mathcal{D})\}< 1-\alpha$
as desired, completing the proof.

\subsection{Capturing the Distribution Median}
\label{Median Proof}

\begin{algorithm}[H]
\caption{Confidence Interval for $\textrm{Median}(Y)$ with Coverage $1-\alpha$}
\label{Median Interval}
\SetKwInOut*{KwIn}{}
\SetKwInOut{KwProcess}{Process}
\SetKwInOut*{KwOut}{}
\KwIn{}

\begin{algorithmic}
\STATE Number of i.i.d.~datapoints $n\in\mathbb{N}$.
\STATE Datapoints $Y_1,\ldots,Y_n\sim P\subseteq\mathbb{R}$.
\STATE Coverage level $1-\alpha\in (0,1)$.
\end{algorithmic}

\KwProcess{}
\begin{algorithmic}
\STATE Order the $Y_i$ as $Y_{(1)}\leq \cdots \leq Y_{(n)} $.
\STATE Calculate the largest $k\geq 0$ such that for $X\sim \textrm{Binom}(n,0.5)$, we have $\mathbb{P}\{X < k\}\leq \alpha/2$. 
\end{algorithmic}

\KwOut{}
\begin{algorithmic}
\STATE Confidence interval $\hat{C}_n = [Y_{(k)},Y_{(n+1-k)}]$ for $\textrm{Median}(Y)$.
\STATE (Note that $Y_{(0)}=-\infty$ and $Y_{(n+1)}=\infty$)
\end{algorithmic}
\end{algorithm}

We now show that Algorithm \ref{Median Interval} captures the median
of $P$ with probability at least $1-\alpha$.  Let
$m = \textrm{Median}(P)$, let
$M^{\textnormal{lo}} = \#\{Y_i\leq m:1\leq i\leq n\}$ be the number of
$Y_i$ at most $m$, and let
$M^{\textnormal{hi}} = \#\{Y_i\geq m:1\leq i\leq n\}$ be the number of
$Y_i$ at least $m$. Note that by the definition of $m$, we have that
for all $i$, $\mathbb{P}\{Y_i\leq m\}\geq 0.5$, and
$\mathbb{P}\{Y_i\geq m\}\geq 0.5$ as well. Additionally, the events
$\{Y_i\leq m\}$ are mutually independent for all $i$, as are the events
$\{Y_i\geq m\}$. This implies that both $M^{\textnormal{lo}}$ and
$M^{\textnormal{hi}}$ follow a $\textrm{Binom}(n,0.5)$ distribution.

Since $\hat{C}_n = [Y_{(k)}, Y_{(n+1-k)}]$, we have that
$m\in\hat{C}_n$ if and only if $M^{\textnormal{lo}}\geq k$ and
$M^{\textnormal{hi}}\geq k$. Then,
\begin{align*}
    \mathbb{P}\{m\in \hat{C}_n\}
    =& \mathbb{P}\{M^{\textnormal{lo}}\geq k\textrm{ and }M^{\textnormal{hi}}\geq k\} \\
    =& 1 - \mathbb{P}\{M^{\textnormal{lo}}< k\textrm{ or }M^{\textnormal{hi}}< k\} \\
    \geq & 1 - (\mathbb{P}\{M^{\textnormal{lo}}< k\} + \mathbb{P}\{M^{\textnormal{hi}}< k\}) \\
    \geq & 1 - (\alpha/2 + \alpha/2) \\
    = & 1 - \alpha.
\end{align*}

\end{document}